\documentclass[reqno, a4paper,12pt]{amsart}

\usepackage[capposition=bottom]{floatrow}
\usepackage{euscript,eufrak,verbatim}
\usepackage{graphicx}
\usepackage[usenames]{color}
\usepackage[colorlinks,linkcolor=red,anchorcolor=blue,citecolor=blue]{hyperref}
\usepackage{amsmath, mathtools}
\usepackage{mathtools}
\usepackage{amsthm}
\usepackage[all]{xy}
\usepackage{amssymb} 
\usepackage{bm}
\usepackage{booktabs,siunitx}

\usepackage[all]{xy}

\usepackage{mathrsfs}
\usepackage{amscd}

\usepackage{enumitem}
\usepackage[numbers]{natbib}

\usepackage{mathptmx}
\usepackage[T1]{fontenc}

\usepackage{lipsum}
\usepackage{accents}
\usepackage{titlesec}
\usepackage{calrsfs}

\makeatletter
\numberwithin{equation}{section}

\theoremstyle{cuplain}
\newtheorem{main theorem}{Main Theorem}
\newtheorem{theorem}{Theorem}[section]
\newtheorem{lemma}[theorem]{Lemma}

\newtheorem{proposition}[theorem]{Proposition}
\newtheorem{claim}[theorem]{Claim}

\newtheorem*{theorem*}{``Theorem''}
\theoremstyle{definition}
\newtheorem{definition}[theorem]{Definition}
\newtheorem{remark}[theorem]{Remark}
\newtheorem{example}[theorem]{Example}

\newtheorem{question}[theorem]{Question}
\newtheorem*{example*}{Example}
\newtheorem*{remark*}{Remark}

\newtheoremstyle{break}
{\topsep}{\topsep}%
{\itshape}{}%
{\bfseries}{}%
{\newline}{}%
\theoremstyle{break}

\newtheoremstyle{break}
{\topsep}{\topsep}%
{\normalshape}{}%
{\bfseries}{}%
{\newline}{}%

\numberwithin{equation}{section}
\newcommand{\vep}{\varepsilon}

\newcommand{\norm}[1]{\left\lVert#1\right\rVert}
\newcommand{\setcond}{\hspace{2pt} \middle| \hspace{2pt}}

\DeclareFontFamily{U}{stix2bb}{\skewchar\font127 }
\DeclareFontShape{U}{stix2bb}{m}{n} {<-> stix2-mathbb}{}
\DeclareMathAlphabet{\mathbb}{U}{stix2bb}{m}{n}
\DeclareMathAlphabet{\pazocal}{OMS}{zplm}{m}{n}

\newcommand\lbar[1]{%
\underaccent{\bar}{#1}}

\newcommand{\colA}[1]{\makebox[4.5em][c]{$#1$}}
\newcommand{\colB}[1]{\makebox[4.5em][c]{$#1$}}

\newcommand{\RN}[1]{%
\textup{\uppercase\expandafter{\romannumeral#1}}%
}

\titleformat{\section}
{\normalfont\bfseries\Large} 
{\thesection} 
{1em} 
{\bfseries} 
\titleformat{\subsection}
{\normalfont\bfseries\normalsize}
{\thesubsection} 
{1em} 
{\bfseries}

\begin{document}


\newcommand\titlelowercase[1]{\texorpdfstring{\lowercase{#1}}{#1}}

\font\mathptmx=cmr12 at 12pt



\title[\fontsize{13}{12}\mathptmx {\it{L\titlelowercase{yapunov exponents for random products of non-negative matrices}}}]{\LARGE L\titlelowercase{yapunov exponents for random products of} \protect{\\[5pt]}\titlelowercase{non-negative matrices}}


\author[\fontsize{13}{12}\mathptmx {\it{N\titlelowercase{ima} A\titlelowercase{libabaei}}}]{\fontsize{13}{12}\mathptmx N\titlelowercase{ima} A\titlelowercase{libabaei}}

\subjclass{37H15, 15B48, 28A80}

\keywords{Lyapunov exponent, random products of matrices, intersection of fractals, Hausdorff dimension, random recurrences}

\maketitle

\begin{abstract}
We first study i.i.d.\ products of finitely many invertible $2 \times 2$ matrices with positive entries, and prove that the top Lyapunov exponent admits an explicit, rapidly convergent Neumann-series-type representation involving an infinite matrix. We further show that non-negative invertible $2 \times 2$ matrices are simultaneously conjugate to positive matrices if and only if ``generalized'' heteroclinic connections do not occur among products of length at most $2$.

These results yield a series formula for the Hausdorff dimension of the intersection of the middle-$n$th Cantor set with a random translate of itself, for every natural number $n$ except $4$. Furthermore, our method applies to the intersection of thick Cantor sets under random translation. We also determine the almost sure growth rate of i.i.d.\ three-term recurrences with finitely many positive coefficients.
\end{abstract}

\section{Introduction} \label{section: introduction}

Let $\{A_i\}_{i\in I}$ be a finite family of invertible non-negative $2\times2$ matrices, and let $(w_i)_{i\in I}$ be positive weights with $\sum_{i \in I} w_i = 1$. We consider i.i.d.\ products $A_{i_n} \cdots A_{i_1}$ driven by the Bernoulli measure $\mu$ on $I^{\mathbb N}$ with weights $(w_i)_{i\in I}$. Then, by Furstenberg--Kesten \cite{Furstenberg--Kesten} the following limit exists and is constant for $\mu$-a.e.\ sequence $(i_n)_{n \in \mathbb N}$,
\[
\lambda \;=\; \lim_{n\to\infty}\frac1n \log \|A_{i_n}\cdots A_{i_1}\|,
\]
independent of the choice of norm. This $\lambda$ is called the \textbf{(top) Lyapunov exponent}. This quantity naturally arises, for example, in the context of dimension theory for the intersection of a Cantor-type fractal with a random translate of itself. Several works provide useful characterizations of $\lambda$; for instance, the Furstenberg--Kifer integral formulas \cite{Furstenberg--Kifer} or the transfer-operator approach of Pollicott \cite{Pollicott10}. However, obtaining an explicit, fast-converging expression for $\lambda$ remained challenging even for $2 \times 2$ matrices.

The goal of this paper is to give an explicit and computable formula for $\lambda$. We show that for (entrywise) \emph{positive} matrices, $\lambda$ admits the following convergent expression:
\[
\lambda \;=\; \sum_{n=0}^{\infty}\bigl(T^{n}\boldsymbol{v}\bigr)_0,
\]
$\big($where $(\cdot)_0$ denotes the $0$-th coordinate$\big)$ for a suitable infinite matrix $T$ acting on a subspace of bounded sequences and a vector $\boldsymbol{v} = (v_n)_{n=0}^\infty$.

We then identify a simple dynamical condition under which we can apply our machinery to \emph{non-negative} matrices. We say that a \emph{heteroclinic connection} exists when some matrix maps an unstable direction of one matrix onto a stable direction of another. We show that non-negative $2 \times 2$ matrices $\{A_i\}_{i \in I}$ are simultaneously conjugate to positive matrices by a linear change of basis if and only if the finite set $\{A_i\}_{i\in I}\cup\{A_iA_j\}_{i,j\in I}$ has no heteroclinic connections in a ``generalized'' sense (see Definition \ref{def: generalized depth2 heteroclinic connection}). In particular, under this condition we obtain the same series representation for $\lambda$.

Moreover, we show that this series can be approximated to within any error $\vep>0$ with $O\!\left( \left(\log(1/\vep) \right)^3 \right)$ arithmetic operations. This involves a delicate evaluation of an oscillating combinatorial sum, whose absolute convergence is not guaranteed. Our approach is to derive integral representations for the full operator $T$ and for its finite approximation $T_m$, and then to analyze the difference of the corresponding integral operators. Compared to the subexponential dependence of the transfer-operator approaches by Pollicott \cite{Pollicott10} and Jurga--Morris \cite{Jurga-Morris}, our method is asymptotically faster. See Appendix \ref{appendix: comparison} for a numerical comparison.

The paper is organized as follows. In this section, we introduce the problem and state the main results. Then we present two applications: to intersections of translated Cantor sets and to random three-term recurrence relations. Section \ref{section: definitions and basic properties} begins with a heuristic explanation of our technique and then introduces the basic definitions and lemmas. Section \ref{section: proof of main theorem 1} is devoted to the proof of Theorem \ref{thm: main theorem 1}, which establishes the explicit representation for $\lambda$. Section \ref{section: proof of main theorem 2} proves Theorem \ref{thm: main theorem 2}, in which an error bound is given and the integral representation for $T$ is introduced. Section \ref{section: proof of main theorem 3 and applications} proves the equivalence of absence of generalized heteroclinic connections and simultaneous conjugacy to positive matrices (Theorem \ref{thm: main theorem 3}), and then proves the applications. In Section \ref{section: generalizations and limitations}, we briefly discuss the higher-dimensional analogue and its limitations.

\subsection{Preliminaries} \label{subsection: preliminaries}

Furstenberg and Kifer \cite{Furstenberg--Kifer} proved that the top Lyapunov exponent $\lambda$ can be expressed as an integral with respect to stationary measures on the real projective space.

We denote by $\mathrm{GL}_2(\mathbb{R})$ the set of invertible $2 \times 2$ matrices with real entries. We also write
\begin{equation} \label{eq: definition of PGL2}
\mathrm{PGL}_2(\mathbb{R}) = \mathrm{GL}_2(\mathbb{R}) \big/ \sim,
\end{equation}
where $A \sim B$ if and only if there is $t \in \mathbb{R} \setminus \{0\}$ with $A = tB$. For a matrix $M \in \mathrm{GL}_2(\mathbb{R})$, we denote its equivalence class by $[M] \in \mathrm{PGL}_2(\mathbb{R})$.

Denote the Riemann sphere by $\widehat{\mathbb{C}} = \mathbb{C} \cup \{\widehat{\infty}\}$. We identify an element $[M] = \left[ \begin{pmatrix}
\alpha & \beta \\
\gamma & \delta
\end{pmatrix} \right] \in \mathrm{PGL}_2(\mathbb{R})$ with the M\"obius transformation $f: \widehat{\mathbb{C}} \to \widehat{\mathbb{C}}$ defined by, with the usual convention at poles and at $\widehat{\infty}$,
\[
f(x) = \frac{\alpha x + \beta}{\gamma x + \delta} \qquad \left( x \in \widehat{\mathbb{C}} \right).
\]
By abuse of notation we write $f = [M]$. Also, let $\norm{\cdot}_1$ denote the $\ell^1$-norm, the sum of the absolute values of components of a vector.

Now, consider the $1$-simplex
\[
\Delta = \left\{ v = \begin{pmatrix} v_1 \\ v_2 \end{pmatrix} \in \mathbb{R}^2 \setcond v_1, v_2 \geq 0, v_1 + v_2 = 1 \right\}.
\]
Then, an invertible non-negative $2 \times 2$ matrix $A$ induces a map $\Delta \to \Delta$ by
\[
v \mapsto \frac{A\,v \;}{ \norm{A\,v}_1} \qquad \left( v \in \Delta \right).
\]
We identify $\Delta$ and $[-1, 1]$ via the map $[-1, 1] \ni x \mapsto \frac{1}{2} \left( 1+x, \, \, 1-x \right)^{\top} \in \Delta$, where $\top$ denotes the transpose. Under this identification, the projective action above corresponds to a M\"obius transformation: define $F: \mathrm{GL}_2(\mathbb{R}) \to \mathrm{GL}_2(\mathbb{R})$ by
\begin{equation} \label{eq: definition of F}
F\left(
\begin{pmatrix}
p & q \\
r & s
\end{pmatrix}
\right) =
\frac{1}{2}
\begin{pmatrix}
p-q-r+s & p+q-r-s \\
p-q+r-s & p+q+r+s
\end{pmatrix}.
\end{equation}
Then, the action of $A$ on $[-1,1]$ is the M\"obius transformation $[F(A)] \in \mathrm{PGL}_2(\mathbb{R})$.

\smallskip
Let $\{A_i\}_{i \in I}$ be a finite family of non-negative invertible matrices and let $f_i = [F(A_i)]$ be their projective M\"obius maps for each $i \in I$. Let $(w_i)_{i\in I}$ be a probability vector, i.e.\ $\sum_{i \in I} w_i = 1$ and $w_i > 0$ for all $i \in I$. We consider the Bernoulli measure $\mu$ on $I^{\mathbb N}$ with weights $(w_i)_{i\in I}$.

A probability measure $\nu$ on $[-1,1]$ is said to be \textbf{$\boldsymbol{\mu}$-stationary for} $\boldsymbol{\{f_i\}_{i \in I}}$ if for every continuous function $g$,
\[ \int_{[-1, 1]} g \, d\nu = \sum_{i \in I} w_i \int_{[-1, 1]} \big( g \circ f_i \big) (x) \, d\nu(x). \]
Then, Furstenberg--Kifer proved the following formula. \cite[Theorem 2.2]{Furstenberg--Kifer}
\begin{align} \label{eq: integral form by Furstenberg Kifer}
\lambda
= \sup \left\{ \,
\sum_{i \in I} w_i
\int_{[-1, 1]} 
\log \norm{
\frac{1}{2} A_i
\begin{pmatrix}
1+x \\
1-x
\end{pmatrix}
}_1 \, d\nu(x)
\setcond \text{$\nu$ is $\mu$-stationary} \right\}.
\end{align}

\medskip
Concerning the Lyapunov exponents for products of invertible positive $2 \times 2$ matrices, Pollicott \cite{Pollicott10} and Jurga--Morris \cite{Jurga-Morris} developed an important line of research based on ideas of Ruelle \cite{Ruelle76}. In their approach, the Lyapunov exponent is represented via the Fredholm determinant of a suitable transfer operator, reducing the computation to finite products of matrices. The resulting computational cost is subexponential: there is $\alpha > 0$ such that to obtain truncation error at most $\vep$, the cost is
\[
O \! \left( \exp{ \left( \alpha \sqrt{ \log{ \left( 1/ \vep \right) } } \right) } \right).
\]

Our method attains the same accuracy with polynomial complexity (Theorem \ref{thm: main theorem 2}),
\[
O \! \left( \Big( \log{ \left( 1/\vep \right) } \Big)^3 \right).
\]

\subsection{Main results} \label{subsection: Main results}

Let $\ell^\infty(\mathbb{N}_0)$ be the set of bounded sequences of complex numbers, indexed by $\mathbb{N}_0 = \mathbb{N} \cup \{0\}$. For $x, c \in \mathbb{C}$ and $|x| \leq 1$, we define $v(x \, ; \, c) \in \ell^\infty(\mathbb{N}_0)$ by
\begin{equation*}
\left( v(x \, ; \, c) \right)_n =
\begin{dcases}
c & (n = 0), \\
- \frac{(-x)^n}{n} & (n \geq 1)
\end{dcases}.
\end{equation*}
Let $V \subset \ell^\infty(\mathbb{N}_0)$ be the algebraic linear span of vectors of the form $v(x \, ; \, c)$.
\[
V = \mathrm{Span}_{\mathbb{C}} \left\{ \, v(x\,;\,c) \setcond  x, c \in \mathbb{C},\; |x| \leq 1 \, \right\}.
\]

For an infinite matrix $T=(b_{k,n})_{k,n\in\mathbb N_0}$ and $u = (u_n)_{n = 0}^\infty \in \ell^\infty(\mathbb{N}_0)$, we define $Tu$ by
\[
\big( Tu \big)_k = \sum_{n = 0}^\infty b_{k,n} u_n \quad \big( k \in \mathbb N_0 \big)
\]
whenever the series converges for all $k \in \mathbb N_0$. Also, for $[M] \in \mathrm{PGL}_2(\mathbb{R})$, define its transpose by $[M]^{\top} = [M^{\top}]$ (which does not depend on the choice of $M$).

We obtain the following explicit expression for the Lyapunov exponent of invertible positive $2 \times 2$ matrices. Note that for every positive matrix $A$ we have $\frac{\det F(A)}{f_A'(0)} > 0$, where $f_A = [F(A)]$.

\begin{theorem}[Kernel expansion] \label{thm: main theorem 1}
Let $\{A_i\}_{i\in I}$ be a finite family of invertible positive $2 \times 2$ matrices, and let $\mu$ be the Bernoulli measure on $I^{\mathbb N}$ associated to a probability vector $(w_i)_{i\in I}$. Then the top Lyapunov exponent $\lambda$ admits the explicit representation
\begin{equation} \label{eq: lambda representation in main theorem}
\lambda=\sum_{n=0}^\infty \bigl(T^n \boldsymbol{v}\bigr)_0.
\end{equation}
Here, letting $f_i = [F(A_i)]$,
\[
\boldsymbol{v} = \sum_{i \in I} w_i \; v \left( f_i(0) \, ; \, \frac{1}{2} \log \left( \frac{\det F(A_i)}{f_i'(0)} \right) \right),
\]
and the linear operator $T=(b_{k,n})_{k,n\in\mathbb N_0}: V \to V$ is defined by
\begin{align*}
& b_{k,0}=0 \quad \text{for } k \geq 0, \hspace{45pt} b_{0,n}=\sum_{i\in I} w_i\, \, {\big( f_i^{\top}(0) \big)}^n \quad \text{for } n\geq1,\\
& b_{k,n}=\sum_{i\in I} w_i \sum_{\ell=1}^{\min\{k,n\}}
\binom{n}{\ell}\binom{k-1}{\ell-1} \, \, 
{\big( f_i^{\top}(0) \big)}^{n-\ell}\big(-f_i(0)\big)^{k-\ell} {\big( f_i'(0) \big)}^{\ell}\quad \text{for } k,n\geq1.
\end{align*}
\end{theorem}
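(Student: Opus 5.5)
The plan is to start from the Furstenberg--Kifer formula \eqref{eq: integral form by Furstenberg Kifer}. Since the matrices are positive, the family $\{f_i\}$ is uniformly contracting on $[-1,1]$, so the $\mu$-stationary measure $\nu$ is unique and the supremum reduces to a single integral. First I will replace the family by the transposes $\{A_i^{\top}\}_{i \in I}$. This does not change $\lambda$, because $\|A_{i_n}\cdots A_{i_1}\|=\|A_{i_1}^{\top}\cdots A_{i_n}^{\top}\|$ and the reversed word has the same distribution. It also swaps the roles of $f_i(0)$ and $f_i^{\top}(0)$, which is why both appear in the statement. I will therefore work with the transposed family and its stationary measure $\nu$, and only translate back at the end.

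\textbf{Step 1 (integrand as a power series).} Write $F(A)=\begin{pmatrix}\alpha&\beta\\ \gamma&\delta\end{pmatrix}$. A direct computation gives $\norm{\tfrac12 A(1+x,1-x)^{\top}}_1=\gamma x+\delta$. Moreover $f'(x)=\det F(A)/(\gamma x+\delta)^2$, so
\[
\log(\gamma x+\delta)=\tfrac12\log\!\big(\det F(A)/f'(0)\big)+\log(1+(\gamma/\delta)x).
\]
The first term is a constant, which becomes the $0$-th coordinate of $\boldsymbol v$. The second term expands as $-\sum_{n\ge1}(-cx)^n/n$, with $c=\gamma/\delta$ equal to $f^{\top}(0)$ for the original family and to $f(0)$ for the transposed one. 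Positivity gives $|c|<1$, so the series converges uniformly on $[-1,1]$. Hence $\lambda=\langle \boldsymbol v,m\rangle$, where $m_n=\int x^n\,d\nu$ is the moment sequence ($m_0=1$) and the pairing is $\sum_n v_n m_n$.

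\textbf{Step 2 (linear equation for moments).} The identity $f(x)-f(0)=\dfrac{f'(0)\,x}{1+f^{\top}(0)x}$ (for the relevant family) lets me expand $f(x)^k$ binomially in $f(0)$ and $f'(0)x/(1+cx)$. I then expand $(1+cx)^{-\ell}=\sum_j\binom{\ell+j-1}{j}(-cx)^j$. Stationarity $m_k=\sum_i w_i\int f_i^k\,d\nu$ then yields $m=e_0+Bm$ for an explicit infinite matrix $B$. Reindexing $n=\ell+j$ should give $B^{\top}=T$ with exactly the coefficients $b_{k,n}$ of the statement, including $b_{k,0}=0$ and the row $b_{0,n}$. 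Formally iterating gives $m=\sum_n B^n e_0$, hence
\[
\lambda=\sum_n\langle \boldsymbol v,B^n e_0\rangle=\sum_n (T^n\boldsymbol v)_0 .
\]

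\textbf{Step 3 (making it rigorous: main obstacle).} The formal interchange of the infinite sums and the convergence of the Neumann series are the hard part, since the binomial sums oscillate and are not obviously absolutely summable. The approach I would try first avoids entrywise estimates and instead computes $T$ on generators of $V$. I expect $T\,v(x;c)$ to equal $\sum_i w_i\, v(g_i(x);c_i(x))$ for suitable Möbius maps $g_i$ built from $f_i$, essentially the dual action $x\mapsto f_i^{\top}$ applied to the point $x$. This computation uses the same power-series identities as Step 2, applied to the generating function $-\sum_n (-x)^n y^n/n=\log(1+xy)$ with $|x|\le1$, where absolute convergence holds. This shows that $T$ maps $V$ to $V$ and that $(T^n\boldsymbol v)_0$ is a $\mu$-average over words of length $n$ of a telescoping cocycle increment $\log(\gamma x+\delta)$ evaluated along compositions. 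The uniform contraction of the positive maps (in the Hilbert metric, or on a complex neighbourhood of $[-1,1]$) should make these increments decay geometrically in $n$. This gives convergence of $\sum_n(T^n\boldsymbol v)_0$, and the partial sums converge to $\mathbb E\log\|A_{i_n}\cdots A_{i_1}u\|/\!\!\!\;$ normalised, i.e.\ to $\lambda$, which justifies the identity directly without appealing to the formal moment inversion.
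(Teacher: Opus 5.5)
Your Step~3 is essentially the paper's actual proof, and Steps~1--2 reproduce its heuristic from Section~2.1: compute $T$ on the generators $v(x;c)$, telescope to $\sum_{j<n}(T^j\boldsymbol v)_0=\sum_i w_i\int \ell_i\,d\nu_{n-1}$ with $\nu_n=\mathscr H^n\delta_0$, and use the hyperbolic/Wasserstein contraction of $\mathscr H$. The paper identifies the limit via Furstenberg--Kifer plus uniqueness of the stationary measure; your identification also works if read as a Ces\`aro statement, since the partial sums are the expected one-step increments $\mathbb E\log\bigl(\|A_{i_n}\cdots A_{i_1}u\|_1/\|A_{i_{n-1}}\cdots A_{i_1}u\|_1\bigr)$, whose averages tend to $\lambda$. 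One correction to your guess: the maps that appear are the original $f_i$, not $f_i^{\top}$, namely $Tv(x;c)=\sum_i w_i\bigl\{v(f_i(x);\ell_i(x))-v(f_i(0);\ell_i(0))\bigr\}$ with $\ell_i(x)=\log(\gamma x+\delta)$ built from $F(A_i)$ in your Step~1 notation; in your generating-function route this comes from $(Tv(x;c))_k=[y^k]\sum_i w_i\log\bigl(1+xf_i^{\top}(y)\bigr)$ together with the identity $\bigl(1+xf^{\top}(y)\bigr)(\beta y+\delta)=\bigl(1+f(x)y\bigr)(\gamma x+\delta)$.
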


We remark that $T(V) \subset V$ is not apparent, and we will prove this. The coefficients of $T$ are obtained by a technique we call \emph{Kernel expansion}: we expand the integrand in \eqref{eq: integral form by Furstenberg Kifer} with functions $\{1\} \cup \{\psi_n\}_{n \in \mathbb N}$ on $[-1,1]$, where $\psi_n$ are ``Kernel functions'' in the sense
\[
\int_{[-1,1]} \psi_n \, d\nu = 0
\]
for every $\mu$-stationary measure $\nu$. Subsection \ref{subsection: Heuristic explanation} explains this in detail.

The following theorem shows that the finite approximation converges exponentially fast, resulting in an algorithm whose complexity is polynomial in $\log\big(1/\vep)$ to achieve error $\leq \vep$. For the precise bounds, see Theorem \ref{thm: precise bounds}.
\begin{theorem} \label{thm: main theorem 2}
Under the assumptions of Theorem \ref{thm: main theorem 1}, for any $\vep > 0$ there are natural numbers $N$ and $M$ such that $N = O \! \left( \log{(1/\vep)} \right)$ and $M = O \! \left( \log{(N/\vep)} \right)$ with
\[
\left| \lambda - \sum_{n=0}^{N-1} \big( {T_M}^n \boldsymbol{v}^{(M)} \big)_0 \right| < \vep,
\]
where $T_M$ is the upper-left $M \times M$ submatrix of $T$, and $\boldsymbol{v}^{(M)}$ is the first $M$ components of $\boldsymbol{v}$. In particular, $\lambda$ can be approximated with $O \! \left( \left( \log{ \left( 1/\vep \right) } \right)^3 \right)$ arithmetic operations.
\end{theorem}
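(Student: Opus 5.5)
The proof splits the error into two pieces:
\[
\lambda - \sum_{n=0}^{N-1}\bigl(T_M^n\boldsymbol v^{(M)}\bigr)_0
= \underbrace{\sum_{n\ge N}\bigl(T^n\boldsymbol v\bigr)_0}_{\text{tail}}
+ \underbrace{\sum_{n=0}^{N-1}\Bigl(\bigl(T^n\boldsymbol v\bigr)_0-\bigl(T_M^n\boldsymbol v^{(M)}\bigr)_0\Bigr)}_{\text{truncation}} .
\]
The tail will be made smaller than $\vep/2$ by choosing $N$, and the truncation smaller than $\vep/2$ by choosing $M$.

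The key device is to read sequences in $V$ as functions. To $u=(u_n)$ associate $g_u(t)=u_0+\sum_{n\ge1}u_nt^n$. Then $v(x\,;\,c)$ corresponds to $c+\log(1+xt)$, which is holomorphic on the disc $|t|<1/|x|$.

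For positive $A_i$, each $f_i$ maps $[-1,1]$ into a compact subinterval of $(-1,1)$. Hence $|f_i(0)|<1$, $|f_i^\top(0)|<1$, and likewise for the relevant M\"obius data. From the construction in the proof of Theorem \ref{thm: main theorem 1}, I expect $T$ to be, on the function side, a weighted composition/transfer operator of the form $g\mapsto\sum_i w_i\,(g\circ h_i - g\circ h_i(0))$, with $h_i$ M\"obius. The binomial formula for $b_{k,n}$ is exactly the Taylor expansion of $\bigl(\text{M\"obius}\bigr)^n$.

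The first step is to derive this integral (Cauchy) representation of $T$ rigorously. Then fix radii $1<r<R$ such that every $h_i$ maps $\overline{D_r}$ into $D_{r'}$ for some $r'<r$. The goal is to show that $T$ acts boundedly on $\mathcal H_r=H^\infty(D_r)$ modulo constants and has spectral radius $<1$ there. Concretely, I aim for $\|T^n\|_{\mathcal H_r}\le C\rho^n$ with $\rho<1$.

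The contraction should come from two sources. The subtracted constant kills the $0$-th coordinate, since $b_{k,0}=0$. The maps $h_i$ strictly shrink the disc, so by Schwarz-type estimates $\|g\circ h_i-g(h_i(0))\|\le \kappa\|g\|$ with $\kappa<1$, possibly after passing to an iterate. Since $\boldsymbol v\in\mathcal H_r$, the tail is then bounded by $\|\boldsymbol v\|\,C\rho^N/(1-\rho)$. This gives $N=O(\log(1/\vep))$.

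For the truncation, write $T_M=P_MTP_M$, where $P_M$ is the Taylor truncation to degree $<M$. By Cauchy estimates $\|(I-P_M)g\|_{\mathcal H_r}\le C(r/R)^M\|g\|_{\mathcal H_R}$. Since $T$ maps $\mathcal H_r$ into $\mathcal H_R$ (composition with $h_i$ gains radius), this gives $\|T_M-T\|_{\mathcal H_r}\le C\theta^M$ with $\theta=r/R<1$.

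I expect the main obstacle to be the next step. The entries of $T_M$ are oscillating binomial sums, so powers $T_M^k$ cannot be controlled entrywise, and naive bounds $\|P_M\|^k$ grow exponentially in $k$. I would instead control them by the perturbation identity
\[
T^n-T_M^n=\sum_{j=0}^{n-1}T^{j}(T-T_M)T_M^{\,n-1-j},
\]
together with the bound $\|T_M^k\|\le C(\rho+C\theta^M)^k$. This bound follows from $\|T^k\|\le C\rho^k$ by a standard perturbation argument, using a norm adapted to $T$ (e.g.\ $\|g\|_*=\sum_{k<K}\rho^{-k}\|T^kg\|$), in which $T$ is a strict contraction. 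Once $M$ is large enough that $\rho+C\theta^M<1$, each term in the identity is $O(\theta^M)$. The truncation error is therefore $O(N\theta^M)$, the same kind of bound handles $\boldsymbol v-\boldsymbol v^{(M)}=O(\theta^M)$, and $M=O(\log(N/\vep))$ suffices. Evaluating the $0$-th coordinate is a bounded functional on $\mathcal H_r$, so these norm bounds transfer to the scalar errors.

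Finally, the cost count:
\begin{itemize}
\item The entries of $T_M$ are computed in $O(|I|M^2)$ operations, using recursions in $k,n$ for the binomial sums (e.g.\ generating $b_{k,n}$ from Taylor coefficients of powers of a M\"obius map).
\item The $N$ matrix--vector products cost $O(NM^2)$.
\end{itemize}
With $N,M=O(\log(1/\vep))$, the total is $O\bigl((\log(1/\vep))^3\bigr)$.
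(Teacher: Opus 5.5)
Your proposal is correct in outline. It takes a genuinely different route from the paper, but one misstatement has to be fixed first.

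\textbf{The form of $T$.} Expanding $f_i^{\top}(t)=f_i^{\top}(0)+\frac{f_i'(0)\,t}{1+f_i(0)\,t}$ shows $\sum_{k\ge0}b_{k,n}t^k=\sum_{i}w_i\,(f_i^{\top}(t))^n$ for $n\ge1$. So on generating functions
\[
Tg=Pg-g(0),\qquad Pg:=\sum_{i\in I}w_i\,g\circ f_i^{\top}.
\]
It is not $\sum_i w_i\bigl(g\circ h_i-g(h_i(0))\bigr)$: your form would force $(T^n\boldsymbol v)_0=0$ for all $n\ge1$, which contradicts $b_{0,n}\neq0$.

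\textbf{Where the decay comes from.} For the correct operator a single step need not contract: you only get $\|Tg\|_\infty\le\|g\|_{\mathrm{osc}}$ and $\|Tg\|_{\mathrm{osc}}\le\|g\|_{\mathrm{osc}}$. The decay must come from iterates. By induction (using $P\mathbf 1=\mathbf 1$), $T^ng=P^ng-(P^{n-1}g)(0)$, hence $\|T^ng\|_\infty\le\|P^{n-1}g\|_{\mathrm{osc}}$. This decays geometrically for two reasons:
\begin{enumerate}
\item compositions of the $f_i^{\top}$ shrink your disc $D_r$ geometrically, by Schwarz--Pick as in Lemma \ref{lemma: small image implies contraction};
\item Cauchy estimates bound $g'$ on the image disc by a multiple of $\|g\|_{\mathrm{osc}}$.
\end{enumerate}
This bound holds on all of $H^\infty(D_r)$, not just modulo constants. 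You need that, because $g\mapsto g(0)$ is not defined on the quotient.

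\textbf{The truncation term.} We have $T-P_MTP_M=(I-P_M)T+P_MT(I-P_M)$, and you only treated the first term. The second is $O((s/r)^M)$ by the same radius gain: $T(I-P_M)g$ only sees $(I-P_M)g$ on a disc $D_s$, $s<r$, containing every $f_i^{\top}(D_R)$. Also, $T_M^n\boldsymbol v^{(M)}=T_M^n\boldsymbol v$ for $n\ge1$, so no separate estimate for $\boldsymbol v-\boldsymbol v^{(M)}$ is needed. With these fixes your argument goes through.

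\textbf{Comparison with the paper.} The paper never uses a spectral gap for $T$ or $T_M$.
\begin{itemize}
\item It bounds the tail on the measure side. Partial sums equal $\sum_i w_i\int\ell_i\,d\nu_{n-1}$ with $\nu_n=\mathscr H^n\delta_0$, and $\mathscr H$ contracts $W_1$ for the hyperbolic metric.
\item It bounds the truncation through contour-integral formulas in the parameter $x$ of $v(x;c)$. These use the maps $f_i$, the dual picture to your $f_i^{\top}$. There $\mathcal L_i$ is merely non-expanding and $\mathcal R_{i,m}$ has size $r^{m-1}K(r)$ in the oscillation seminorm, combined with the telescoping Lemma \ref{lemma: telescoping of L and K}.
\item This gives a truncation error of order $(N-1)^2\rho_M$ once $(N-1)\rho_M\le1$, which is why $M=O(\log(N/\vep))$.
\end{itemize}
That route buys fully explicit constants (Theorem \ref{thm: precise bounds}), used for certified numerics, with no threshold on $M$.

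Your route treats tail and truncation together, as perturbation theory for a rank-one correction of a composition operator with a spectral gap.
\begin{itemize}
\item Via Duhamel it gives a truncation error $O\bigl(\theta^M\sum_n n\rho_2^{n-1}\bigr)$, uniform in $N$.
\item The M\"obius recurrence $(1+f_i(0)t)\,h^{n+1}=\bigl(f_i^{\top}(0)+(f_i^{\top}(0)f_i(0)+f_i'(0))t\bigr)h^n$, with $h=f_i^{\top}$, assembles $T_M$ in $O(\#I\,M^2)$ operations rather than $O(M^3)$.
\end{itemize}
The price is non-explicit constants: the spectral-gap constant, the adapted norm, and a threshold $M_0$ beyond which $T_M$ inherits the contraction.
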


The series representation extends to non-negative families, provided the associated projective maps admit no generalized heteroclinic connections (Definition \ref{def: generalized depth2 heteroclinic connection}). We show in Theorem \ref{thm: main theorem 3} that the absence of generalized heteroclinic connections is equivalent to the existence of a common strictly invariant arc under the projective actions.

\begin{definition}
Let $\widehat{\mathbb{R}} = \mathbb{R} \cup \{\widehat{\infty}\} \subset \widehat{\mathbb{C}}$. A connected compact subset of $\widehat{\mathbb{R}}$ is called a closed arc. A finite family of M\"obius transformations $\{f_i\}_{i \in I}$ is said to admit a \textbf{common strictly invariant arc} if there is a closed arc $J$ such that $f_i(J) \subset \mathring{J}$ for all $i \in I$, where $\mathring{J}$ is the relative interior of $J$ in $\widehat{\mathbb{R}}$.
\end{definition}

\begin{theorem} \label{thm: main theorem 3}
Let $\{A_i\}_{i\in I}$ be a finite family of invertible non-negative $2 \times 2$ matrices, and let $f_i = [F(A_i)]$ be the projective M\"obius map for each $i \in I$. Then, the following are equivalent.
\begin{enumerate}
\item $\{A_i\}_{i \in I}$ has no generalized heteroclinic connections of depth $2$.
\item $\{f_i\}_{i \in I}$ admits a common strictly invariant arc.
\item There is $P \in \mathrm{GL}_2(\mathbb{R})$ such that $M_i = P A_i P^{-1}$ is a positive matrix for every $i \in I$.
\end{enumerate}
\end{theorem}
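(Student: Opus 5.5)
The plan is to prove the cycle $(3)\Rightarrow(2)\Rightarrow(3)$ first, since it is essentially linear algebra, and then to connect $(1)$ with $(2)$.

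\emph{$(3)\Rightarrow(2)$.} If $M_i=PA_iP^{-1}$ is positive, then $M_i$ maps the closed positive cone $C$ into $\mathrm{int}(C)\cup\{0\}$. Hence each $A_i$ maps the cone $P^{-1}C$ strictly inside itself. Projectivizing and transporting through the identification of $\Delta$ with $[-1,1]$ (the map $F$), the image of $P^{-1}C$ is a closed arc $J$ with $f_i(J)\subset\mathring J$.

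\emph{$(2)\Rightarrow(3)$.} Given such a $J$, it is a proper arc because it is mapped strictly inside itself. Its two endpoints correspond to two independent lines in $\mathbb R^2$. Let $P$ send a basis spanning these lines to the standard basis, with signs chosen so that the cone over $J$ becomes the positive quadrant. The strict invariance then says exactly that $PA_iP^{-1}$, or its negative, maps the quadrant into its interior, i.e.\ has positive entries. The sign ambiguity can be fixed because $A_i$ is non-negative: it maps some positive vector to a positive vector, so the orientation is preserved.

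\emph{$(2)\Rightarrow(1)$.} Let $J$ be a common strictly invariant arc. Every product $g$ of the $f_i$ satisfies $g(J)\subset\mathring J$. So $g$ is hyperbolic, or a projective contraction of $J$, with its attracting fixed point in $\mathring J$ and its repelling point outside $J$. Any word $h$ maps the attracting/unstable data, which lie in $J$, into $\mathring J$, so $h$ can never hit a repelling/stable direction, which lies outside $J$. Hence there are no connections, in particular none of depth $2$. The exact meaning of ``generalized'' in Definition \ref{def: generalized depth2 heteroclinic connection} has to be checked against this. I expect it also covers parabolic or degenerate fixed points on the boundary of $[-1,1]$ for non-negative, non-positive matrices, and those cases need the same argument using $\partial J$.

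\emph{$(1)\Rightarrow(2)$.} This is the main obstacle. Each $f_i$ maps $[-1,1]$ into itself. The natural candidate arc is the smallest closed arc $K$ containing every forward image $f_w([-1,1])$ over words of length $\ge1$. Equivalently, $K$ is the hull of the attracting fixed points of the $f_i$ together with the images of these points under the maps. Since all the maps send $[-1,1]$ into itself, $K$ is an invariant subarc, $f_i(K)\subset K$. The only failure of strictness is an endpoint of $K$ being fixed or hit exactly. I would analyse the possible endpoints: $\pm1$, attracting fixed points of some $f_i$, or images $f_i(\text{endpoint})$. Showing that an endpoint is hit forces a configuration where some $f_i$ or $f_if_j$ maps an unstable direction onto a stable direction, i.e.\ a generalized heteroclinic connection of depth $\le2$; the reason depth $2$ suffices is that endpoints are determined by one extra application. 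Once the endpoint-hitting cases are excluded, $f_i(K)\subset\mathring K$ holds, up to possibly enlarging $K$ slightly. I would perform this enlargement by compactness and continuity of the finitely many maps: take $K_\varepsilon$ to be the $\varepsilon$-neighbourhood of $K$ and check that $f_i(K_\varepsilon)\subset\mathring K_\varepsilon$ for small $\varepsilon$, using that the maps contract near the endpoints in the hyperbolic case. The delicate subcases are parabolic maps and boundary points $\pm1$, which arise from zero entries. These I would handle one by one, using the generalized definition.
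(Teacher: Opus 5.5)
\paragraph{The easy directions.} Your proofs of $(3)\Rightarrow(2)$, $(2)\Rightarrow(3)$ and $(2)\Rightarrow(1)$ follow the paper's route. You work with cones where the paper conjugates via $F(M)=HMH^{-1}$. Two small repairs are needed.

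In $(2)\Rightarrow(3)$, knowing that $A_i$ maps \emph{some} positive vector to a positive vector fixes the sign only if that vector lies in the cone over $J$, and you have not shown this. The paper simply uses $\mathrm{Tr}(PA_iP^{-1})=\mathrm{Tr}(A_i)\ge 0$, which excludes an entrywise negative matrix.

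In $(2)\Rightarrow(1)$, the sets $\mathrm{Attr}_k$ and $\mathrm{Rep}_k$ contain \emph{neutral} fixed points. So you must show that every $g$ is hyperbolic, not merely ``a projective contraction''. This follows from Lemma \ref{lemma: small image implies contraction} after conjugating $J$ to $[-1,1]$.

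\paragraph{The gap in $(1)\Rightarrow(2)$.} The mechanism you sketch is false. Take
$A_1=\begin{pmatrix}2&0\\1&1\end{pmatrix}$ and $A_2=\begin{pmatrix}2&1\\1&2\end{pmatrix}$, so that $f_1(x)=\frac{x}{x+2}$ and $f_2(x)=\frac{x}{3}$.
\begin{itemize}
\item Every element of $\pazocal{F}_2$ fixes $0$ with multiplier less than $1$.
\item One computes $\mathrm{Attr}_2=\{0\}$ and $\mathrm{Rep}_2=\{-1,-5,-\tfrac53,\widehat{\infty}\}$.
\item Hence there is no generalized heteroclinic connection of depth $2$. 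Indeed $[-\tfrac12,\tfrac12]$ is a common strictly invariant arc.
\end{itemize}
Yet your arc is $K=[-1,\tfrac13]$, and its endpoint is hit: $f_1(-1)=-1$. This happens because $-1$ is a \emph{repelling} fixed point of $f_1$ on $\partial[-1,1]$, a configuration that (1) permits.

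No enlargement helps, since $f_1(-1-\varepsilon)=-\frac{1+\varepsilon}{1-\varepsilon}<-1-\varepsilon$. So ``endpoint hit $\Rightarrow$ connection'' fails, and near repellers the arc must be \emph{shrunk}, not enlarged.

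Your ``equivalently'' is also wrong. Here the hull of the attracting points and their images is $\{0\}$, not $K$. Moreover, for that hull the endpoints are always hit, so you would again need a uniform pushing argument.

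\paragraph{What is missing.} The missing ingredient is the paper's barrier-and-pushing construction.
\begin{enumerate}
\item Every $f\in\pazocal{F}_2$ is hyperbolic. Ellipticity is impossible for non-negative matrices, since the discriminant is $(p-s)^2+4qr\ge 0$. An identity, parabolic or involution element would put a neutral point in $\mathrm{Attr}_2\cap\mathrm{Rep}_2$, which is a connection with $h=\mathrm{id}$. So no case-by-case treatment of parabolic maps is needed.
\item One then shows $\mathrm{Attr}_2\subset[-1,1]$ and $\mathrm{Rep}_2\cap(-1,1)=\varnothing$.
\item Set $\alpha_1=\min\mathrm{Attr}_2$ and $\alpha_2=\max\mathrm{Attr}_2$. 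Define barriers $\beta_1\le -1$ and $\beta_2\ge 1$ as the nearest repelling fixed points or poles of elements of $\pazocal{F}_2$.
\item Using Lemma \ref{lemma: Mobius pushing lemma}, every $f\in\pazocal{F}_2$ satisfies $f(x)>x$ on $(\beta_1,\alpha_1)$ and $f(x)<x$ on $(\alpha_2,\beta_2)$.
\item Choose $A$ slightly left of $\min\{\alpha_1,\min_j f_j(\alpha_2)\}$. Then choose $B$ slightly right of $\max\{\alpha_2,\max_j f_j(A)\}$.
\end{enumerate}
The two-letter maps $f_i\circ f_j$ are exactly what give $f_if_j(\alpha_2)<\beta_2$ and $f_if_j(A)>A$. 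That is where depth $2$ genuinely enters, and your proposal offers no substitute for it.
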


Assume (3) and take $P$ so that $M_i = P A_i P^{-1}$ is a positive matrix for all $i \in I$. Notice that for any $ (i_1, \ldots, i_n) \in I^n$,
\[
M_{i_n} \cdots M_{i_1} = P A_{i_n} \cdots A_{i_1} P^{-1}.
\]
Therefore, the Lyapunov exponent of $\{A_i\}_{i \in I}$ and $\{M_i\}_{i \in I}$ coincide, and we can apply Theorem \ref{thm: main theorem 1} to $\{M_i\}_{i \in I}$, thereby obtaining the series representation for the Lyapunov exponent of $\{A_i\}_{i \in I}$.

\begin{remark}
For a non-negative matrix family with generalized heteroclinic connections of depth $2$, the formally defined partial sum in equation \eqref{eq: lambda representation in main theorem} may converge to the true value, or may oscillate and fail to converge. See Example \ref{example: matrices with generalized heteroclinic connections}.
\end{remark}

\subsection{Application \RN{1}: Intersection of translated Cantor sets}

As an application, we determine the Hausdorff dimension of the intersection of randomly translated Cantor sets for several interesting classes.

Let $\pazocal{C} \subset \mathbb{T} := \mathbb{R}/\mathbb{Z}$ be the classical middle-third Cantor set. Let $\mu$ be the Lebesgue measure on $\mathbb{T} = \mathbb{R}/\mathbb{Z}$. Hawkes proved the following result on the Hausdorff dimension of intersections of translated Cantor sets.

\begin{theorem}[{\cite[Corollary of Theorem 1]{Hawkes}}] \label{theorem: Hawkes}
We have
\begin{equation*}
\mathrm{dim}_{\mathrm{H}} \left( \pazocal{C} \cap (\pazocal{C} + t) \right) = \frac{1}{3} \frac{\log{2}}{\log{3}} \quad \text{ for $\mu$-a.e. $t \in [0, 1]$}.
\end{equation*}
\end{theorem}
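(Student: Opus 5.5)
Hawkes' theorem says that for $\mu$-a.e.\ $t$ the set $\pazocal{C} \cap (\pazocal{C}+t)$ has Hausdorff dimension $\frac{1}{3}\frac{\log 2}{\log 3}$. I would prove it by encoding $\pazocal{C} \cap (\pazocal{C}+t)$ through the ternary digits of $t$, so that the number of surviving level-$n$ intervals becomes the norm of a random product of non-negative $2\times2$ matrices. The dimension is then (Lyapunov exponent)$/\log 3$, and for the middle-third set the exponent can be computed by hand.

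\textbf{Step 1: interval counting.} Write $t=\sum_{k\ge1} t_k 3^{-k}$ with $t_k\in\{0,1,2\}$ i.i.d.\ uniform under Lebesgue measure, with the addition taken in $\mathbb{T}$. The level-$n$ intervals of $\pazocal{C}$ are triadic intervals of length $3^{-n}$ with digits in $\{0,2\}$. The intersection of $\pazocal{C}$ with a translate by a truncated $t$ is a union of such intervals, and truncating $t$ changes the translate by less than $3^{-n}$. Comparing $\pazocal{C}_n \cap (\pazocal{C}_n + t)$ with this union shows that, up to a bounded factor, the count of surviving level-$n$ intervals is $u^\top A_{t_n}\cdots A_{t_1}u'$ for suitable positive vectors $u,u'$.

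Here the two states record whether the carry into the next digit position is $0$ or $1$, reading digits from the least significant end in the usual carry-automaton fashion. The entry of $A_d$ for a transition counts the pairs $(a,b)\in\{0,2\}^2$ with $a\equiv b+d+\text{carry}\pmod 3$ producing the outgoing carry. Working this out gives the matrices
\[
A_0=\begin{pmatrix}2&0\\1&1\end{pmatrix},\quad A_1=\begin{pmatrix}1&1\\0&2\end{pmatrix},\quad A_2=\begin{pmatrix}1&1\\1&1\end{pmatrix},
\]
up to transposition and relabeling. These are non-negative, but $A_2$ is singular, which needs care.

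\textbf{Step 2: counts to dimension.} With $N_n(t)$ the interval count, Furstenberg--Kesten gives $\frac1n\log N_n(t)\to\lambda$ almost surely. This also holds with a singular factor, since $A_2$ has rank one and simply resets the product. The upper bound $\dim_{\mathrm H}\le \lambda/\log 3$ follows from the covering by level-$n$ intervals. The lower bound follows by placing on the intersection the natural Markov/branching measure and applying the mass distribution principle. This is the standard argument for random recursive constructions, using that the number of surviving subintervals has controlled variance.

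\textbf{Step 3: computing $\lambda$, the main obstacle.} The product of $A_0,A_1$ blocks between consecutive occurrences of $A_2$ behaves well because $A_2$ projects onto $(1,1)^\top$. So $\lambda$ equals the expected log-growth of $\|B_{t_m}\cdots B_{t_1}(1,1)^\top\|$ per digit, where $m$ is a geometric time with $P(t_k=2)=1/3$.

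For $A_0$ and $A_1$ acting on $(1,1)^\top$-type vectors, the count between resets is explicit. A run of $j$ non-$2$ digits followed by the $2$ multiplies the total by exactly $2^{\,\#\{\text{equal-digit steps}\}}$, which has expectation handled by the renewal-reward theorem. Averaging gives $\lambda=\frac13\log 2$, hence the dimension $\frac{\lambda}{\log 3}=\frac13\frac{\log 2}{\log 3}$.

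I expect the exact bookkeeping of the carry automaton to be the fiddly part: the orientation of the states and whether the reward is $\log 2$ per non-$2$ digit or per $2$. The value $\frac13\log2$ pins down that on average one in three digits contributes a factor $2$, and I would verify the automaton against this.
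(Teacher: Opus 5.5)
The paper gives no proof of this statement; it quotes it from Hawkes. So your argument has to stand on its own, and it breaks at Step~1.

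\textbf{The gap.} The matrices you write down are not the ones your own carry rule produces. Every one of them has both row sums equal to $2$, so $A_d\mathbf{1}=2\mathbf{1}$ for each $d$ and $\|A_{t_n}\cdots A_{t_1}\|_\infty=2^n$ exactly. Transposition or relabeling does not change this. With these matrices $\lambda=\log 2$, and Step~2 would give $\dim_{\mathrm H}=\log 2/\log 3$, the dimension of $\pazocal{C}$ itself.

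Step~3 is therefore not a computation. The renewal picture, with a singular reset and a factor $2^{\#\{\text{equal-digit steps}\}}$ per run, does not describe these products. Your final sentence, which adjusts the automaton until it reproduces $\frac13\log 2$, makes the argument circular.

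Your lower bound in Step~2 also targets the wrong model. For fixed $t$ nothing is random, and all surviving intervals sharing a carry sequence branch identically. So a ``controlled variance'' argument for random recursive constructions does not apply.

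\textbf{The repair.} Write $x=\sum a_k3^{-k}\in\pazocal{C}$, $x-t=\sum b_k3^{-k}\in\pazocal{C}$, and $b_k+t_k+c_k=a_k+3c_{k-1}$ with $a_k,b_k\in\{0,2\}$. Applying this rule correctly gives $A_0=\mathrm{diag}(2,1)$, $A_1=\begin{pmatrix}0&1\\1&0\end{pmatrix}$, $A_2=\mathrm{diag}(1,2)$. These are exactly the matrices of Theorem~\ref{theorem: KP main result} for $b=3$, $D_1=D_2=\{0,2\}$. They are invertible and monomial, so no reset occurs.

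The outgoing carry is $c_{k-1}=c_k\oplus\mathbf{1}\{t_k=1\}$. An infinite expansion has no least significant end, but this carry map is a bijection. Hence for fixed $t$ there are exactly two admissible carry sequences, $c_k=c_0\oplus\bigl(\#\{j\le k: t_j=1\}\bmod 2\bigr)$, with $c_0\in\{0,1\}$ free in $\mathbb{T}$.

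Along each carry sequence, position $k$ allows both values $a_k\in\{0,2\}$ iff $(t_k,c_k)\in\{(0,0),(2,1)\}$, and exactly one value otherwise. Since $c_k=c_{k-1}$ when $t_k\neq 1$, this event is $\{t_k=0,\,c_{k-1}=0\}\cup\{t_k=2,\,c_{k-1}=1\}$. Here $c_{k-1}$ depends only on $t_1,\dots,t_{k-1}$, so the event has conditional probability exactly $\frac13$ given the past. The doubling indicators are therefore i.i.d.\ Bernoulli$(\frac13)$. The strong law gives $\frac1n\log N_n^{(c_0)}\to\frac13\log 2$ almost surely, i.e.\ $\lambda=\frac13\log 2$.

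For the dimension you can then invoke Theorem~\ref{theorem: KP main result}. Alternatively, argue directly: $\pazocal{C}\cap(\pazocal{C}+t)=E_0\cup E_1$, where $E_{c_0}=\{\sum_k a_k3^{-k}: a_k\in S_k^{(c_0)}\}$ for explicit nonempty $S_k^{(c_0)}\subset\{0,2\}$. The uniform product measure with the mass distribution principle, together with the obvious covering, gives $\dim_{\mathrm H}E_{c_0}=\liminf_n\frac{\log N_n^{(c_0)}}{n\log 3}=\frac{\log 2}{3\log 3}$.
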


Kenyon and Peres later tackled a general class of $b$-adic Cantor sets defined by digit sets: let $b \geq 2$ be an integer, and for $D_j \subset \{0, 1, \ldots, b-1\}$ ($j = 1, 2$), define
\begin{equation*}
K_j = \left\{ \sum_{n = 1}^\infty \frac{d_n}{b^n} \setcond d_n \in D_j \right\} \subset \mathbb{T} = \mathbb{R}/\mathbb{Z}.
\end{equation*}
Then, the object of interest is $\mathrm{dim}_{\mathrm{H}} \left( (K_1 + t) \cap K_2 \right)$. Hawkes' method does not apply in full generality. Roughly speaking, it uses the fact that $D_2 - D_1$ is contained in some arithmetic progression of length $b$. When this holds, we say that $(D_1, D_2)$ satisfies the \textbf{difference set condition}. This is expanded in more detail in \cite[Section 3]{Kenyon--Peres: translated Cantor sets}.

Kenyon and Peres proved that, in general, the dimension in question can be expressed using the Lyapunov exponent of random products of non-negative matrices.
\begin{theorem}[{\cite[Theorem 1.2]{Kenyon--Peres: translated Cantor sets}}] \label{theorem: KP main result}
Define the $2 \times 2$ matrices $A_0, \ldots, A_{b-1}$ by
\begin{equation*}
A_i(j, k) = \# \big( \left( D_1 +i+j \right) \cap \left(D_2 + kb \right) \big) \quad \left( \text{ $0 \leq j, k \leq 1$ } \right),
\end{equation*}
where $\#$ is the number of elements. Let $\lambda$ be the Lyapunov exponent of $\{A_i\}_{i=0}^{b-1}$ with uniform weight. Then,
\begin{equation*}
\mathrm{dim}_{\mathrm{H}} \left( (K_1 + t) \cap K_2 \right)
= \frac{\lambda}{\log{b}} \quad \left( \text{$\mu$-a.e. $t$} \right).
\end{equation*}
If there is $n$ such that $A_{t_n} \cdots A_{t_1} = 0$ for $\mu$-a.e.\! $t = \sum_{k = 1}^\infty t_k b^{-k}$, then the dimension is $0$.
\end{theorem}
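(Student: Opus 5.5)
This is the cited result of Kenyon--Peres. To prove it, I would encode the intersection $(K_1+t)\cap K_2$ as a tree driven by the digits of $t$, and then compute its dimension from the growth rate of that tree. Write $t=\sum_k t_k b^{-k}$; under Lebesgue measure the digits $t_k$ are i.i.d.\ uniform on $\{0,\dots,b-1\}$. A point of $K_1+t$ has the form $\sum_k (d_k+t_k)b^{-k}$ with $d_k\in D_1$. To rewrite it in base $b$ one needs carries $c_k\in\{0,1\}$, since $d_k+t_k+c_k\le 2b-1$. For the point to lie in $K_2$ one needs $d_k+t_k+c_k = e_k + c_{k-1}b$ with $e_k\in D_2$, where $c_{k-1}$ is the carry passed to the more significant digit.

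For a fixed pair of carries $(j,k)=(c_k,c_{k-1})$, the number of admissible digits $d_k$ is exactly $A_{t_k}(j,k)=\#\big((D_1+t_k+j)\cap(D_2+kb)\big)$. Hence the number of admissible length-$n$ digit strings, with prescribed outer carries, is an entry of the product $A_{t_1}A_{t_2}\cdots A_{t_n}$. Equivalently, up to transposition, it is an entry of $A_{t_n}^{\top}\cdots A_{t_1}^{\top}$. The number $N_n(t)$ of level-$n$ $b$-adic intervals meeting the intersection is therefore comparable, up to a bounded factor, to $\|A_{t_1}\cdots A_{t_n}\|$. The bounded factor accounts for the two carry states and for the two representations of $b$-adic rationals, which form a null set of $t$.

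The products $A_{t_1}\cdots A_{t_n}$ run in the reversed order compared with the i.i.d.\ products in Furstenberg--Kesten. Their norms nevertheless have the same almost sure exponential growth rate. To see this, note first that for each fixed $n$ the reversed string $(t_n,\dots,t_1)$ has the same law as $(t_1,\dots,t_n)$. Second, Kingman's subadditive theorem applies equally to the reversed cocycle, and its limit is the almost sure constant $\lambda$. Consequently $\frac1n\log N_n(t)\to\lambda$ a.s. The covering of the intersection by these $N_n(t)$ intervals of length $b^{-n}$ then gives the upper bound $\dim_{\mathrm H}\le\overline{\dim}_B\le \lambda/\log b$. If some product vanishes almost surely, then $N_n(t)=0$ eventually and the intersection is empty, which gives dimension $0$.

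The lower bound is the main obstacle, because growth of the counts alone does not give Hausdorff dimension. I would view the admissible strings as a tree $\mathcal T_t$ whose level-$n$ vertices are admissible prefixes together with carry states; this tree is a tree in a stationary ergodic random environment (the shift on $t$). By Furstenberg's theorem, $\dim_{\mathrm H}$ of the corresponding set equals $\log \mathrm{br}(\mathcal T_t)/\log b$. So it suffices to show that the branching number equals the growth rate $e^{\lambda}$ almost surely.

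For this I would construct a flow, or Frostman measure, on $\mathcal T_t$. The mass of a vertex at level $n$ would be proportional to a row vector of $A_{t_1}\cdots A_{t_n}$ paired with a suitable limiting direction of the tail products. Such a direction exists a.s.\ because the tail products are i.i.d.\ products with a top exponent. I would then verify that the mass of a level-$n$ cylinder is at most $e^{-n(\lambda-\vep)}$ for all large $n$, a.s. This is where the ergodic theorem and Oseledets/Furstenberg--Kesten applied along the tail would be used.

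The delicate points are twofold. First, zero rows or columns of the matrices must be handled, which I would do by restricting to carry states that are visited with positive probability. Second, the cylinder estimate must hold uniformly over all vertices at a level, not just along a typical path; I would address this with a Borel--Cantelli argument over cylinders together with the subadditivity of $\log\|\cdot\|$.
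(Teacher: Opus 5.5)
The paper gives no proof of this statement; it is quoted from Kenyon--Peres, so your sketch has to stand on its own.

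\textbf{Upper bound.} This part is essentially fine, up to two slips. First, in $A_i(j,k)$ the index $j$ is the carry \emph{entering} the digit and $k$ the carry \emph{leaving} it. Chaining digits $1,\dots,n$ therefore gives $\prod_k A_{t_k}(c_k,c_{k-1})=(A_{t_n}\cdots A_{t_1})(c_n,c_0)$. This is already the Furstenberg--Kesten order, so your reversal/Kingman detour is correct but unnecessary. Second, $\mathbf 1^{\top}A_{t_n}\cdots A_{t_1}\mathbf 1$ counts all admissible prefixes, including those whose subtree dies (zero columns). It is only an upper bound for the number of level-$n$ intervals meeting the intersection, not a comparable quantity. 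That upper bound is all the covering argument uses.

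\textbf{Lower bound: the gap.} Make your flow explicit. Let $Z_n$ be the $\ell^1$-normalized limit of $\mathbf 1^{\top}A_{t_N}\cdots A_{t_{n+1}}$ as $N\to\infty$. For a vertex $v$ at level $n$ with carry $c_n$, put $\theta(v)=Z_n(c_n)/\big(Z_n^{\top}A_{t_n}\cdots A_{t_1}\mathbf 1\big)$. This is a unit flow, and it depends only on $n$ and $c_n$. So the uniformity over vertices that you plan to get from Borel--Cantelli is a non-issue: $\theta(v)\le 1/R_n$, where $R_n=\prod_{k=1}^{n}\|Z_k^{\top}A_{t_k}\|_1$ is a Birkhoff product. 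The whole lower bound thus reduces to two claims:
(i) $Z_n$ exists almost surely;
(ii) $\mathbb{E}\log\|Z_1^{\top}A_{t_1}\|_1=\lambda$, with enough integrability.

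Neither follows from ``the tail products are i.i.d.\ with a top exponent.'' Projective convergence of non-negative products needs some contraction. For example, for diagonal products with equal diagonal exponents, the log-ratio of the two coordinates of $\mathbf 1^{\top}A_{t_N}\cdots$ is a non-degenerate mean-zero random walk, so the direction has no limit. For reducible (triangular) or singular families, the limit can sit on a boundary ray or have an unbounded coordinate ratio. In those cases you must check that it carries the top exponent rather than a smaller one.

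For positive matrices, Birkhoff--Hopf contraction gives (i) and (ii) readily. But the digit-set matrices are very often singular or triangular, and the statement covers all pairs $(D_1,D_2)$. You also cannot conjugate to positive matrices as in Theorem~\ref{thm: main theorem 3}, because the tree is built from the actual integer entries. So you still need one of two things:
\begin{enumerate}
\item a case analysis establishing (i)--(ii) for these specific matrices; or
\item a lower-bound mechanism that avoids limiting directions, such as a Furstenberg-type argument that Hausdorff and lower Minkowski dimension coincide for trees whose subtrees are governed by the shifted stationary environment.
\end{enumerate}
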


\begin{definition} \label{def: degenerate pair}
The pair $(D_1, D_2)$ is said to be \textbf{degenerate} if and only if there is an index $0 \leq i \leq b-1$ such that $A_i$ is singular.
\end{definition}

When degenerate, either the dimension is $0$, or the Lyapunov exponent $\lambda$ admits a closed form. See \cite{Fan--Vebitskiy} for details.

When every $A_i$ is invertible, we define the set of M\"obius maps $\pazocal{M}(D_1, D_2) \subset \mathrm{PGL}_2(\mathbb{R})$ by $\pazocal{M}(D_1, D_2) = \{[F(A_i)]\}_{i = 0}^{b-1}$. Consider the following ``one-digit forbidden family''. For every $b \geq 5$, we can prove that such a pair is either degenerate, or does not have generalized heteroclinic connections of depth $2$.

\begin{proposition} \label{prop: toothless twins}
Let $b \geq 5$. Consider $\tau, u \in \{0, 1, \ldots, b-1\}$ and let
\begin{align*}
& D_1 = \{0, 1, \ldots, b-1\} \setminus \{ \tau \}, \\
& D_2 = \{0, 1, \ldots, b-1\} \setminus \{u\}.
\end{align*}
Then, the pair $(D_1, D_2)$ is non-degenerate if and only if both $\{\tau, u\} \cap \{0, b-1\} = \varnothing$ and $\tau + u = b-1$. When non-degenerate, the family $\pazocal{M}(D_1, D_2) \subset \mathrm{PGL}_2(\mathbb{R})$ does not have generalized heteroclinic connections of depth $2$; thus we have for $\mu$-a.e.\! $t$,
\[
\mathrm{dim}_{\mathrm{H}} \left( (K_1 + t) \cap K_2 \right)
= \frac{1}{\log b} \sum_{n = 0}^\infty \big( T^n \boldsymbol{v} \big)_0.
\]
Here, the matrices $\{A_i\}_{i = 0}^{b-1}$ are conjugate to some positive matrices by Theorem \ref{thm: main theorem 3}, and an infinite matrix $T$ and a vector $\boldsymbol{v}$ are obtained from applying Theorem \ref{thm: main theorem 1} to the positive matrices.
\end{proposition}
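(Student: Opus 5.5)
The plan has three parts: compute the matrices $A_i$ explicitly, read off the degeneracy criterion from their determinants, and in the non-degenerate case verify condition (2) of Theorem \ref{thm: main theorem 3} instead of checking condition (1) directly. Once condition (2) holds, Theorem \ref{thm: main theorem 3} gives a $P$ with $PA_iP^{-1}$ positive. Since the Lyapunov exponent is conjugation invariant, Theorem \ref{thm: main theorem 1} and Theorem \ref{theorem: KP main result} then give the dimension formula.

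\textbf{Step 1: explicit entries.} Write $s=i+j\in\{0,\dots,b\}$. Then $A_i(j,0)$ counts the points of $[s,s+b-1]\setminus\{s+\tau\}$ lying in $[0,b-1]\setminus\{u\}$. This is $b-s$, minus one if $s+\tau\le b-1$, minus one if $u\ge s$, plus one if both removed points coincide ($s+\tau=u$). Likewise $A_i(j,1)$ is $s$, corrected by whether $s+\tau\ge b$, whether $u\le s-1$, and whether $s+\tau=b+u$. Tabulating these for $i=0$, for $1\le i\le b-2$, and for $i=b-1$ gives every entry in closed form as a function of $(i,\tau,u)$.

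\textbf{Step 2: degeneracy.} I would compute $\det A_i$ and show the following.
\begin{itemize}
\item If $\tau\in\{0,b-1\}$ or $u\in\{0,b-1\}$, then some boundary matrix has a zero row or column. For example, $A_0(0,1)=0$ always, and $A_0(1,1)$ equals the indicator of $\tau\ne b-1$ and $u\ne 0$.
\item If $\tau+u\ne b-1$, then for the index $i$ with $i+\tau \equiv u$ (or $u+b$) the coincidence term produces proportional rows, so some $A_i$ is singular.
\item Conversely, under $\{\tau,u\}\cap\{0,b-1\}=\varnothing$ and $\tau+u=b-1$, all determinants are nonzero.
\end{itemize}
This is a finite case check organized by the position of $s$ relative to $u-\tau$ and $u+b-\tau$.

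\textbf{Step 3: a common strictly invariant arc.} Assume the pair is non-degenerate. I expect the following, using $b\ge 5$ so that the interval counts like $b-s-2$ or $s-2$ stay positive for $1\le i\le b-2$:
\begin{itemize}
\item each $A_i$ with $1\le i\le b-2$ is entrywise positive, so $f_i([-1,1])$ is a compact subset of $(-1,1)$;
\item $A_0=\begin{pmatrix} a&0\\ c&d\end{pmatrix}$ with $c>0$;
\item $A_{b-1}=\begin{pmatrix} a'&b'\\ 0&d'\end{pmatrix}$ with $b'>0$.
\end{itemize}
Thus $f_0$ fixes the endpoint $-1$ (the $e_2$ direction) and $f_{b-1}$ fixes $+1$ (the $e_1$ direction). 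From Step 1, e.g.\ $a=A_0(0,0)\in\{b-2,b-1\}$ and $d=A_0(1,1)=1$, so the eigenvalue on the fixed boundary direction is the smaller one. Hence $-1$ is repelling for $f_0$, and symmetrically $+1$ is repelling for $f_{b-1}$. The same symmetry $\tau\leftrightarrow u$, $i\leftrightarrow b-1-i$ should cut the casework in half.

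Take $J=[-1+\eta,1-\eta]$ with $\eta>0$ small. The maps $f_0$ and $f_{b-1}$ are increasing Möbius maps of $[-1,1]$ into itself. Each has its fixed boundary point repelling and its other fixed point interior and attracting. Therefore $f_0(J)\subset\mathring J$ and $f_{b-1}(J)\subset\mathring J$ for all small $\eta$. For the positive middle matrices, $f_i(J)\subset f_i([-1,1])\subset\mathring J$ once $\eta$ is small. This gives (2) of Theorem \ref{thm: main theorem 3}, hence (1), i.e.\ no generalized heteroclinic connections of depth $2$, and (3). The dimension formula then follows as described above.

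The main obstacle is Step 2, in particular showing that $\tau+u=b-1$ is exactly the non-singularity condition, together with confirming strict positivity of the middle matrices. That is where the careful bookkeeping of the coincidence terms, and the assumption $b\ge5$, enter.
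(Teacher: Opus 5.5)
Step~3 rests on a false claim: the middle matrices $A_i$, $1\le i\le b-2$, are not always entrywise positive. Whenever $\tau\in\{1,b-2\}$, two of them have a zero entry.

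\begin{itemize}
\item For $\tau=b-2$, $u=1$ one gets $A_1=\begin{pmatrix} b-3&1\\ b-2&0\end{pmatrix}$ and $A_2=\begin{pmatrix} b-2&0\\ b-3&2\end{pmatrix}$. For $b=5$, $D_1=\{0,1,2,4\}$, $D_2=\{0,2,3,4\}$ these are $\begin{pmatrix}2&1\\3&0\end{pmatrix}$ and $\begin{pmatrix}3&0\\2&2\end{pmatrix}$, and this pair is non-degenerate.
\item Symmetrically, for $\tau=1$ one gets $A_{b-3}=\begin{pmatrix}2&b-3\\0&b-2\end{pmatrix}$ and $A_{b-2}=\begin{pmatrix}0&b-2\\1&b-3\end{pmatrix}$.
\end{itemize}

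For these matrices $f_i([-1,1])$ meets $\{\pm1\}$, so the inclusion $f_i(J)\subset f_i([-1,1])\subset\mathring J$ you rely on is unavailable. This is not cosmetic; it is exactly where $b\ge5$ is needed. The triangular one fixes an endpoint with multiplier $\frac{b-2}{2}$, which exceeds $1$ only when $b\ge5$. For $b=4$, $\tau=2$, $u=1$ it becomes the parabolic $A_2=\begin{pmatrix}2&0\\1&2\end{pmatrix}$ of the paper's $b=4$ example. There the conclusion fails even though the pair is non-degenerate. So an argument in which $b\ge5$ only serves to ``keep the counts positive'' cannot be complete.

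To close the gap, treat the exceptional maps by hand (take $\tau=b-2$; $\tau=1$ is symmetric).

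\begin{itemize}
\item The map $f_2$ behaves like $f_0$. The point $-1$ is fixed with $f_2'(-1)=\frac{b-2}{2}>1$. The other fixed point, the direction $(b-4,b-3)^{\top}$, is interior and attracting.
\item The map $f_1(x)=\frac{-x}{(b-3)x+(b-2)}$ is decreasing with no pole on $[-1,1]$. It satisfies $f_1(1)=-\frac{1}{2b-5}$ and $f_1(-1+\eta)=\frac{1-\eta}{1+(b-3)\eta}<1-\eta$, so $f_1(J)\subset\mathring J$ for small $\eta$.
\end{itemize}

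With these checks, $[-1+\eta,1-\eta]$ is a common strictly invariant arc and your route works. It differs from the paper's proof, which passes to the transposed family. There all $g_i=[F(A_i^{\top})]$ are affine of slope $\pm\frac{1}{b-2}$ except two, which satisfy $|g_i'|\le\frac{2}{b-2}\le\frac23$. Hence every word of length at most $2$ is a Euclidean contraction of $[-1,1]$, and zero entries need no special treatment.

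Separately, Step~2 mislocates the singular matrix. Suppose $\{\tau,u\}\cap\{0,b-1\}=\varnothing$ but $\tau+u\ne b-1$. Then it is $A_{b-1-\tau}$ (and likewise $A_u$) that has two equal rows. The reason is that between $s=i$ and $s=i+1$ only one of the two missing-digit indicators switches. The coincidence index is not responsible: for $b=5$, $\tau=u=1$, the matrix $A_0=\begin{pmatrix}4&0\\2&1\end{pmatrix}$ is invertible, while $A_1=\begin{pmatrix}2&1\\2&1\end{pmatrix}$ and $A_3=\begin{pmatrix}1&2\\1&2\end{pmatrix}$ are singular. The paper sidesteps this part by quoting the criterion from the author's earlier work.
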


\begin{example} \label{example: middle fifth}
Let $b = 5$, and $D_1 = D_2 = \{0, 1, 3, 4\}$. So, we are considering the middle-fifth Cantor sets. Then, the matrices $A_0, \ldots, A_4$ are, respectively,
\[
\begin{pmatrix}
4 & 0\\
2 & 1
\end{pmatrix}
, \hspace{2pt}
\begin{pmatrix}
2 & 1\\
1 & 2
\end{pmatrix}
, \hspace{2pt}
\begin{pmatrix}
1 & 2\\
2 & 1
\end{pmatrix}
, \hspace{2pt}
\begin{pmatrix}
2 & 1\\
1 & 2
\end{pmatrix}
, \hspace{2pt}
\begin{pmatrix}
1 & 2\\
0 & 4
\end{pmatrix}.
\]
Let $f_i = [F(A_i)]$ for each $0 \leq i \leq 4$. Then,
\[
f_0(x) = \frac{3x+1}{5x+7}, \hspace{10pt}
f_1(x) = \frac{x}{3}, \hspace{10pt}
f_2(x) = \frac{-x}{3}, \hspace{10pt}
f_3(x) = \frac{x}{3}, \hspace{10pt}
f_4(x) = \frac{3x-1}{-5x+7}.
\]

\smallskip
Using our method, the rigorously certified value of $\lambda$ is
\begin{align*}
\lambda = \sum_{n = 0}^\infty \big( T^n \boldsymbol{v} \big)_0 = 1.159357955327188283472158428142891438639948104124240658 \cdots.
\end{align*}
Therefore, for Lebesgue a.e.\ $t$,
\begin{equation*}
\mathrm{dim}_{\mathrm{H}} \left( (K_1 + t) \cap K_2 \right) = \frac{\lambda}{\log{5}} =
0.72034959930438388515519106200202201292606347 \cdots.
\end{equation*}
For this example, we included a detailed comparison with the transfer-operator method by Pollicott \cite{Pollicott10} and Jurga--Morris \cite{Jurga-Morris} in Appendix \ref{appendix: comparison}.
\end{example}

\begin{example}
Let $b = 4$ and $D_1 = \{0, 1, 3\}, D_2 = \{0, 2, 3\}$. Then, $(D_1, D_2)$ is non-degenerate. However, the fixed point equation for $[F(A_2)]$ is $(x+1)^2 = 0$, implying the existence of generalized heteroclinic connections of depth $2$ (as $[F(A_2)]$ is parabolic). The same conclusion holds for $D_1 = \{0, 2, 3\},$ $D_2 = \{0, 1, 3\}$. Since these examples do not satisfy the difference set condition, they are the only pairs in the one-digit forbidden family that are still beyond our reach.
\end{example}

One can ask the following natural question.
\begin{question} \label{question: classification for the intersection of translated cantor sets}
When is the pair $(D_1, D_2)$ degenerate? If non-degenerate, when does it have a generalized heteroclinic connection of depth $2$?
\end{question}
Appendix \ref{appendix: fractals census} contains a census we performed regarding this question. For ``thick'' Cantor sets, in particular if at least about $80\%$ of the digits are allowed for each digit set, then we can prove that the pair is either degenerate or admits the series representation, provided $b$ is large enough.

\begin{proposition} \label{prop: thick Cantor sets are Kernel expandable}
Let $b \geq 3$. Let $\rho(b) = \lfloor \frac{b-3}{5} \rfloor$, and suppose $D_1, D_2 \subset \{0, 1, \ldots, b-1\}$ satisfy $\#D_j \geq b - \rho(b)$ for $j = 1,2$. Then, the pair $(D_1, D_2)$ is either degenerate, or else, the associated M\"obius maps $\pazocal{M}(D_1, D_2)$ admit a common strictly invariant arc. If non-degenerate, we have for $\mu$-a.e.\! $t$,
\[
\mathrm{dim}_{\mathrm{H}} \left( (K_1 + t) \cap K_2 \right)
= \frac{1}{\log b} \sum_{n = 0}^\infty \big( T^n \boldsymbol{v} \big)_0.
\]
Here, the matrices $\{A_i\}_{i = 0}^{b-1}$ are conjugate to some positive matrices by Theorem \ref{thm: main theorem 3}, and an infinite matrix $T$ and a vector $\boldsymbol{v}$ are obtained from applying Theorem \ref{thm: main theorem 1} to the positive matrices.
\end{proposition}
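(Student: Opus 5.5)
The plan is to verify condition (2) of Theorem \ref{thm: main theorem 3} directly for every non-degenerate pair: exhibit an explicit cone $K\subset\mathbb{R}^2$ with $A_iK\subset \mathring K\cup\{0\}$ for all $i$. Its projectivization is then a closed arc $J$ with $[A_i](J)\subset\mathring J$, and transporting $J$ by the identification of $\Delta$ with $[-1,1]$ gives an arc for the $f_i$. The implication (2)$\Rightarrow$(3) of Theorem \ref{thm: main theorem 3} then conjugates the family to positive matrices. Theorem \ref{thm: main theorem 1} gives the series for their Lyapunov exponent, which equals that of $\{A_i\}$, and Theorem \ref{theorem: KP main result} converts it into the dimension formula. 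So everything reduces to a matrix estimate.

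\emph{Step 1: the unperturbed model.} For full digit sets $D_1=D_2=\{0,\dots,b-1\}$ a direct count gives
\[
B_i=\begin{pmatrix} b-i & i\\ b-i-1 & i+1\end{pmatrix}.
\]
Every $B_i$ has right eigenvector $(1,1)^{\top}$ with eigenvalue $b$ and left eigenvector $(1,-1)$ with eigenvalue $1$. So in the coordinates $\sigma(v)=v_1+v_2$ (up to a correction) and $s(v)=v_1-v_2$, each $B_i$ multiplies the $(1,1)$-component by $b$ and preserves $s$. Concretely, I will write $v=\alpha(1,1)^{\top}+s\,u_i$ with $u_i$ the $1$-eigenvector of $B_i$. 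I will take the $i$-independent cone
\[
K_\theta=\bigl\{v:\ |v_1-v_2|\le \theta\,(v_1+v_2)\bigr\}
\]
and check that $B_iK_\theta$ lies inside $K_{\theta'}$ with $\theta'\approx \theta\cdot O(1/b)+O(1/b)$. The eigenvalue gap of size $b$ gives plenty of room, and I expect a choice such as $\theta=1-\eta$, slightly less than $1$, to work uniformly in $0\le i\le b-1$. The uniformity is the delicate point, because $u_i$ moves with $i$ and the extreme indices $i=0,b-1$ are where $B_i$ has a zero-ish entry.

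\emph{Step 2: perturbation by missing digits.} Write $A_i=B_i-E_i$. Each digit removed from $D_1$ or $D_2$ lowers the count $\#\bigl((D_1+i+j)\cap(D_2+kb)\bigr)$ by at most one in each row, and in total by at most one per row across $k=0,1$. Hence $E_i\ge0$ entrywise and each row sum of $E_i$ is at most $2\rho(b)$. For $v\in K_\theta$ I will bound $|s(A_iv)|\le |s(B_iv)|+2\rho\,\|v\|_1$ and $\sigma(A_iv)\ge \sigma(B_iv)-4\rho\|v\|_1$. With $\sigma(B_iv)\gtrsim b\,\sigma(v)$ and $\|v\|_1\le \sigma(v)$ on $K_\theta$, the target inequality $|s(A_iv)|<\theta\,\sigma(A_iv)$ becomes a linear inequality in $b$ and $\rho$. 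I expect it to hold exactly when $5\rho+3\le b$, i.e.\ $\rho\le (b-3)/5$, which would explain the constant $\rho(b)=\lfloor (b-3)/5\rfloor$. I will tune $\theta$ to make this threshold come out.

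\emph{Step 3: degeneracy and strictness.} Non-degeneracy means every $A_i$ is invertible, so $A_iv\neq0$ for $v\neq0$. The inequality from Step 2 is strict, so the images lie in the interior. Also $A_iK_\theta\subset K_\theta$ needs $A_iv$ not to point in the negative direction. This should follow from nonnegativity of $A_i$ together with $\sigma(A_iv)>0$, which is where invertibility enters.

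The main obstacle is Step 2 at the extreme indices $i=0$ and $i=b-1$. There $B_i$ has a column close to the boundary ($i=0$ gives a zero in the top-right entry), so subtracting $E_i$ can push an image vector toward the edge of the cone. I would first try to handle these indices by a separate direct computation of $A_0$ and $A_{b-1}$ on the two boundary rays of $K_\theta$, and use invertibility of the perturbed matrices to rule out images on the boundary. If the single cone $K_\theta$ fails there, the fallback is to verify condition (1) of Theorem \ref{thm: main theorem 3} instead. That means computing the attracting and repelling fixed points of each $f_i$ and $f_if_j$, which lie near $0$ (from the $(1,1)$-direction) and near the image of $u_i$, and showing that the images of attracting points stay uniformly away from the repelling ones.
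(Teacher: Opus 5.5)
Your reduction is correct and matches the paper. The paper's arc is $[-\tfrac12,\tfrac12]$, which is exactly your cone $K_{1/2}$, and then Theorem~\ref{thm: main theorem 3}, Theorem~\ref{thm: main theorem 1} and Theorem~\ref{theorem: KP main result} finish the argument as you describe. Step~3 is also fine.

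The gap is Step~2, which is the whole substance of the proof, and the estimates you sketch there cannot work.

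\emph{First problem: $\theta$ near $1$.} Here the claim $\sigma(B_iv)\gtrsim b\,\sigma(v)$ on $K_\theta$ is false. Normalize $\sigma(v)=1$ and $s(v)=x$. Then $\sigma(B_iv)=b+(b-2i-1)x$. At $x=\theta$, $i=b-1$ (or $x=-\theta$, $i=0$) this is only $1+(b-1)(1-\theta)$, because $B_0$ and $B_{b-1}$ fix the boundary rays with eigenvalue $1$. Near the edge of the quadrant the model therefore has an $O(1)$ margin, not an $O(b)$ one, so an additive error of size $\rho$ cannot be absorbed. This happens on a band of $O(\rho)$ indices at each end, not only at $i=0,b-1$.

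\emph{Second problem: row-sum information is too weak at any $\theta$.} Your two bounds reduce invariance of $K_\theta$ (worst case $|x|=\theta$, worst $i$) to
\[
\theta+2\rho<\theta\bigl(b-(b-1)\theta-4\rho\bigr).
\]
For $b=8$, $\rho=1$ (the first case with $\rho(b)\geq 1$) this reads $7\theta^2-3\theta+2<0$, which has no solution. Replacing $\|v\|_1$ by $\|v\|_\infty$ does not help: you get $9\theta^2-4\theta+1<0$, again impossible. So no tuning of $\theta$ produces the threshold $5\rho+3\le b$. The fallback via fixed points of $f_i$ and $f_if_j$ is not yet an argument.

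\textbf{What is missing} is the paper's finer counting of the entries of $A_i=\begin{pmatrix}p&q\\ r&s\end{pmatrix}$, with $m=\max\{\rho_1,\rho_2\}$ and $R=\rho_1+\rho_2\le 2m$. Write $F(A_i)=\begin{pmatrix}\alpha&\beta\\ \gamma&\delta\end{pmatrix}$.
\begin{enumerate}
\item Besides $p+q,\ r+s\ge b-R$, you need the upper bound $p+q,\ r+s\le b-m$, since $p+q=\#(E_1\cap D_2)\le\min\{\#D_1,\#D_2\}$. This gives $|\beta|\le (R-m)/2\le m/2$.
\item You need the cross-sum bounds $b-R-1\le p+s\le b-m+1$ and $b-R-1\le q+r\le b-m$. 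These come from tracking how $D_1+i$ and $D_1+i+1$ wrap around modulo $b$, which overlap in at most one point or not at all. They give $|\alpha|\le (m+2)/2$.
\item You also have $\delta\ge b-2m$, and $|\gamma|<\delta$ by Lemma~\ref{lemma: denominator is positive}. Hence $|\gamma x+\delta|\ge\delta(1-|x|)$ uniformly in $i$, so the extreme indices need no special treatment.
\end{enumerate}
Together these give $|f_i(x)|\le \frac{(m+2)t+m}{2(b-2m)(1-t)}$ on $[-t,t]$. At $t=\tfrac12$ this is $<\tfrac12$ exactly when $5m<b-2$. That is where $\rho(b)=\lfloor (b-3)/5\rfloor$ comes from, and why a moderate $\theta$, not $\theta\approx 1$, is the right choice.
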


\subsection{Application \RN{2}: Three-term recurrences}

Random Fibonacci sequences are classically defined by
\[
f_1 = f_2 = 1, \quad f_n = \pm f_{n-1} \pm f_{n-2},
\]
where the two signs are chosen to be $+$ or $-$ uniformly and independently at every step. Viswanath \cite{Viswanath} identified the almost sure growth rate of $(f_n)_n$ to be the following number.
\[
\sqrt[n]{ \left| f_n \right| } \to 1.13198824\ldots. \quad \left( \text{almost surely as }n \to \infty \right).
\]

Consider the random three-term recurrence, defined by
\begin{equation} \label{eq: three-term recurrence definition}
x_0 = x_1 = 1, \quad x_{n+1} = a_{i_{n+1}} x_n + b_{i_{n+1}} x_{n-1},
\end{equation}
where at each step a pair of positive numbers $(a_i, b_i)$ is sampled i.i.d.\ from a finite set $\{(a_1, b_1),$\\ $\ldots, (a_m, b_m)\}$ with law $\{ w_i \}_{i=1}^m$. The recurrence relation can be expressed using matrix products, as
\[
M_i = 
\begin{pmatrix}
0 & 1 \\
b_i & a_i
\end{pmatrix}, \qquad
\begin{pmatrix}
x_n \\
x_{n+1}
\end{pmatrix}
=
M_{i_{n+1}}
\begin{pmatrix}
x_{n-1} \\
x_n
\end{pmatrix}.
\]
Then, the almost sure growth rate of $(x_n)_n$ is $e^\lambda$, where $\lambda$ is the top Lyapunov exponent of $\{M_i\}_{i = 1}^m$ with weight $(w_i)_{i = 1}^m$. The following is an immediate corollary of Theorem \ref{thm: main theorem 1}.

\begin{proposition} \label{prop: three-term recurrence and Neumann series method}
Let $\{(a_i, b_i)\}_{i=1}^m \subset (0, \infty)^2$ be a finite set, $\{w_i\}_{i=1}^m$ a probability vector, and consider the sequence $\{x_n\}_{n=0}^\infty$ defined by equation \eqref{eq: three-term recurrence definition} with law $(w_i)_{i=1}^m$. Then, the matrices $\{M_i M_j\}_{i,j = 1}^m$ are positive, and
\[
\lim_{n \to \infty} \sqrt[n]{ x_n } = \exp \left( \frac12 \sum_{k = 0}^\infty \big( T^k \boldsymbol{v} \big)_0 \right) \quad \text{almost surely.}
\]
Here, $T$ and $\boldsymbol{v}$ are obtained from applying Theorem \ref{thm: main theorem 1} to $\{M_i M_j\}_{i,j = 1}^m$ with weights $(w_i w_j)_{i,j = 1}^m$.
\end{proposition}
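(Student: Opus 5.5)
The plan is to reduce everything to Theorem \ref{thm: main theorem 1}, applied to the family of length-two products. First we check positivity. For positive $a_i,b_i$,
\[
M_iM_j=\begin{pmatrix}0&1\\ b_i&a_i\end{pmatrix}\begin{pmatrix}0&1\\ b_j&a_j\end{pmatrix}
=\begin{pmatrix} b_j & a_j\\ a_ib_j & b_i+a_ia_j\end{pmatrix},
\]
whose entries are all positive. Also $\det(M_iM_j)=b_ib_j\neq0$, so the family is invertible. Theorem \ref{thm: main theorem 1} therefore applies to $\{M_iM_j\}_{i,j}$ with the product weights $(w_iw_j)$, which sum to $1$. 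It gives the Lyapunov exponent $\lambda_2$ of that family as $\sum_k (T^k\boldsymbol v)_0$.

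Next we relate $\lambda_2$ to the exponent $\lambda$ of $\{M_i\}$. Pair consecutive letters of an i.i.d.\ sequence $(i_n)$ with law $(w_i)$. The blocks $(i_{2k-1},i_{2k})$ are then i.i.d.\ with law $(w_iw_j)$, and the product over blocks $k$ is $(M_{i_{2k}}M_{i_{2k-1}})\cdots$. Taking the Furstenberg--Kesten limit along even times gives $\lambda_2=2\lambda$, since the limit along the full sequence exists a.s.\ and so agrees with the limit along the subsequence $2n$. Hence $\lambda=\tfrac12\sum_k(T^k\boldsymbol v)_0$.

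It remains to show that $\tfrac1n\log x_n\to\lambda$ almost surely. Write $u_n=(x_n,x_{n+1})^\top=M_{i_{n+1}}\cdots M_{i_2}u_0$ with $u_0=(1,1)^\top$. All vectors stay in the positive cone and $x_n>0$ for every $n$.
\begin{itemize}
\item \emph{Upper bound.} Since $x_n\le\|u_n\|_1\le\|P_n\|\,\|u_0\|$, where $P_n$ is the product, we get $\limsup\frac1n\log x_n\le\lambda$.
\item \emph{Lower bound for the vector.} For a non-negative matrix $P$ and the positive vector $u_0$, we have $\|Pu_0\|_1\ge \|P\|_1\min_j(u_0)_j$ (column-sum norm). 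Hence $\frac1n\log\|u_n\|_1\to\lambda$.
\item \emph{From the vector to $x_n$.} We use $x_{n+1}=a x_n+b x_{n-1}\le C(x_n+x_{n-1})$ with $C=\max_i\max(a_i,b_i)$. Then $\|u_n\|_1\le (1+C)(x_n+x_{n-1})\le 2(1+C)\max(x_n,x_{n-1})$.
\item \emph{Monotonicity.} Monotonicity is not automatic, so we use positivity of the lag instead: $x_{n+1}\ge a_{\min}x_n$ and $x_{n+1}\ge b_{\min}x_{n-1}$. Thus $x_n$ is comparable within constant factors to $\max(x_n,x_{n-1})$. More precisely, $x_n\ge c\,x_{n-1}$ with $c=a_{\min}>0$, so $\max(x_n,x_{n-1})\le c^{-1}x_n$ when $c\le1$.
\end{itemize}
Combining these gives $\liminf\frac1n\log x_n\ge\lambda$, hence $\sqrt[n]{x_n}\to e^{\lambda}$, which is the claimed formula.

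The main obstacle is this last comparison between the scalar $x_n$ and the norm of the product. Positivity of all $a_i$ handles it through the one-step bound $x_n\ge a_{\min}x_{n-1}$. The remaining steps are direct applications of Theorem \ref{thm: main theorem 1} and of the block-sampling argument.
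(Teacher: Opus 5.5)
Your proposal is correct and takes the same route as the paper, which treats this as an immediate corollary of Theorem \ref{thm: main theorem 1}: show $M_iM_j$ is positive, apply the theorem to the length-two family with weights $w_iw_j$, and combine $\lambda_2=2\lambda$ with the fact that $x_n$ grows at rate $e^{\lambda}$. Your comparison of $x_n$ with $\|u_n\|_1$ fills in a step the paper leaves implicit; to cover both the case $a_{\min}>1$ and the step $n=1$ (where $x_1=x_0$), take $c=\min\{1,a_{\min}\}$.
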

\begin{remark}
One can also prove that $\{M_i\}_{i = 1}^m$ do not have generalized heteroclinic connections of depth $2$ with easy calculations. Also, if $a_i = 0$, then $ \begin{pmatrix} 0 & 1 \\ b_i & 0 \end{pmatrix} \begin{pmatrix} 0 & 1 \\ b_i & 0 \end{pmatrix} = b_i \begin{pmatrix} 1 & 0 \\ 0 & 1 \end{pmatrix}$, yielding a generalized heteroclinic connection of depth $2$. If $b_i = 0$, the matrix $M_i$ is singular.
\end{remark}

\section{Kernel expansion, Definitions, and Basic Lemmas} \label{section: definitions and basic properties}

\subsection{Heuristic explanation} \label{subsection: Heuristic explanation}
The core idea of Theorem \ref{thm: main theorem 1} is the ``Kernel expansion'', introduced in \cite{Alibabaei26}. Let $\{A_i\}_{i \in I}$ be a finite family of invertible positive $2 \times 2$ matrices. Consider its transposed system $\{A_i^{\top}\}_{i \in I}$, which has the same Lyapunov exponent as the original system, and fix a $\mu$-stationary measure $\widetilde{\nu}$ for the family $\{f_i^{\top}\}_{i \in I}$, where $f_i^{\top} = [F(A_i)]^{\top} = [F(A_i^{\top})]$. Since $A_i^{\top}$ is a positive matrix we have $f_i^{\top}([-1,1]) \subset (-1,1)$. Thus, we can restrict ourselves to $[-r,r]$ with some $0<r<1$, and the support of $\widetilde{\nu}$ is contained in $[-r,r]$. We define for each natural number $n$,
\[
\psi_n(x) = x^n - \sum_{i \in I} w_i \, {\big( f_i^{\top}(x) \big)}^n.
\]
By the definition of stationary measures we have
\[
\int_{[-r,r]} \psi_n \, d\widetilde{\nu}(x) = 0.
\]
So, $\{ \psi_n \}_{n \in \mathbb N}$ are ``Kernel functions'' that integrate to $0$ with respect to $\widetilde{\nu}$. Suppose that, for each $n \in \mathbb N$, the following formal expansion is uniformly convergent, where the coefficients $b_{k,n}$ are as in Theorem \ref{thm: main theorem 1}.
\[
\sum_{i \in I} w_i \, {f_i^{\top}(x)}^n = \sum_{k = 0}^\infty b_{k, n} x^k.
\]

Now, define
\begin{equation*}
\mathscr{N} := \left\{ h: [-r, r] \to \mathbb{R}
\setcond
\text{There is a bounded $c = (c_n)_{n=0}^\infty \in \mathbb{R}^{\mathbb{N}_0}$ with } h = c_0 + \sum_{n=1}^\infty c_n \psi_n \right\},
\end{equation*}
\begin{equation*}
\mathscr{A} := \left\{ g: [-r, r] \to \mathbb{R}
\setcond
\text{There is a bounded $ a =(a_k)_{k=0}^\infty \in \mathbb{R}^{\mathbb{N}_0}$ with } g = \sum_{k=0}^\infty a_k x^k \right\}.
\end{equation*}
By $f_i^{\top}([-1,1]) \subset [-r,r]$, the series in the definition of $\mathscr{N}$ and $\mathscr{A}$ are uniformly convergent. Suppose that we have $\mathscr{N} \cong \ell^\infty(\mathbb{N}_0) \cong \mathscr{A}$ via the correspondence of coefficients. Assume further that, for $T$ in Theorem \ref{thm: main theorem 1}, the map $T: \ell^\infty(\mathbb{N}_0) \to \ell^\infty(\mathbb{N}_0)$ is well-defined and bounded. Let $\mathrm{Id}: \ell^\infty(\mathbb{N}_0) \to \ell^\infty(\mathbb{N}_0)$ denote the identity map. Then, the following composition of operators is an inclusion.
\[
\mathscr{N} \xrightarrow{\cong} \ell^\infty(\mathbb{N}_0) \xrightarrow{\mathrm{Id} - T} \ell^\infty(\mathbb{N}_0) \xrightarrow{\cong} \mathscr{A}.
\]
If we na\"ively assume furthermore that the operator norm of $T$ is less than one, then the above operator is invertible with the Neumann series:
\[
(\mathrm{Id}-T)^{-1} = \sum_{n = 0}^\infty T^n.
\]

In addition, by the definition of $\boldsymbol{v}$ (for the original family), we have the following identity.
\[
\sum_{i \in I} w_i \, \log \norm{
\frac{1}{2} A_i^{\top}
\begin{pmatrix}
1+x \\
1-x
\end{pmatrix}
}_1
=
\sum_{n = 0}^\infty \boldsymbol{v}_n \, x^n,
\]
Thus, assuming uniform convergence and using $\int \psi_n d\widetilde{\nu} = 0$,
\begin{align*}
\int_{[-r, r]} \sum_{i \in I} w_i \, \log \norm{
\frac{1}{2} A_i^{\top}
\begin{pmatrix}
1+x \\
1-x
\end{pmatrix}
}_1 \, d\widetilde{\nu} \,
&= \, \int_{[-r, r]}
\begin{pmatrix}
1, \, \psi_1, \, \psi_2, \, \cdots
\end{pmatrix}
\sum_{n=0}^\infty T^n \,\boldsymbol{v} \, d\widetilde{\nu} \, 
= \left( \sum_{n=0}^\infty T^n \, \boldsymbol{v} \right)_0.
\end{align*}
Hence we obtain the desired expression for $\lambda$ by taking the supremum over all $\widetilde{\nu}$. $\big($We will see that the right-hand side is in fact connected to the original un-transposed system.$\big)$

However, $T$ generally does not satisfy $\| T \| < 1$. So, in the proof of Theorem \ref{thm: main theorem 1}, we compute $\sum_{k = 0}^{n-1}( T^k \boldsymbol{v} )_0$ explicitly and show that it converges to $\lambda$ using a contraction/fixed-point argument.

\subsection{Definitions and Basic Lemmas}

We introduce the following classification of M\"obius maps. Since we deal with orientation-reversing maps, we need the unusual ``involution'' class. For a M\"obius map $f \in \mathrm{PGL}_2(\mathbb{R})$ satisfying $f(\widehat{\infty}) = \widehat{\infty}$, define its derivative at $\widehat{\infty}$ by $f'(\widehat{\infty}) := \left. \frac{d}{dz} \frac{1}{f(1/z)} \right|_{z = 0}$.
\begin{definition}
A fixed point $a$ of a M\"obius map $f \in \mathrm{PGL}_2(\mathbb{R})$ is said to be attracting if $|f'(a)| < 1$, repelling if $|f'(a)| > 1$, and neutral if $|f'(a)| = 1$. A non-identity M\"obius map with real coefficients is said to be:
\begin{enumerate}
\item \textbf{elliptic} if there is no fixed point in $\widehat{\mathbb{R}}$,
\item \textbf{parabolic} if there is a unique, neutral fixed point in $\widehat{\mathbb{R}}$,
\item \textbf{involution} if there are exactly two distinct neutral fixed points in $\widehat{\mathbb{R}}$, and
\item \textbf{hyperbolic} if there are exactly two distinct fixed points in $\widehat{\mathbb{R}}$, one attracting and the other repelling.
\end{enumerate}
\end{definition}

\begin{lemma} \label{lemma: Mobius classification}
A non-identity M\"obius map $f \in \mathrm{PGL}_2(\mathbb{R})$ is either elliptic, parabolic, involution, or hyperbolic. An involution M\"obius map $f$ satisfies $f^2 = \mathrm{id}$, where $\mathrm{id}: \widehat{\mathbb{R}} \to \widehat{\mathbb{R}}$ is the identity map.
\end{lemma}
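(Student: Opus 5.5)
Fix a representative matrix $M=\begin{pmatrix}\alpha&\beta\\ \gamma&\delta\end{pmatrix}\in\mathrm{GL}_2(\mathbb{R})$ with $f=[M]\neq\mathrm{id}$, so $M$ is not scalar. The fixed points of $f$ in $\widehat{\mathbb{R}}$ correspond to the real eigenlines of $M$: $x\in\mathbb{R}$ is fixed iff $(x,1)^{\top}$ is an eigenvector, and $\widehat{\infty}$ is fixed iff $(1,0)^{\top}$ is. The plan is to classify by the sign of the discriminant $\Delta=(\operatorname{tr}M)^2-4\det M$ of the characteristic polynomial. Since $M$ is non-scalar, each eigenvalue has a one-dimensional eigenspace, so distinct real eigenvalues give exactly two fixed points and a repeated eigenvalue gives exactly one.

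If $\Delta<0$, there are no real eigenvectors and $f$ is elliptic. If $\Delta=0$, $M$ has a single eigenline, hence a unique fixed point $a$. Conjugating by a real M\"obius map sending $a$ to $\widehat{\infty}$, $M$ becomes $\begin{pmatrix}\mu&\beta'\\0&\mu\end{pmatrix}$ with $\beta'\neq0$. Then $f(x)=x+\beta'/\mu$, and $f'(\widehat{\infty})=1$ follows directly from the definition $\frac{d}{dz}\frac{1}{f(1/z)}$. So $f$ is parabolic.

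If $\Delta>0$, there are real eigenvalues $\mu_1\neq\mu_2$ and two fixed points. Conjugating so that these become $0$ and $\widehat{\infty}$ gives $f(x)=kx$ with $k=\mu_1/\mu_2\neq1$. Then $f'(0)=k$ and $f'(\widehat{\infty})=1/k$. If $|k|\neq1$, one fixed point is attracting and the other repelling, so $f$ is hyperbolic. Otherwise $k=-1$, both fixed points are neutral, and $f$ is an involution with $f^2(x)=x$, i.e.\ $f^2=\mathrm{id}$. These three cases are exhaustive and mutually exclusive, which gives the classification.

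The main obstacle is checking that conjugation preserves the multipliers $f'(a)$, including when $a=\widehat{\infty}$, so that the computations in normal form transfer back to $f$. This follows from the chain rule in the local coordinate $1/z$ near $\widehat{\infty}$, together with the fact that $g f g^{-1}$ has derivative $f'(a)$ at $g(a)$ whenever $f(a)=a$. Alternatively, avoid conjugation and use $f'(a)=\det M/(\gamma a+\delta)^2$ with $\gamma a+\delta$ equal to an eigenvalue, which gives the ratio of eigenvalues directly.
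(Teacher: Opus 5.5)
Your proof is correct and follows essentially the same route as the paper. Both split into cases by the sign of the discriminant; your $(\mathrm{tr}\,M)^2-4\det M$ is exactly the paper's $(d-a)^2+4bc$. Both then conjugate to the normal form $x\mapsto kx$ when there are two fixed points. The differences are in packaging: reading fixed points as eigenlines lets you treat a fixed point at $\widehat{\infty}$ uniformly instead of splitting off the case $c=0$, and you get neutrality in the parabolic case by conjugating to a translation rather than computing $f'(q)=1$ directly.
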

The proof of Lemma \ref{lemma: Mobius classification} is elementary, and is included in Appendix \ref{appendix: proof of minor lemmas}.

Let us now give the precise definition of generalized heteroclinic connections of depth $k$.

\begin{definition} \label{def: generalized depth2 heteroclinic connection}
Let $\{A_i\}_{i \in I}$ be a finite family of invertible non-negative $2 \times 2$ matrices, and let $\{f_i\}_{i \in I} \subset \mathrm{PGL}_2(\mathbb{R})$ be the corresponding projective actions. For $k \in \mathbb{N} \cup \{\infty\}$, let
\begin{gather*}
\pazocal{F}_k = \bigcup_{n = 1}^k \, \, \big\{ f_{i_n} \circ \cdots \circ f_{i_1} \, | \, (i_1, \ldots, i_n) \in I^n \big\}, \\
\mathrm{Attr}_k = \{ a \in \widehat{\mathbb{R}} \, | \, \text{There is $f \in \pazocal{F}_k$ with $f(a) = a$ and $|f'(a)| \leq 1$} \}, \\
\mathrm{Rep}_k = \{ b \in \widehat{\mathbb{R}} \, | \, \text{There is $f \in \pazocal{F}_k$ with $f(b) = b$ and $|f'(b)| \geq 1$} \}.
\end{gather*}
Then $\{A_i\}_{i \in I}$ is said to have a \textbf{generalized heteroclinic connection of depth} $\boldsymbol{k}$ if there is $h \in \{\mathrm{id}\} \cup \pazocal{F}_k$ and $a \in \mathrm{Attr}_k$ such that $h(a) \in \mathrm{Rep}_k$.
\end{definition}

\pagebreak

\begin{remark} \label{remark: generalized heteroclinic connections}
\, \\[-20pt]
\begin{enumerate}
\item $\mathrm{Attr}_k$ is the set of attracting or neutral fixed points, and $\mathrm{Rep}_k$ is the set of repelling or neutral fixed points. For hyperbolic elements, an attracting fixed point on the projective space corresponds to an unstable direction for a matrix, and repelling to stable.
\item We will prove that the existence of generalized heteroclinic connections of depth $\infty$ is equivalent to the existence of generalized heteroclinic connections of depth $2$. (Proposition \ref{prop: heteroclinic connection and common strictly invariant arc})
\item The notion of heteroclinic connection was introduced by Avila--Bochi--Yoccoz \cite{Avila--Bochi--Yoccoz} for finite-valued $\mathrm{SL}_2(\mathbb{R})$ cocycles. Generalized heteroclinic connections of depth $\infty$ are the analogue of the degeneracies that occur at the boundary of a connected component of the uniformly hyperbolic locus: such a point either (i) contains a projective identity, (ii) contains a parabolic element, or (iii) has a heteroclinic connection. See \cite[Theorem 4.1]{Avila--Bochi--Yoccoz} for details.
\item Avila--Bochi--Yoccoz \cite{Avila--Bochi--Yoccoz} also introduces the notion of ``multicone'': a finite union of open intervals $M \subset \widehat{\mathbb{R}}$ is said to be a multicone of $\{f_i\}_{i \in I}$ if the closure of $f_i(M)$ is contained in $M$ for every $i \in I$. Then, a common strictly invariant arc is the closure of a \emph{connected} multicone. Uniform hyperbolicity corresponds to the existence of a multicone, although not necessarily connected. \cite[Theorem 2.2]{Avila--Bochi--Yoccoz}
\end{enumerate}
\end{remark}

Let $\mathbb{D}_t = \left\{ z \in \mathbb{C} \, : \, |z| < t \right\}$ for $t > 0$. The following lemma states that the behavior of a real M\"obius map on $[-1,1]$ controls its behavior on $\overline{\mathbb{D}_1}$.
\begin{lemma} \label{lemma: real contraction implies complex contraction}
Let $f \in \mathrm{PGL}_2(\mathbb{R})$ be a M\"obius transformation with real coefficients, and suppose that there is $0 < r \leq 1$ with $f([-1,1]) \subset [-r, r]$. Then, we have $f(\overline{ \mathbb{D}_1} ) \subset \overline{\mathbb{D}_r}$ and $f(\mathbb{D}_1) \subset \mathbb{D}_r$.
\end{lemma}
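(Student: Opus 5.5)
The approach is to use the fact that real M\"obius maps send circles and lines to circles and lines, together with the symmetry of $f$ under complex conjugation. Write $f(z) = \frac{\alpha z + \beta}{\gamma z + \delta}$ with real coefficients. Since $f([-1,1]) \subset [-r,r]$ is bounded, the pole $-\delta/\gamma$ (if $\gamma \neq 0$) lies outside $[-1,1]$. A real pole on $\widehat{\mathbb{R}}$ outside $[-1,1]$ is automatically outside $\overline{\mathbb{D}_1}$. Hence $f$ is holomorphic on a neighbourhood of $\overline{\mathbb{D}_1}$, and $f(\overline{\mathbb{D}_1})$ is a compact closed disc $E$ with boundary circle $C = f(\partial \mathbb{D}_1)$.

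Next I would pin down the disc $E$ by symmetry. Because $f$ has real coefficients, $f(\bar z) = \overline{f(z)}$, so $C$ and $E$ are invariant under complex conjugation. Therefore the center of $C$ lies on $\mathbb{R}$. A circle with real center is determined by its two intersection points with $\mathbb{R}$, and these are exactly $f(1)$ and $f(-1)$. Indeed, $f$ maps $\widehat{\mathbb{R}}$ to $\widehat{\mathbb{R}}$ bijectively, so $C \cap \widehat{\mathbb{R}} = f(\partial\mathbb{D}_1 \cap \widehat{\mathbb{R}}) = \{f(1), f(-1)\}$. Thus $E$ is the closed disc whose diameter is the segment $[f(-1), f(1)]$, up to order. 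These two endpoints are distinct because $f$ is injective, and both lie in $[-r,r]$ by hypothesis.

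The conclusion then follows. A closed disc whose diameter is a segment contained in $[-r,r]$ lies in $\overline{\mathbb{D}_r}$, because every point of it is within half the segment's length of its midpoint $m$, and $|m| + \tfrac12 |f(1)-f(-1)| = \max(|f(1)|, |f(-1)|) \le r$. This proves $f(\overline{\mathbb{D}_1}) \subset \overline{\mathbb{D}_r}$.

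For the open statement, $f(\mathbb{D}_1)$ is the interior of $E$, namely the open disc of radius $\rho$ about $m$, where $|m| + \rho \le r$. Any point $w$ of it satisfies $|w| < |m| + \rho \le r$, so $f(\mathbb{D}_1) \subset \mathbb{D}_r$.

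The only delicate step is showing that the pole avoids $\overline{\mathbb{D}_1}$, so that the image is a genuine disc and not the exterior of a disc or a half-plane. This is handled by the reality of the pole together with boundedness on $[-1,1]$. If $\gamma = 0$, then $f$ is affine and the same argument applies directly.
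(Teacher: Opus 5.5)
Your proof is correct and follows essentially the same route as the paper: the pole lies off $\overline{\mathbb{D}_1}$, so $f(\partial\mathbb{D}_1)$ is a genuine circle meeting $\mathbb{R}$ exactly at $f(\pm1)$ with real center. It therefore has diameter $[f(-1),f(1)]$, and $|m|+\rho=\max\{|f(-1)|,|f(1)|\}\le r$. The differences are cosmetic. You get the real center from conjugation symmetry, where the paper uses conformality (orthogonal crossing of $\mathbb{R}$). You also identify $f(\overline{\mathbb{D}_1})$ and $f(\mathbb{D}_1)$ directly as the closed and open disc using that $f$ is a homeomorphism of the sphere, where the paper invokes the maximum modulus principle and the open mapping theorem.
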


\begin{proof}
A M\"obius transformation sends a circle to a line if and only if the circle contains the pole $f^{-1}(\widehat{\infty})$, and otherwise a circle is sent to a circle. Since the unique pole $f^{-1}(\widehat{\infty})$ of $f$ is in $\widehat{\mathbb{R}} \setminus [-1,1]$, we know that $f( \partial \mathbb{D}_1)$ is a circle. Also, $f$ is conformal since it is holomorphic on a neighborhood of $\overline{\mathbb{D}_1}$ and has non-vanishing derivative. Thus, the circle $f( \partial \mathbb{D}_1)$ crosses $\mathbb{R}$ orthogonally at $f(\pm1)$, and the center of the circle is $c = \frac{1}{2} \big( f(-1) + f(1) \big) \in [-r,r]$, radius $\rho = \frac{1}{2}\left| f(-1) - f(1) \right|$. Then, 
\[
\left| c + \rho e^{i\theta} \right| \leq |c| + \rho = \max \{ |f(-1)|, |f(1)| \} \leq r.
\]
This implies that $f(z) \in \overline{\mathbb{D}_r}$ for any $z \in \partial \mathbb{D}_1$. By the maximum modulus principle, $|f|$ attains a maximum at the boundary of $\mathbb{D}_1$. We conclude that $f( \overline{\mathbb{D}_1} ) \subset \overline{\mathbb{D}_r}$. The claim $f(\mathbb{D}_1) \subset \mathbb{D}_r$ follows by the open mapping theorem.
\end{proof}

\begin{definition}
For $x, y \in (-1, 1)$ define
\[
d_{\mathrm{hyp}} ( x, y ) = 2 \mathrm{artanh} \left( \left| \frac{ x - y }{ 1- xy } \right| \right).
\qquad \left( \mathrm{artanh}(x) = \frac{1}{2} \log \frac{1+x}{1-x}. \right)
\]
This $d_{\mathrm{hyp}}$ defines a distance on $(-1,1)$, and is called the \textbf{hyperbolic distance}.
\end{definition}

We also have the following integral representation:
\begin{equation} \label{eq: integral representation of hyperbolic metric}
d_{\mathrm{hyp}}(x, y) = \left| \int_x^y \frac{2}{1 - t^2} dt \right|.
\end{equation}

If a real M\"obius map sends the interval $[-1,1]$ strictly inside itself, it is a contraction with respect to the hyperbolic metric.
\begin{lemma} \label{lemma: small image implies contraction}
Let $f \in \mathrm{PGL}_2(\mathbb{R})$ be a M\"obius transformation with real coefficients satisfying $f([-1,1]) \subset [-r,r]$ for some $0<r<1$. Then for any $x,y \in (-1,1)$,
\[
d_{\mathrm{hyp}}(f(x),f(y)) \; \leq \; r \, d_{\mathrm{hyp}}(x,y),
\]
i.e.\ the restriction $f|_{(-1,1)}: (-1,1) \to (-1,1)$ is a strict contraction with respect to $d_{\mathrm{hyp}}$.
\end{lemma}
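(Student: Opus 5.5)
The plan is to use the Schwarz--Pick lemma in its hyperbolic form. The hyperbolic metric on $(-1,1)$ is the restriction of the Poincaré metric $\frac{2|dz|}{1-|z|^2}$ on $\mathbb{D}_1$ to the real diameter. This follows from equation \eqref{eq: integral representation of hyperbolic metric}, because the real segment is a geodesic of the disc. By Lemma \ref{lemma: real contraction implies complex contraction}, the map $f$ is holomorphic on a neighborhood of $\overline{\mathbb{D}_1}$ and satisfies $f(\mathbb{D}_1)\subset \mathbb{D}_r$. I will compare the infinitesimal densities and then integrate along the real segment from $x$ to $y$.

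The key pointwise estimate, for $t\in(-1,1)$, is
\[
\frac{2|f'(t)|}{1-f(t)^2}\;\le\; r\,\frac{2}{1-t^2}.
\]
To prove it, set $g=f/r$, which maps $\mathbb{D}_1$ into $\mathbb{D}_1$. The Schwarz--Pick lemma applied to $g$ gives
\[
\frac{|f'(t)|/r}{1-f(t)^2/r^2}\le \frac{1}{1-t^2},
\]
that is,
\[
|f'(t)|\le \frac{r(1-t^2)^{-1}}{1}\cdot\Bigl(1-\frac{f(t)^2}{r^2}\Bigr).
\]
Since $|f(t)|\le r<1$, we have $1-f(t)^2/r^2\le 1-f(t)^2$. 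Dividing by $1-f(t)^2$ then yields the displayed inequality.

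Now integrate. Because $f$ is real and monotone on $[-1,1]$ (it has no pole there), the image path $f([x,y])$ is exactly the segment between $f(x)$ and $f(y)$. Changing variables $s=f(t)$ gives
\[
d_{\mathrm{hyp}}(f(x),f(y))=\Bigl|\int_x^y \frac{2|f'(t)|}{1-f(t)^2}\,dt\Bigr|\le r\Bigl|\int_x^y\frac{2}{1-t^2}dt\Bigr| = r\,d_{\mathrm{hyp}}(x,y).
\]

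The only step I expect to need care is the pointwise derivative bound, and specifically the gain of the factor $r$ rather than just the bound $1$. Plain Schwarz--Pick applied to $f:\mathbb{D}_1\to\mathbb{D}_1$ only gives a non-expansion. The gain comes from rescaling into $\mathbb{D}_r$ and then discarding the $r^{-2}$ inside the denominator, which is legitimate exactly because $|f(t)|\le r$. If one prefers to avoid complex analysis, an alternative is to normalize by real Möbius automorphisms of $(-1,1)$, which are hyperbolic isometries, so that $f(t)=0$ and $t=0$. One would then bound $|f'(0)|\le r$ directly via the fact that the image interval lies in $[-r,r]$. The complex route above is shorter given Lemma \ref{lemma: real contraction implies complex contraction}.
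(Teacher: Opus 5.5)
Your proposal is correct and follows the paper's proof essentially verbatim. You rescale $f$ by $1/r$ so that Lemma \ref{lemma: real contraction implies complex contraction} puts it in the setting of Schwarz--Pick, drop the $r^{-2}$ in the denominator to gain the factor $r$, and integrate the resulting density bound along the real segment using \eqref{eq: integral representation of hyperbolic metric}. (Minor point: the inequality $1-f(t)^2/r^2\le 1-f(t)^2$ holds simply because $r\le 1$; it is the containment $f(\mathbb{D}_1)\subset\mathbb{D}_r$ that Schwarz--Pick needs. Your use of monotonicity to get an equality where the paper uses the triangle inequality for integrals is fine.)
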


\begin{proof}
Suppose $0 < r < 1$ satisfies $f( [-1, 1] ) \subset [-r, r]$. By Lemma \ref{lemma: real contraction implies complex contraction}, it follows that $f(\mathbb{D}_1)\subset \mathbb{D}_r$. Define $F:\mathbb{D}_1\to\mathbb{C}$ by
\[
F(z) = \frac{1}{r}f(z).
\]
Then $F$ is holomorphic on $\mathbb{D}_1$ and satisfies $F(\mathbb{D}_1)\subset \mathbb{D}_1$. By the Schwarz--Pick theorem, for every $z\in\mathbb{D}_1$,
\[
\frac{|F'(z)|}{1-|F(z)|^2}\leq \frac{1}{1-|z|^2}.
\]
Restricting to real $x\in(-1,1)$, we obtain
\[
\frac{|f'(x)|}{1-|f(x)|^2}
=\frac{r|F'(x)|}{1-r^2|F(x)|^2}
\leq \frac{r|F'(x)|}{1-|F(x)|^2}
\leq \frac{r}{1-x^2}.
\]

Now let $x,y\in(-1,1)$. Using the integral representation \eqref{eq: integral representation of hyperbolic metric}, we have
\begin{align*}
d_{\mathrm{hyp}}(f(x),f(y))
&=\left| \int_{f(y)}^{f(x)} \frac{2}{1-u^2} \, du \right|
=\left| \int_y^x \frac{2f'(t)}{1-f(t)^2} \, dt \right|\\
&\leq \int_{\min\{x,y\}}^{\max\{x,y\}} \frac{2|f'(t)|}{1-f(t)^2} \, dt
\leq r \int_{\min\{x,y\}}^{\max\{x,y\}} \frac{2}{1-t^2} \, dt
= r \, d_{\mathrm{hyp}}(x,y).
\end{align*}
Since $0<r<1$, this shows that $f|_{(-1,1)}$ is a strict contraction with respect to $d_{\mathrm{hyp}}$.
\end{proof}

\begin{definition}
Denote by $\mathscr{P}(X)$ the set of Borel probability measures on $X$. For $\nu_1, \nu_2 \in \mathscr{P}(-1,1)$, define the set of couplings as
\[
\Pi( \nu_1, \, \nu_2) = \left\{ \eta \in \mathscr{P} \Big( (-1, 1) \times (-1, 1) \Big) \setcond (\pi_1)_* \eta = \nu_1, (\pi_2)_* \eta = \nu_2 \right\}.
\]
Here, $\pi_j$ is the projection onto the $j$-th coordinate for $j = 1,2$. Define
\[
\mathscr{P}_1(-1,1) = \left\{ \nu \in \mathscr{P}(-1,1) \setcond \int_{(-1,1)} d_{\mathrm{hyp}}(x,0) d\nu(x) < \infty \right\}.
\]
We define the \textbf{Wasserstein-1 metric} $W_1^{d_{\mathrm{hyp}}}$ on $\mathscr{P}_1(-1,1) \times \mathscr{P}_1(-1,1)$ by
\[
W_1^{d_{\mathrm{hyp}}}(\nu_1, \nu_2) = \inf_{\eta \in \Pi(\nu_1, \nu_2)} \int_{(-1, 1) \times (-1, 1)} d_{\mathrm{hyp}}(x, y) d\eta(x, y). \qquad \big( \nu_1, \nu_2 \in \mathscr{P}_1(-1,1). \big)
\]
\end{definition}

It is known that $W_1^{d_{\mathrm{hyp}}}(\nu_1, \nu_2)$ is finite for $\nu_1, \nu_2 \in \mathscr{P}_1(-1,1)$, and the completeness of $\left( \mathscr{P}_1(-1,1), W_1^{d_{\mathrm{hyp}}} \right)$ follows from the completeness of $\left( (-1,1), d_{\mathrm{hyp}} \right)$ \cite[Theorem 6.18]{Villani}. In this paper we write $W_1$ for $W_1^{d_{\mathrm{hyp}}}$.

\section{Proof of Theorem \ref{thm: main theorem 1}} \label{section: proof of main theorem 1}

\begin{proof}[Proof of Theorem \ref{thm: main theorem 1}]

Let $F_i = F(A_i)$ and $f_i = [F_i] \in \mathrm{PGL}_2(\mathbb{R})$. Then, by the positivity of $A_i$, there is $0 < r < 1$ such that $f_i([-1,1]) \subset [-r, r]$ for all $i \in I$. Fix such $r$.

Let $g \in \mathrm{PGL}_2(\mathbb{R})$ be any M\"obius transformation. Define $s: \widehat{\mathbb{C}} \to \widehat{\mathbb{C}}$ by $s(x) = -\frac{1}{x}$ for $x \in \mathbb{C}$ and $s(0) = \widehat{\infty}$, $s(\widehat{\infty}) = 0$. Then, direct calculation shows that
\begin{equation} \label{eq: transpose identity}
g^{\top}
= s \circ g^{-1} \circ s.
\end{equation}
Suppose $g([-1,1]) \subset (-1,1)$. Since $g: \widehat{\mathbb{R}} \to \widehat{\mathbb{R}}$ is a homeomorphism, we have
\[
g^{\top}([-1,1]) = s \bigg( g^{-1} \big( \widehat{\mathbb{R}} \setminus (-1,1) \big) \bigg) \subset s \bigg( \widehat{\mathbb{R}} \setminus [-1,1] \bigg) = (-1,1).
\]
This shows that we have the following for every $i \in I$.
\begin{equation} \label{eq: bounding the transpose of gi}
f_i^{\top}([-1,1]) \subset (-1,1).
\end{equation}

Write
\begin{equation*}
F_i=
\begin{pmatrix}
p_1(i) & p_2(i) \\
q_1(i) & q_2(i)
\end{pmatrix}.
\end{equation*}

\begin{lemma} \label{lemma: denominator is positive}
Suppose $A$ is an invertible non-negative $2 \times 2$ matrix. Let $F(A) = \begin{pmatrix} a & b \\ c & d \end{pmatrix}$. Then $cx + d > 0$ for any $x \in [-1,1]$.
\end{lemma}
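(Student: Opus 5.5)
Write $A=\begin{pmatrix} p & q\\ r & s\end{pmatrix}$ with $p,q,r,s\ge 0$. The plan is to compute $cx+d$ directly from the definition \eqref{eq: definition of F} and reduce positivity to the non-negativity of the entries plus invertibility. From \eqref{eq: definition of F}, the bottom row of $F(A)$ is
\[
c=\tfrac12(p-q+r-s),\qquad d=\tfrac12(p+q+r+s).
\]
Hence
\[
cx+d=\tfrac12\big((p+r)(1+x)+(q+s)(1-x)\big)=\big\|\tfrac12 A\,(1+x,\;1-x)^{\top}\big\|_1 .
\]
The second equality is the natural interpretation: the denominator of the M\"obius action is exactly the $\ell^1$-norm of the image of the simplex point.

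Next I use non-negativity. For $x\in[-1,1]$ both $1+x$ and $1-x$ are $\ge 0$, and they are not both zero. All coefficients $p+r$ and $q+s$ are $\ge 0$, so $cx+d\ge 0$.

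The remaining step, which is the only place invertibility enters, is to exclude equality. Suppose $cx+d=0$. At $x=1$ this forces $p+r=0$, so $A$ has a zero first column. At $x=-1$ it forces $q+s=0$, so $A$ has a zero second column. For $x\in(-1,1)$ both weights $1+x$ and $1-x$ are positive, which forces $p+r=q+s=0$, so $A=0$. In every case $A$ has a zero column, contradicting $\det A\neq 0$. Therefore $cx+d>0$ on $[-1,1]$.

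I expect no real obstacle here. The only care needed is to correctly read off the second row of $F(A)$ and to handle the endpoint cases $x=\pm1$ separately.
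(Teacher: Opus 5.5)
Your proof is correct and is essentially the paper's argument. The paper bounds $cx+d \geq d-|c|$ and notes that $|c|<d$ unless $A$ is singular; you instead write $cx+d=\tfrac12\big((p+r)(1+x)+(q+s)(1-x)\big)$. Both rest on the same fact: $d+c=p+r$ and $d-c=q+s$ are the column sums of $A$, so $d-|c|=\min\{p+r,\,q+s\}$, which is positive exactly when $A$ has no zero column. Your version also spells out why equality forces singularity (a vanishing column), which the paper leaves implicit.
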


\begin{proof}
Write $A=\begin{pmatrix} p & q \\ r & s \end{pmatrix}$ with $p, q, r, s \geq 0$.
By the definition of $F$ in equation \eqref{eq: definition of F}, we have $|c|=|\tfrac12(p-q+r-s)| \leq \tfrac12(p+q+r+s) = d$, and equality implies that $A$ is singular. Hence $cx+d \geq d - |c| > 0$ for any $x \in [-1,1]$.
\end{proof}

Take the principal branch of $\mathrm{Log}$ with $\mathrm{Log}(1) = 0$. By Lemma \ref{lemma: denominator is positive}, we have $q_2(i) > 0$. Since $\left| f_i^{\top}(0) \right| = \left| \frac{ q_1(i) }{ q_2(i) } \right| < 1$ by equation \eqref{eq: bounding the transpose of gi}, we can define the following map, which is holomorphic on the neighborhood of $\overline{\mathbb{D}_1}$.
\[
\ell_i(x) = \mathrm{Log} \big( q_1(i) x + q_2(i) \big). \quad \big( x \in \overline{\mathbb{D}_1}, \, i \in I \big).
\]
$\big($We remark that the proof of Theorem \ref{thm: main theorem 1} itself only uses real values; the complex extension is needed later.$\big)$

We first prove that $T: V \to V$ is well-defined as an operator. For any $x \in \overline{\mathbb{D}_1}$ and $c \in \mathbb{C}$, we have $|f_i^{\top}(0) x| < 1$ ensuring the absolute convergence of the following sum.
\begin{align} \label{eq: 0th coordinate of Tv}
\big( T v(x \, ; \, c) \big)_0
&= - \sum_{i \in I} w_i \sum_{n=1}^\infty
{f_i^{\top}(0)}^n \, \frac{(-x)^n}{n}
= \sum_{i \in I} w_i \mathrm{Log} \left( 1 + \frac{ q_1(i) x }{ q_2(i) } \right) = \sum_{i \in I} w_i \big( \ell_i(x) - \ell_i(0) \big).
\end{align}
Also, for $k \geq 1$,
\begin{flalign*}
\big( T v(x \, ; \, c) \big)_k = - \sum_{i \in I} w_i \sum_{n=1}^\infty \sum_{\ell=1}^{\min\{k,n\}} \binom{n}{\ell} \binom{k-1}{\ell-1} {f_i^{\top}(0)}^{n-\ell} \, {\big(- f_i(0) \big)}^{k-\ell} \, {f_i'(0)}^{\ell} \frac{(-x)^n}{n}.
\end{flalign*}

Fix $i \in I$, $k \in \mathbb{N}$, $x \in \overline{\mathbb{D}_1}$ and let
\[
a_{n, \ell} = \binom{n}{\ell} \binom{k-1}{\ell-1} {f_i^{\top}(0)}^{n-\ell} \, {\big(- f_i(0) \big)}^{k-\ell} \, {f_i'(0)}^{\ell} \frac{(-x)^n}{n}.
\]
Then, for any $N \in \mathbb N$ with $N \geq k$,
\begin{equation} \label{eq: exchange of sum for a}
\sum_{n=1}^N \sum_{\ell=1}^{\min\{k,n\}} a_{n, \ell} = \sum_{\ell = 1}^k \sum_{n = \ell}^N a_{n, \ell}.
\end{equation}
Now, $\sum_{n = \ell}^\infty a_{n, \ell}$ is absolutely convergent for any $1 \leq \ell \leq k$. Indeed, letting $C = \max_{i \in I} | f_i^{\top}(0) |$,
\begin{align*}
| a_{n, \ell} |
\leq \frac{n^{\ell}}{\ell!} \, \binom{k-1}{\ell-1} \, C^{n-\ell} \, {|f_i(0)|}^{k-\ell} \, {|f_i'(0)|}^{\ell} \, \frac{1}{n}
\leq \Bigg( \binom{k-1}{\ell-1} \frac{ {|f_i(0)|}^{k-\ell} {|f_i'(0)|}^{\ell} }{ \ell! } \Bigg) n^{\ell-1} C^{n-\ell}.
\end{align*}
By equation \eqref{eq: bounding the transpose of gi} we have $0 \leq C < 1$, and $\sum_{n = \ell}^\infty | a_{n, \ell} |$ is finite for any $1 \leq \ell \leq k$. Combined with equation \eqref{eq: exchange of sum for a}, this absolute convergence implies that
\begin{equation} \label{eq: absolute convergence of a}
\sum_{n=1}^\infty \sum_{\ell=1}^{\min\{k,n\}} |a_{n, \ell}| = \sum_{\ell = 1}^k \sum_{n = \ell}^\infty |a_{n, \ell}| < \infty.
\end{equation}

Therefore, we can reorder the sum and compute as follows.
\begin{align*}
\big( T v(x \, ; \, c) \big)_k
&= - \sum_{i \in I} w_i \sum_{n=1}^\infty \sum_{\ell=1}^{\min\{k,n\}}
\binom{n}{\ell}\binom{k-1}{\ell-1}
{f_i^{\top}(0)}^{n-\ell}\big(-f_i(0)\big)^{k-\ell}{f_i'(0)}^{\ell}\frac{(-x)^n}{n} \\
&= - \sum_{i \in I} w_i \sum_{\ell=1}^{k} \binom{k-1}{\ell-1} \big(-f_i(0)\big)^{k-\ell}{f_i'(0)}^{\ell}
\sum_{n=\ell}^\infty \binom{n}{\ell} {f_i^{\top}(0)}^{n-\ell}\, \frac{(-x)^n}{n} \\
&= - \sum_{i \in I} w_i \sum_{\ell=1}^{k} \binom{k-1}{\ell-1} \big(-f_i(0)\big)^{k-\ell}{f_i'(0)}^{\ell}\frac{1}{\ell}
\sum_{n=\ell}^\infty \binom{n-1}{\ell -1} {f_i^{\top}(0)}^{n-\ell} (-x)^n.
\end{align*}

For the inner sum, we have
\begin{align*}
\sum_{n=\ell}^\infty \binom{n-1}{\ell -1} {f_i^{\top}(0)}^{n-\ell} (-x)^n
= \sum_{m=0}^\infty \binom{\ell + m - 1}{ \ell -1 } {f_i^{\top}(0)}^m (-x)^{\ell+m}
= \left( \frac{ -x }{ 1 + f_i^{\top}(0)\, x } \right)^\ell.
\end{align*}

So,
\begin{align}
\big( T v(x \, ; \, c) \big)_k
&= - \sum_{i \in I} w_i \frac{1}{k} \sum_{\ell=1}^{k} \binom{k}{\ell} \big(-f_i(0)\big)^{k-\ell} \, {f_i'(0)}^{\ell}
\left( \frac{ -x }{ 1 + f_i^{\top}(0)\, x } \right)^\ell \nonumber\\
&= - \sum_{i \in I} w_i \frac{1}{k} \sum_{\ell=1}^{k} \binom{k}{\ell} \big(-f_i(0)\big)^{k-\ell}
\left( - \frac{ x\, f_i'(0) }{ 1 + f_i^{\top}(0)\, x } \right)^{\ell} \nonumber \\
&= - \sum_{i \in I} w_i \frac{1}{k} \left\{ \left( -f_i(0) - \frac{ x\, f_i'(0) }{ 1 + f_i^{\top}(0)\, x } \right)^k - \big(-f_i(0)\big)^k \right\} \nonumber \\
&= - \sum_{i \in I} w_i \frac{1}{k} \left\{ \big( - f_i(x) \big)^k - \big( - f_i(0) \big)^k \right\}. \label{eq: Tvk}
\end{align}
Here, the last line follows from direct calculation. Combining with equation \eqref{eq: 0th coordinate of Tv} we conclude
\begin{equation} \label{eq: Tv}
T v(x \, ; \, c) = \sum_{i \in I} w_i \left\{ v \bigg( f_i(x) \, ; \, \ell_i(x) \bigg) - \, v \bigg( f_i(0) \, ; \, \ell_i(0) \bigg) \right\}.
\end{equation}
Since $f_i([-1,1]) \subset [-r, r]$, we have $f_i(\overline{\mathbb{D}_1}) \subset \overline{\mathbb{D}_r}$ by Lemma \ref{lemma: real contraction implies complex contraction}. Hence $|f_i(x)| \leq r <1$. Thus, we have $T(V) \subset V$ as promised.

\smallskip
Let $\boldsymbol{v} = \sum_{i \in I} w_i v \bigg( f_i(0) \, ; \, \ell_i(0) \bigg)$, which matches the definition in the statement of Theorem \ref{thm: main theorem 1}.
For $n \geq 1$ and a word $\lbar{i}=(i_1,\dots,i_n)\in I^n$, set
\[
w_{\lbar{i}} = w_{i_1} \cdots w_{i_n},
\qquad
f_{\lbar{i}} = f_{i_n}\circ\cdots\circ f_{i_1},
\qquad
\lbar{i}^* = (i_1,\dots,i_{n-1})\in I^{n-1}.
\]
For $n=1$ we interpret as $\lbar{i}^*=\varnothing$, and $f_{\varnothing}$ is defined as the identity map.
\begin{claim}
For any $n \geq 1$,
\begin{equation} \label{eq: finite sum of T}
\sum_{j=0}^{n-1} T^j \boldsymbol{v} = \sum_{\lbar{i} \in I^n} w_{\lbar{i}} v \bigg( f_{\lbar{i}}(0); \ell_{i_n} ( f_{\lbar{i}^*}(0) ) \bigg).
\end{equation}
\end{claim}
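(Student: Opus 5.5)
We argue by induction on $n$, using the identity \eqref{eq: Tv} together with the linearity of $T$ on $V$. For $n=1$ the left-hand side of \eqref{eq: finite sum of T} is $\boldsymbol{v}$. On the right-hand side, a word $\lbar{i}=(i_1)$ has $\lbar{i}^*=\varnothing$, $f_{\varnothing}=\mathrm{id}$ and $f_{\lbar{i}}=f_{i_1}$. The right-hand side is therefore $\sum_{i} w_i\, v(f_i(0);\ell_i(0))$, which is exactly the definition of $\boldsymbol{v}$.

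For the inductive step we compute $T$ applied to the right-hand side $R_n$ of \eqref{eq: finite sum of T}. A cleaner form of the step is the recursion
\[
\sum_{j=0}^{n}T^j\boldsymbol{v}=\boldsymbol{v}+T\Big(\sum_{j=0}^{n-1}T^j\boldsymbol{v}\Big)=\boldsymbol{v}+TR_n ,
\]
which is valid because $T$ is linear on $V$ and $R_n\in V$. By \eqref{eq: Tv}, which holds for every $x\in\overline{\mathbb{D}_1}$ and every $c$ (the value of $c$ plays no role since $b_{k,0}=0$), we get for each word $\lbar{i}\in I^n$
\[
T v\big(f_{\lbar{i}}(0);\,\cdot\,\big)=\sum_{i\in I}w_i\Big\{v\big(f_i(f_{\lbar{i}}(0));\ell_i(f_{\lbar{i}}(0))\big)-v\big(f_i(0);\ell_i(0)\big)\Big\}.
\]
This uses $|f_{\lbar{i}}(0)|\le r\le 1$, which follows from Lemma \ref{lemma: real contraction implies complex contraction} or directly from $f_i([-1,1])\subset[-r,r]$.

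Next we multiply by $w_{\lbar{i}}$ and sum over $\lbar{i}\in I^n$, using $\sum_{\lbar{i}}w_{\lbar{i}}=1$. The subtracted terms add up to $\sum_i w_i v(f_i(0);\ell_i(0))=\boldsymbol{v}$, and this cancels the added $\boldsymbol{v}$ in the recursion. For the remaining terms, set $\lbar{j}=(i_1,\dots,i_n,i)\in I^{n+1}$. Then $w_{\lbar{j}}=w_{\lbar{i}}w_i$, $f_{\lbar{j}}=f_i\circ f_{\lbar{i}}$, $\lbar{j}^*=\lbar{i}$, and the last letter of $\lbar{j}$ is $i$. Hence the remaining terms are exactly $\sum_{\lbar{j}\in I^{n+1}}w_{\lbar{j}}\,v\big(f_{\lbar{j}}(0);\ell_{j_{n+1}}(f_{\lbar{j}^*}(0))\big)=R_{n+1}$, which completes the induction.

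The only delicate point is the bookkeeping of the $0$-th coordinate. Formula \eqref{eq: Tv} was derived coordinatewise: the $0$-th coordinate uses \eqref{eq: 0th coordinate of Tv} and the others use \eqref{eq: Tvk}. In both, the output is independent of the input constant $c$. So the second argument of $v$ in $R_n$ carries no information into $TR_n$; it matters only through the added $\boldsymbol{v}$, whose cancellation must be checked exactly. Everything else is reindexing of finite sums, and no convergence issues arise.
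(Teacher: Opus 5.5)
Your proposal is correct and follows essentially the same route as the paper: induction on $n$ via $\sum_{j=0}^{n}T^j\boldsymbol{v}=\boldsymbol{v}+TS_n$, then applying \eqref{eq: Tv} to each term, cancelling the subtracted terms against $\boldsymbol{v}$ using $\sum_{\lbar{i}}w_{\lbar{i}}=1$, and reindexing to words of length $n+1$. Your side remarks are accurate details the paper leaves implicit: the independence of $Tv(x;c)$ from $c$ (because $b_{k,0}=0$), and $|f_{\lbar{i}}(0)|\le r$, which guarantees \eqref{eq: Tv} applies.
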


\begin{proof}
We prove by induction on $n$. Denote the right-hand side of the equation \eqref{eq: finite sum of T} as $S_n$. For $n = 1$ the equality follows from the definition of $\boldsymbol{v}$. Assume the claim is true for $n$, then
\[
\sum_{j=0}^{n} T^j \boldsymbol{v}
=\boldsymbol{v} + T \sum_{j=0}^{n-1} T^j \boldsymbol{v}
=\boldsymbol{v} + T S_n.
\]
Using linearity of $T$ and equation \eqref{eq: Tv}, we obtain
\begin{align*}
T S_n
&=\sum_{\lbar{i}\in I^n} w_{\lbar{i}}\,
T\, v\!\bigg( f_{\lbar{i}}(0)\,;\, \ell_{i_n}\!\big( f_{\lbar{i}^*}(0)\big)\bigg)
=\sum_{\lbar{i}\in I^n} w_{\lbar{i}}
\sum_{k\in I} w_k
\bigg\{
v\big(f_k(f_{\lbar{i}}(0))\,;\,\ell_k(f_{\lbar{i}}(0))\big)-v\big(f_k(0)\,;\,\ell_k(0)\big)
\bigg\}\\
&=\sum_{\lbar{i}\in I^n}\sum_{k\in I} w_{\lbar{i}}w_k\,
v\big(f_k\circ f_{\lbar{i}}(0)\,;\,\ell_k(f_{\lbar{i}}(0))\big)
\;-\;\sum_{\lbar{i}\in I^n} w_{\lbar{i}}
\sum_{k\in I} w_k\, v\big(f_k(0)\,;\,\ell_k(0)\big).
\end{align*}
Since $\sum_{\lbar{i}\in I^n} w_{\lbar{i}}=(\sum_{i\in I} w_i)^n=1$, the second term equals $-\boldsymbol{v}$.
Therefore,
\begin{equation*}
\sum_{j=0}^{n} T^j \boldsymbol{v}
=\boldsymbol{v}+T S_n
=\sum_{\lbar{i}\in I^n}\sum_{k\in I} w_{\lbar{i}}w_k\,
v\big(f_k\circ f_{\lbar{i}}(0)\,;\,\ell_k(f_{\lbar{i}}(0))\big)
= S_{n+1}. \qedhere
\end{equation*}
\end{proof}

We now prove that $ \sum_{j=0}^\infty \left( T^j \boldsymbol{v} \right)_0$ converges. First, define $\mathscr{H}: \mathscr{P}_1(-1,1) \to \mathscr{P}(-1,1)$ by
\[
\mathscr{H}\nu = \sum_{i \in I} w_i \, ( f_i )_* \nu \quad \Big( \nu \in \mathscr{P}_1(-1,1) \Big).
\]
By Lemma \ref{lemma: small image implies contraction}, for every $i \in I$ and $x,y \in (-1, 1)$
\begin{equation} \label{eq: contraction in hyperbolic metric in the proof of main theorem}
d_{\mathrm{hyp}}( f_i(x), f_i(y) ) \leq r \, d_{\mathrm{hyp}}(x, y).
\end{equation}

\begin{claim}
For any $ \nu \in \mathscr{P}_1(-1,1)$ we have $\mathscr{H} \nu \in \mathscr P_1(-1,1)$.
\end{claim}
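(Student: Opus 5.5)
To show $\mathscr{H}\nu \in \mathscr{P}_1(-1,1)$, we must check two things: that $\mathscr{H}\nu$ is a Borel probability measure on $(-1,1)$, and that $\int d_{\mathrm{hyp}}(x,0)\, d(\mathscr{H}\nu)(x) < \infty$.

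For the first point, each $f_i$ is continuous on $[-1,1]$ and maps it into $[-r,r] \subset (-1,1)$. Hence $(f_i)_*\nu$ is a Borel probability measure on $(-1,1)$, and in fact it is supported in $[-r,r]$. Since the weights satisfy $\sum_i w_i = 1$, the convex combination $\mathscr{H}\nu$ is again a probability measure, also supported in $[-r,r]$.

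For the moment condition, the change of variables formula for push-forwards gives
\[
\int_{(-1,1)} d_{\mathrm{hyp}}(x,0)\, d(\mathscr{H}\nu)(x) = \sum_{i\in I} w_i \int_{(-1,1)} d_{\mathrm{hyp}}\big(f_i(y),0\big)\, d\nu(y).
\]
There are two ways to bound the integrand.

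The simplest uses compact support. For $|x| \le r$ we have $d_{\mathrm{hyp}}(x,0) = 2\,\mathrm{artanh}(|x|) \le 2\,\mathrm{artanh}(r) < \infty$. So each integral is at most $2\,\mathrm{artanh}(r)$, and the moment is finite. This argument does not even use the hypothesis $\nu \in \mathscr{P}_1$.

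Alternatively, one can use the triangle inequality together with the contraction estimate \eqref{eq: contraction in hyperbolic metric in the proof of main theorem}:
\[
d_{\mathrm{hyp}}\big(f_i(y),0\big) \le d_{\mathrm{hyp}}\big(f_i(y),f_i(0)\big) + d_{\mathrm{hyp}}\big(f_i(0),0\big) \le r\, d_{\mathrm{hyp}}(y,0) + 2\,\mathrm{artanh}(r).
\]
Integrating against $\nu$ gives a bound of $r\int d_{\mathrm{hyp}}(y,0)\,d\nu(y) + 2\,\mathrm{artanh}(r)$, which is finite because $\nu \in \mathscr{P}_1$. This form is the one that will feed directly into the later $W_1$-contraction argument.

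The only point requiring care is measurability of the integrand. This is immediate because $y \mapsto d_{\mathrm{hyp}}(f_i(y),0)$ is continuous on $(-1,1)$. I do not anticipate any real obstacle in this argument.
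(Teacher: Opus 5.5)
Your proposal is correct, and your second argument (triangle inequality through $f_i(0)$, the pointwise bound $d_{\mathrm{hyp}}(f_i(x),f_i(y))\le r\,d_{\mathrm{hyp}}(x,y)$, then integrating against $\nu$) is exactly the paper's proof.

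Your first argument is a genuinely different and more elementary route. Every $f_i$ maps $(-1,1)$ into $[-r,r]$, so $\mathscr{H}\nu$ is supported in $[-r,r]$, where $d_{\mathrm{hyp}}(x,0)=2\,\mathrm{artanh}|x|\le 2\,\mathrm{artanh}(r)$. This gives a moment bound independent of $\nu$ and never uses the hypothesis $\nu\in\mathscr{P}_1(-1,1)$. In fact it shows that $\mathscr{H}$ maps every Borel probability measure on $(-1,1)$ into $\mathscr{P}_1(-1,1)$.

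The paper's affine estimate uses only the Lipschitz property of $f_i$ and the finiteness of $d_{\mathrm{hyp}}(f_i(0),0)$. It would therefore survive for strict contractions of $((-1,1),d_{\mathrm{hyp}})$ whose image is not relatively compact. In the present setting, however, the compact image is available anyway; it is exactly where the contraction constant $r$ comes from. So your shortcut loses nothing.

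One small correction to your commentary: the affine form is not what feeds the later $W_1$-contraction claim. That claim pushes a coupling forward under $(x,y)\mapsto(f_i(x),f_i(y))$ and applies the pointwise contraction directly. The present claim is only needed so that $\mathscr{H}$ is a self-map of $\mathscr{P}_1(-1,1)$ when Banach's fixed point theorem is invoked.
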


\begin{proof}
Let $x \in (-1,1)$, and  $\nu \in \mathscr{P}_1(-1,1)$. Using the triangle inequality and equation \eqref{eq: contraction in hyperbolic metric in the proof of main theorem},
\[
d_{\mathrm{hyp}}\big( f_i(x),0 \big) \leq d_{\mathrm{hyp}}\big( f_i(x),f_i(0) \big) + d_{\mathrm{hyp}}\big( f_i(0),0 \big)
\leq r \, d_{\mathrm{hyp}}(x,0)+d_{\mathrm{hyp}}\big( f_i(0),0 \big).
\]
Integrating against $\nu$ and summing with weights $w_i$ yields
\begin{align*}
\int d_{\mathrm{hyp}}(x,0) \, d(\mathscr{H}\nu)
=\sum_{i\in I} w_i \int d_{\mathrm{hyp}}\big( f_i(x),0 \big) \, d\nu \leq r \int d_{\mathrm{hyp}}(x,0)\, d\nu+\sum_{i\in I} w_i \, d_{\mathrm{hyp}}\big( f_i(0),0 \big).
\end{align*}
Since $f_i(0)\in[-r,r]$, each $d_{\mathrm{hyp}}(f_i(0),0)$ is finite, and by $\nu \in \mathscr{P}_1(-1,1)$, the last expression is finite. We conclude $\mathscr{H}\nu \in \mathscr{P}_1(-1,1)$.
\end{proof}

\begin{claim} \label{claim: H is a contraction}
The map $\mathscr{H}$ is a contraction with respect to the Wasserstein-1 metric.
\end{claim}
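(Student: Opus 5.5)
We will show that for all $\nu_1, \nu_2 \in \mathscr{P}_1(-1,1)$,
\[
W_1(\mathscr{H}\nu_1, \mathscr{H}\nu_2) \leq r \, W_1(\nu_1, \nu_2),
\]
where $0<r<1$ is the constant fixed at the start of the proof of Theorem \ref{thm: main theorem 1}. The strategy is to push a coupling of $\nu_1,\nu_2$ forward through each $f_i$ and then mix with weights $w_i$. This produces an admissible coupling of $\mathscr{H}\nu_1$ and $\mathscr{H}\nu_2$, whose transport cost is controlled by the pointwise hyperbolic contraction \eqref{eq: contraction in hyperbolic metric in the proof of main theorem}.

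First, by the previous claim both $\mathscr{H}\nu_1$ and $\mathscr{H}\nu_2$ lie in $\mathscr{P}_1(-1,1)$, so $W_1(\mathscr{H}\nu_1,\mathscr{H}\nu_2)$ is defined and finite. Fix any $\eta \in \Pi(\nu_1,\nu_2)$ and set
\[
\widetilde{\eta} = \sum_{i\in I} w_i \, (f_i \times f_i)_* \eta .
\]
This is a probability measure on $(-1,1)\times(-1,1)$, because $f_i((-1,1)) \subset [-r,r]$ and $\sum_i w_i = 1$. Its marginals are $\sum_i w_i (f_i)_*\nu_j = \mathscr{H}\nu_j$ for $j=1,2$, since $\pi_j \circ (f_i\times f_i) = f_i \circ \pi_j$. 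Hence $\widetilde{\eta} \in \Pi(\mathscr{H}\nu_1,\mathscr{H}\nu_2)$.

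Next we bound the cost of $\widetilde{\eta}$ using the change of variables formula together with \eqref{eq: contraction in hyperbolic metric in the proof of main theorem}:
\[
\int d_{\mathrm{hyp}}\, d\widetilde{\eta} = \sum_{i\in I} w_i \int d_{\mathrm{hyp}}\big(f_i(x),f_i(y)\big)\, d\eta(x,y) \leq r \int d_{\mathrm{hyp}}(x,y)\, d\eta(x,y).
\]
Therefore $W_1(\mathscr{H}\nu_1,\mathscr{H}\nu_2) \leq r \int d_{\mathrm{hyp}}\, d\eta$, and taking the infimum over $\eta \in \Pi(\nu_1,\nu_2)$ gives the claimed inequality. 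Since $r<1$, $\mathscr{H}$ is a strict contraction.

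No step here is a genuine obstacle. The only points requiring a little care are measure-theoretic: checking that the pushforward of a coupling is again a coupling with the correct marginals, and that all integrals are finite. The latter holds because $d_{\mathrm{hyp}}(x,y) \leq d_{\mathrm{hyp}}(x,0) + d_{\mathrm{hyp}}(0,y)$, which is integrable for measures in $\mathscr{P}_1$.
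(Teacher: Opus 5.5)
Your proof is correct and is essentially the paper's argument. You push a coupling $\eta\in\Pi(\nu_1,\nu_2)$ forward under $f_i\times f_i$, mix with the weights $w_i$ to get a coupling of $\mathscr{H}\nu_1$ and $\mathscr{H}\nu_2$, bound its cost by $r\int d_{\mathrm{hyp}}\,d\eta$ using the hyperbolic contraction, and take the infimum over $\eta$; your remarks on finiteness of the integrals only make explicit what the paper leaves implicit.
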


\begin{proof}[Proof of Claim]
Take any $\nu_1, \nu_2 \in \mathscr{P}_1(-1,1)$. Take any coupling $\eta \in \Pi(\nu_1, \nu_2)$. We define $\Phi_i: (-1,1) \times (-1,1) \to (-1,1) \times (-1,1)$ by $\Phi_i( x,y ) = \Big( f_i(x), f_i(y) \Big)$ and let $\tau_i = (\Phi_i)_* \eta$. Let $\pi_j$ be the projection onto the $j$-th coordinate for $j = 1,2$. Since $\pi_j \circ \Phi_i = f_i \circ \pi_j$, we have $(\pi_j)_* \tau_i = (f_i)_* \nu_j$ for each $j = 1,2$. Thus for $\tau = \sum_{i \in I} w_i \tau_i$, we have $\tau \in \Pi( \mathscr{H} \nu_1, \mathscr{H} \nu_2 )$. Then,
\begin{align*}
&W_1( \mathscr{H}\nu_1, \mathscr{H}\nu_2 )
\leq \int d_{\mathrm{hyp}}(\xi, \zeta) d\tau(\xi, \zeta)
= \sum_{i \in I} w_i \int d_{\mathrm{hyp}}(\xi, \zeta) d\tau_i(\xi, \zeta) \\
&= \sum_{i \in I} w_i \int d_{\mathrm{hyp}}( f_i(x), f_i(y) ) d\eta(x,y)
\leq \sum_{i \in I} w_i \int r \; d_{\mathrm{hyp}}( x, y ) d\eta(x,y)
= r \int d_{\mathrm{hyp}}( x, y ) d\eta(x,y).
\end{align*}
By taking the infimum over $\eta$, we conclude that
\[
W_1( \mathscr{H}\nu_1, \mathscr{H}\nu_2 ) \leq r W_1( \nu_1, \nu_2 ). \qedhere
\]
\end{proof}

Since $\left( \mathscr{P}_1(-1,1), W_1 \right)$ is complete, by Banach's fixed point theorem there is $\nu^* \in \mathscr{P}_1(-1,1)$ such that $\mathscr{H} \nu^* = \nu^*$, and for any $\nu_0 \in \mathscr{P}_1(-1,1)$, the measure $\mathscr{H}^n \nu_0$ converges to $\nu^*$ in $W_1$ as $n \to \infty$. Also, any $\mu$-stationary measure on $[-1,1]$ must have support on $[-r,r] \subset (-1,1)$, and the only stationary measure is $\nu^*$ by the uniqueness of the fixed point of $\mathscr{H}$.

Consider the Dirac measure at $0$, which we denote by $\delta_0 \in \mathscr{P}_1(-1,1)$, and let $\nu_n = \mathscr{H}^n \delta_0$. By equation \eqref{eq: finite sum of T}, for any $n \geq 1$
\begin{align} \label{eq: finite sum of 0-th coordinate}
\sum_{j=0}^{n-1}\left( T^j \boldsymbol{v} \right)_0
=
\sum_{i \in I} w_i \int_{(-1,1)} \log{ \bigg( q_1(i) x + q_2(i) \bigg) } d\nu_{n-1}(x).
\end{align}
By Lemma \ref{lemma: denominator is positive}, we have $q_1(i) x + q_2(i) > 0$ for all $x \in [-1,1]$. Thus the integrand is continuous and bounded. Now, $\nu_n \to \nu^*$ in $W_1$ implies that $\nu_n$ converges weakly to $\nu^*$ by \cite[Theorem 6.9]{Villani}. Hence, as $n \to \infty$, the limit of equation \eqref{eq: finite sum of 0-th coordinate} exists, and
\begin{align} \label{eq: 0-th coordinate}
\sum_{n=0}^\infty \big( T^n \boldsymbol{v} \big)_0
= \sum_{i \in I} w_i \int_{(-1,1)} \log{ \bigg( q_1(i) x + q_2(i) \bigg) } d\nu^*(x).
\end{align}

Finally, for $x \in [-1,1]$ and $A_i = \begin{pmatrix} a(i) & b(i) \\ c(i) & d(i) \end{pmatrix}$ we have
\[
\left\| \frac12 A_i\binom{1+x}{1-x} \right\|_1
=\frac12 \big( (a(i)+c(i))(1+x)+(b(i)+d(i))(1-x) \big)=q_1(i)x+q_2(i),
\]
so \eqref{eq: 0-th coordinate} coincides with the Furstenberg--Kifer formula in equation \eqref{eq: integral form by Furstenberg Kifer} by the uniqueness of the stationary measure.

Therefore, the series $\sum_{n=0}^\infty (T^n \boldsymbol{v})_0$ converges and equals $\lambda$.
\end{proof}

\begin{remark}
The argument above also proved that for $k \geq 1$,
\begin{align*}
\sum_{j=0}^{n-1} \left( T^j \boldsymbol{v} \right)_k
=
\frac{(-1)^{k-1}}{k} \int_{(-1,1)} x^k d\nu_n(x)
\xrightarrow[]{n \to \infty} \frac{(-1)^{k-1}}{k} \int_{(-1,1)} x^k d\nu^*(x).
\end{align*}
\end{remark}

\section{Proof of Theorem \ref{thm: main theorem 2}} \label{section: proof of main theorem 2}

\noindent\textbf{Overview of this section.}
This section gives a quantitative approximation of
\[
\lambda=\sum_{n=0}^{\infty}(T^n\boldsymbol v)_0
\]
by separating the error into two parts of different natures: the tail of the Neumann series, and the truncation error coming from replacing $T$ by a finite matrix $T_m$. The tail $\sum_{k \geq n}(T^k\boldsymbol v)_0$ is handled in Subsection \ref{subsection: Cutting the tail of the series} using the Wasserstein contraction of the operator $\mathscr{H}$. The truncation error is more delicate. Subsection~\ref{subsection: Integral representation} derives integral formulas for $T$ and $T_m$, where the cancellations become visible. Subsection \ref{subsection: Integral representation for iterates} then lifts these formulas to $T^n$ and $T_m^n$. Finally, Subsection \ref{subsection: Integral operators and telescoping} introduces the relevant holomorphic integral operators, proves their oscillation bounds, and combines them with a telescoping identity to obtain an explicit estimate for $\sum_{j=0}^{N-1}\big((T^j-T_m^j)\boldsymbol v\big)_0$.

\subsection{Cutting the tail of the series} \label{subsection: Cutting the tail of the series}

\begin{proposition} \label{prop: cutting the tail of the Neumann series}
Under the assumptions of Theorem \ref{thm: main theorem 1}, suppose $0 < r < 1$ satisfies $f_i([-1,1]) \subset [-r,r]$ for every $i \in I$. Let
\[
E = \frac{1}{ 1 - r } \left( \sum_{i \in I} w_i \frac{ \left| f_i^{\top}(0) \right|}{ 1 + \sqrt{ 1 - {\left| f_i^{\top}(0) \right|}^2 } } \right) \left( \sum_{i \in I} w_i d_{\mathrm{hyp}}\big( f_i(0), 0 \big) \right).
\]
Then, for any $n \in \mathbb N$,
\[
\Bigg| \sum_{k=0}^\infty \big( T^k \boldsymbol{v} \big)_0 - \sum_{k=0}^{n-1} \big( T^k \boldsymbol{v} \big)_0 \Bigg| \leq E r^{n-1}.
\]
\end{proposition}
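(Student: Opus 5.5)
From equations (eq: finite sum of 0-th coordinate) and (eq: 0-th coordinate) we have
\[
\sum_{k=0}^\infty (T^k\boldsymbol v)_0-\sum_{k=0}^{n-1}(T^k\boldsymbol v)_0=\sum_{i\in I}w_i\left(\int g_i\,d\nu^*-\int g_i\,d\nu_{n-1}\right),\qquad g_i(x)=\log\bigl(q_1(i)x+q_2(i)\bigr).
\]
So the task reduces to two bounds. First, $g_i$ is Lipschitz with respect to $d_{\mathrm{hyp}}$ with an explicit constant $L_i$. Second, $W_1(\nu_{n-1},\nu^*)$ is at most a constant times $r^{n-1}$. Kantorovich--Rubinstein duality (or simply integrating against an optimal coupling, which only needs the Lipschitz bound) will then give $|\int g_i\,d\nu^*-\int g_i\,d\nu_{n-1}|\le L_i\,W_1(\nu_{n-1},\nu^*)$.

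\textbf{Lipschitz constant.} Write $g_i(x)=\log q_2(i)+\log(1+tx)$ with $t=f_i^{\top}(0)=q_1(i)/q_2(i)$ and $|t|<1$. The hyperbolic Lipschitz constant is
\[
L_i=\sup_{x\in(-1,1)}\frac{|g_i'(x)|}{2/(1-x^2)}=\sup_{x\in(-1,1)}\frac{|t|(1-x^2)}{2(1+tx)}.
\]
Assume $t>0$ by symmetry. Setting the derivative of $(1-x^2)/(1+tx)$ to zero gives $tx^2+2x+t=0$, so $x=(-1+\sqrt{1-t^2})/t$. At this point $1+tx=\sqrt{1-t^2}$ and $1-x^2=-2x/t$.

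Evaluating, the supremum equals
\[
\frac{1-\sqrt{1-t^2}}{t}=\frac{t}{1+\sqrt{1-t^2}}.
\]
This matches the first factor in $E$, so $L_i=|f_i^{\top}(0)|/(1+\sqrt{1-|f_i^{\top}(0)|^2})$. Verifying this short calculus step is the only real computation.

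\textbf{Wasserstein estimate.} By Claim \ref{claim: H is a contraction}, $W_1(\nu_{n-1},\nu^*)\le r^{n-1}W_1(\delta_0,\nu^*)$. The standard Banach fixed-point a priori bound gives
\[
W_1(\delta_0,\nu^*)\le\frac{1}{1-r}\,W_1(\delta_0,\mathscr H\delta_0).
\]
Since every coupling with a Dirac mass is the product coupling,
\[
W_1(\delta_0,\mathscr H\delta_0)=\int d_{\mathrm{hyp}}(0,y)\,d(\mathscr H\delta_0)(y)=\sum_i w_i\,d_{\mathrm{hyp}}(f_i(0),0).
\]

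\textbf{Conclusion.} Combining the pieces gives
\[
\Bigl|\sum_{k\ge n}(T^k\boldsymbol v)_0\Bigr|\le\Bigl(\sum_i w_iL_i\Bigr)\frac{r^{n-1}}{1-r}\sum_i w_i\,d_{\mathrm{hyp}}(f_i(0),0)=E\,r^{n-1},
\]
which is the claimed bound. The main obstacle is only the explicit optimization for $L_i$. The remaining subtlety is justifying the duality inequality for Lipschitz functions that are unbounded on $(-1,1)$. This holds here because $g_i$ is actually bounded on $[-1,1]$ and all measures involved are supported in $[-r,r]$.
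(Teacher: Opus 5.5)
Your proof is correct and is essentially the paper's argument: the same reduction to $\sum_i w_i\bigl(\int g_i\,d\nu^*-\int g_i\,d\nu_{n-1}\bigr)$, the same hyperbolic Lipschitz constant, and the same $W_1$-contraction of $\mathscr{H}$. Your a priori bound $W_1(\delta_0,\nu^*)\le (1-r)^{-1}W_1(\delta_0,\mathscr{H}\delta_0)$ is just a repackaging of the paper's telescoping estimate $W_1(\nu^*,\nu_{n-1})\le\sum_{k\ge n-1}W_1(\nu_{k+1},\nu_k)$.

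There is one arithmetic slip in the calculus step. At the critical point, $tx^2+2x+t=0$ gives $1+x^2=-2x/t$, not $1-x^2=-2x/t$. The correct value $1-x^2=2(t+x)/t=2\sqrt{1-t^2}\,\bigl(1-\sqrt{1-t^2}\bigr)/t^2$ does yield your stated supremum $\bigl(1-\sqrt{1-t^2}\bigr)/t$, whereas your expression would introduce a spurious extra factor $1/\sqrt{1-t^2}$.
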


We need the following lemma.
\begin{lemma} \label{lemma: hyperbolic lipschitz constant}
Let $g \in C^1(-1, 1)$. Define
\[
L_g = \sup_{x \in (-1,1)} \frac{1-x^2}{2} \left| g'(x) \right|.
\]
If $L_g$ is finite, then for any $\nu_1, \nu_2 \in \mathscr{P}_1(-1,1)$,
\[
\left| \int g d\nu_1 - \int g d\nu_2 \right| \leq L_g \, W_1(\nu_1, \nu_2).
\]
\end{lemma}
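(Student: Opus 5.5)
The estimate is a Lipschitz-duality statement: I will show that $g$ is $L_g$-Lipschitz with respect to $d_{\mathrm{hyp}}$, and then integrate this bound against an arbitrary coupling.

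\textbf{Step 1: pointwise Lipschitz bound.} For $x,y\in(-1,1)$, the fundamental theorem of calculus and the integral representation \eqref{eq: integral representation of hyperbolic metric} give
\[
|g(x)-g(y)| = \left|\int_y^x g'(t)\,dt\right| \le \int_{\min\{x,y\}}^{\max\{x,y\}} \frac{1-t^2}{2}|g'(t)|\cdot\frac{2}{1-t^2}\,dt \le L_g\, d_{\mathrm{hyp}}(x,y).
\]
This uses only that $g\in C^1(-1,1)$, so $g'$ is continuous on the compact segment between $x$ and $y$.

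\textbf{Step 2: integrability.} Taking $y=0$ in Step 1 gives $|g(x)|\le |g(0)|+L_g\,d_{\mathrm{hyp}}(x,0)$. Since $\nu_1,\nu_2\in\mathscr P_1(-1,1)$, the function $g$ is therefore integrable against both measures, so both integrals in the statement are finite.

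\textbf{Step 3: integrate against a coupling.} Take any $\eta\in\Pi(\nu_1,\nu_2)$. By the marginal conditions,
\[
\int g\,d\nu_1-\int g\,d\nu_2=\int (g(x)-g(y))\,d\eta(x,y).
\]
Splitting the integrand this way is legitimate because each of $g\circ\pi_1$ and $g\circ\pi_2$ is $\eta$-integrable by Step 2. Applying Step 1 inside the integral, the absolute value is at most $L_g\int d_{\mathrm{hyp}}(x,y)\,d\eta$.

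\textbf{Step 4: optimize over couplings.} Taking the infimum over $\eta$ in Step 3 yields the claim $\left|\int g\,d\nu_1-\int g\,d\nu_2\right|\le L_g\,W_1(\nu_1,\nu_2)$.

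The only point needing care is the integrability in Step 2, which justifies splitting the integral in Step 3. It follows directly from the growth bound in Step 2 together with the definition of $\mathscr P_1(-1,1)$.
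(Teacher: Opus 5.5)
Your proposal is correct and follows essentially the same argument as the paper: a pointwise bound $|g(x)-g(y)|\le L_g\, d_{\mathrm{hyp}}(x,y)$ from the integral representation of $d_{\mathrm{hyp}}$, integrability via $y=0$, integration against an arbitrary coupling, and an infimum over couplings. Your explicit justification of splitting $\int (g(x)-g(y))\,d\eta$ via $\eta$-integrability of $g\circ\pi_1$ and $g\circ\pi_2$ spells out a step the paper leaves implicit.
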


\begin{proof}
First, for any $x, y \in (-1, 1)$,
\begin{align*}
\left| g(x) - g(y) \right|
= \left| \int_x^y g'(t) dt \right|
= \left| \int_x^y \frac{1 - t^2}{2} g'(t) \frac{2 dt}{1-t^2} \right|
\leq L_g \, d_{\mathrm{hyp}}(x, y).
\end{align*}
In particular, $\int g d\nu$ exists for $\nu \in \mathscr{P}_1(-1,1)$ by letting $y = 0$. Take any coupling $\eta \in \Pi(\nu_1, \nu_2)$. We have
\[
\left| \int g d\nu_1 - \int g d\nu_2 \right|
= \left| \int \Big( g(x) - g(y) \Big) d\eta(x,y) \right|
\leq L_g \, \int d_{\mathrm{hyp}}(x, y) d\eta(x,y).
\]
Taking the infimum over $\eta$ proves the desired inequality.
\end{proof}

\begin{proof}[Proof of Proposition \ref{prop: cutting the tail of the Neumann series}]
We recall the notations: for each $i\in I$, let $F_i = F(A_i)$ and
\begin{equation*}
F_i=
\begin{pmatrix}
p_1(i) & p_2(i) \\
q_1(i) & q_2(i)
\end{pmatrix}, \qquad
\ell_i(x) = \log{ \big( q_1(i) x + q_2(i) \big) } \quad \big( x \in (-1,1) \big).
\end{equation*}
A direct calculus computation shows that for $|a|<1$, $\sup_{x\in(-1,1)}\frac{1-x^2}{1+ax}=\frac{2}{1+\sqrt{1-a^2}}$. Hence,
\[
L_{\ell_i} = \sup_{x \in (-1,1)} \frac{1-x^2}{2} \left| \frac{q_1(i)}{q_1(i) x + q_2(i)} \right|
= \frac{1}{2} \cdot \left| f_i^{\top}(0) \right| \sup_{x \in (-1,1)} \frac{ 1 - x^2 }{ 1 + \left| f_i^{\top}(0) \right| x}
= \frac{ \left| f_i^{\top}(0) \right|}{ 1 + \sqrt{ 1 - {\left| f_i^{\top}(0) \right|}^2 } }.
\]
By equation \eqref{eq: finite sum of 0-th coordinate}, \eqref{eq: 0-th coordinate}, and Lemma \ref{lemma: hyperbolic lipschitz constant},
\begin{align*}
\Bigg| \sum_{k=0}^\infty \big( T^k \boldsymbol{v} \big)_0\hspace{-2pt} - \hspace{-2pt}\sum_{k=0}^{n-1} \big( T^k \boldsymbol{v} \big)_0 \Bigg|
= \left| \sum_{i \in I} w_i \left\{ \int\hspace{-2pt}\ell_i(x) d\nu^*(x) - \hspace{-4pt}\int \hspace{-2pt}\ell_i(x) d\nu_{n-1}(x) \hspace{-2pt}\right\}\hspace{-2pt}\right|
\leq \sum_{i \in I} w_i L_{\ell_i} W_1( \nu^*, \nu_{n-1} ).
\end{align*}

Recall that $\mathscr{H}\eta = \sum_{i \in I} w_i \, ( f_i )_* \eta$ for $\eta \in \mathscr{P}_1(-1,1)$. By Claim \ref{claim: H is a contraction}, we have
\[
W_1( \mathscr{H}\nu, \mathscr{H}\nu' ) \leq r W_1( \nu, \nu'). \qquad \left( \nu, \nu' \in \mathscr{P}_1(-1,1). \right)
\]
Let $\nu^*$ be the unique fixed point of $\mathscr{H}$ obtained in the proof of Theorem \ref{thm: main theorem 1}. Recall that we defined $\nu_k = \mathscr{H}^k \delta_0$ for $k \in \mathbb{N}$, where $\delta_0$ is the Dirac measure at $0$. Since $\nu_k\to \nu^*$ in $W_1$, using triangle inequality for $W_1( \nu_m, \nu_{n-1})$ and then letting $m \to \infty$ yields
\[
W_1( \nu^*, \nu_{n-1}) \leq \sum_{k=n-1}^{\infty} W_1(\nu_{k+1}, \nu_k).
\]
Then,
\begin{align*}
W_1( \nu^*, \nu_{n-1} )
\leq \sum_{k = n-1}^\infty W_1( \mathscr{H}^k \nu_1, \mathscr{H}^k \delta_0 ) 
\leq \sum_{k = n-1}^\infty r^k W_1( \nu_1, \delta_0 )
= \frac{ r^{n-1} }{ 1 - r } W_1( \nu_1, \delta_0 ).
\end{align*}
Since $\delta_0$ is a Dirac measure, we have
$W_1(\nu_1,\delta_0)=\int d_{\mathrm{hyp}}(x,0)\,d\nu_1(x)
=\sum_{i\in I} w_i\, d_{\mathrm{hyp}}(f_i(0),0)$.

In conclusion,
\begin{align*}
\Bigg| \sum_{k=0}^\infty \big( T^k \boldsymbol{v} \big)_0 - \sum_{k=0}^{n-1} \big( T^k \boldsymbol{v} \big)_0 \Bigg|
&\leq \left( \sum_{i \in I} w_i L_{\ell_i} \right) W_1(\nu^*, \nu_{n-1}) \\
&\leq \left( \sum_{i \in I} w_i \frac{ \left| f_i^{\top}(0) \right|}{ 1 + \sqrt{ 1 - {\left|f_i^{\top}(0) \right|}^2 } } \right) \left( \sum_{i \in I} w_i d_{\mathrm{hyp}}\big( f_i(0), 0 \big) \right) \frac{ r^{n-1} }{ 1 - r }. \qedhere
\end{align*}
\end{proof}

\subsection{Integral representation} \label{subsection: Integral representation}

In this subsection, we begin to analyze the difference between $T$ and its finite approximation. Although for each fixed $k$ the series for $\big(Tv(x;c)\big)_k$ is absolutely convergent, termwise absolute bounds are generally useless for controlling the operator uniformly in $k$: the resulting bounds typically blow up as $k \to \infty$.  The integral representations below capture cancellations in the oscillatory combinatorial sums.

From the infinite matrix $T = (b_{k, n})_{k, n \in \mathbb{N}_0}$ and a natural number $m$, define $T_m = (c_{k,n})_{k,n \in \mathbb{N}_0}$ by $c_{k,n} = b_{k,n}$ when $k, n \leq m-1$ and $0$ otherwise. Let $0 < r <1$ be the constant satisfying $f_i([-1,1]) \subset [-r,r]$ for all $i \in I$. We denote by $\mathrm{i}$ the square root of $-1$.

\begin{proposition} \label{prop: integral representation}
Let $x \in \mathbb{D}_r$ and $c \in \mathbb{C}$. We have for any $k \geq 1$,
\begin{equation} \label{eq: integral representation of T}
\bigg( T v( x \, ; \, c ) \bigg)_k
=
- \sum_{i \in I} w_i \frac{1}{2\pi \mathrm{i}} \oint_{|z| = r} \frac{ 1 }{ 1 - z } \frac{ \left( - f_i \left( \frac{x}{ z } \right) \right)^k }{ k } dz,
\end{equation}
and for any $1 \leq k \leq m-1$,
\begin{equation} \label{eq: integral representation of Tm}
\bigg( T_m v( x \, ; \, c ) \bigg)_k
=
- \sum_{i \in I} w_i \frac{1}{2\pi \mathrm{i}} \oint_{|z| = r} \frac{ 1 - z^{m-1} }{ 1 - z } \frac{ \left( - f_i \left( \frac{x}{ z } \right) \right)^k }{ k } dz.
\end{equation}
\end{proposition}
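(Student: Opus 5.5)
The strategy is to treat the infinite matrix row $k$ as a generating-function coefficient extraction. Fix $k \geq 1$ and $i \in I$. For $n \geq 1$, define
\[
g_{k,n}^{(i)} = \sum_{\ell=1}^{\min\{k,n\}} \binom{n}{\ell}\binom{k-1}{\ell-1} {\big(f_i^{\top}(0)\big)}^{n-\ell}\big(-f_i(0)\big)^{k-\ell}{f_i'(0)}^{\ell},
\]
so that $b_{k,n} = \sum_i w_i g^{(i)}_{k,n}$. The first step is to find a function $\Phi_k^{(i)}(y) = \sum_{n\geq 0} \beta_{k,n}^{(i)} y^n$ whose coefficients are $\beta^{(i)}_{k,n} = -g^{(i)}_{k,n}(-1)^n/n$ for $n \geq 1$. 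Then $(Tv(x;c))_k = \sum_i w_i \sum_{n\geq1}\beta^{(i)}_{k,n} x^n$.

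The computation leading to \eqref{eq: Tvk} already identifies this function: for $|y|\leq 1$,
\[
\sum_{n\ge1} g^{(i)}_{k,n}\Big(-\tfrac{(-y)^n}{n}\Big) = -\tfrac1k\Big\{\big(-f_i(y)\big)^k - \big(-f_i(0)\big)^k\Big\}.
\]
That derivation is formal in $y$ once absolute convergence is checked. It is valid for $|y|\le 1$, hence by analyticity on a neighbourhood of $\overline{\mathbb D_1}$, because the pole of $f_i$ lies outside $[-1,1]$ and $f_i(\overline{\mathbb D_1})\subset\overline{\mathbb D_r}$ by Lemma~\ref{lemma: real contraction implies complex contraction}. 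So $G_k^{(i)}(y) := -\frac1k(-f_i(y))^k$ is holomorphic on a disc of radius $R>1$, and its Taylor coefficients of order $n\geq1$ are exactly the entries $-g^{(i)}_{k,n}(-1)^n/n$. The constant term is irrelevant, since $b_{k,0}=0$.

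Next I would write the row sum as a Hadamard-type product via Cauchy's formula. For $x\in\mathbb D_r$, choose the contour $|z|=r$, on which $|x/z|<1$, so that $G_k^{(i)}(x/z)$ is holomorphic in $z$ on a neighbourhood of the circle (indeed on $|z|>|x|/R$). Expanding $G_k^{(i)}(x/z)=\sum_{n\ge0}\gamma_n x^n z^{-n}$ and $\frac{1}{1-z}=\sum_{j\geq 0} z^j$ (valid since $r<1$), both series converge uniformly on $|z|=r$, with the first geometric in $|x|/r<1$. Integrating termwise, $\frac{1}{2\pi\mathrm i}\oint_{|z|=r}\frac{1}{1-z}\,z^{-n}\,dz$ picks out the coefficient of $z^{n-1}$ in $\sum_j z^j$, which is $1$ for $n\geq1$ and $0$ for $n=0$. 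Hence
\[
\frac{1}{2\pi\mathrm i}\oint_{|z|=r}\frac{1}{1-z}\,G_k^{(i)}(x/z)\,dz = \sum_{n\ge1}\gamma_n x^n,
\]
and the constant term $\frac1k(-f_i(0))^k$ drops out automatically. Summing over $i$ with weights $w_i$ gives \eqref{eq: integral representation of T}, after checking that it agrees with \eqref{eq: Tvk} evaluated at $x$.

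For $T_m$, the $k$-th row for $1\le k\le m-1$ keeps only the columns $1\le n\le m-1$. Replacing $\frac1{1-z}$ by $\frac{1-z^{m-1}}{1-z}=\sum_{j=0}^{m-2}z^j$, the same residue computation gives weight $1$ exactly for $1\le n\le m-1$ and $0$ otherwise. This yields \eqref{eq: integral representation of Tm}, using that column $n=0$ contributes nothing because $b_{k,0}=0$.

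The main obstacle is justifying the coefficient identification rigorously on the needed domain: namely, that $\sum_n g_{k,n}(-(-y)^n/n)$ equals the Taylor expansion of $G_k$ at $0$ with the radius of convergence exceeding $1$, so that substituting $y=x/z$ with $|x/z|<1$ is legitimate. The absolute convergence bound \eqref{eq: absolute convergence of a} for $|y|\le1$ already gives this, so the remaining work is just bookkeeping of uniform convergence in $z$ on the circle.
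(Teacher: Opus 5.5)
Your argument is correct. For \eqref{eq: integral representation of T} it is the paper's argument: the paper combines \eqref{eq: Tvk} with Lemma~\ref{lemma: coboundary property}, and the proof of that lemma is exactly your termwise Cauchy extraction against $\frac{1}{1-z}=\sum_{j\ge0}z^j$.

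For \eqref{eq: integral representation of Tm}, however, you take a genuinely different and much shorter route. The paper first computes $\big((T-T_m)v(x;c)\big)_k$ head-on:
it reorders the double sum, evaluates the negative-binomial tail $\sum_{n\ge m}\binom{n-1}{\ell-1}y_i^{n-\ell}$ by coefficient extraction, and performs a summation by parts. It then writes $\binom{m-1}{\ell-1}(1-y_i)^{\ell-1}y_i^{m-\ell}$ as a contour integral over the shifted circle $\Gamma_i$, resums, and uses the M\"obius identity $f_i(0)+\big(f_i(x)-f_i(0)\big)/z=f_i\big(x/\gamma_i(z)\big)$. After changing variables back to $|z|=r$, it obtains the $T_m$ formula by subtracting from the $T$ formula.

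You instead note that \eqref{eq: Tvk} equates a power series in $x$, absolutely convergent on $\overline{\mathbb D_1}$, with $\sum_i w_i\big(G_k^{(i)}(x)-G_k^{(i)}(0)\big)$. By uniqueness of Taylor coefficients, the weighted row entries $-b_{k,n}(-1)^n/n$ are then exactly the Taylor coefficients of $\sum_i w_i G_k^{(i)}$. For $1\le k\le m-1$, row $k$ of $T_m$ is row $k$ of $T$ with the columns $n\ge m$ deleted. So integrating against the polynomial kernel $\frac{1-z^{m-1}}{1-z}=\sum_{j=0}^{m-2}z^j$ returns precisely the degree-$(m-1)$ Taylor polynomial, with the constant term dropping out as before.

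This is just Lemma~\ref{lemma: coboundary property} with a different kernel. It makes transparent why the restriction $k\le m-1$ is needed, and it shows more generally that any column truncation corresponds to a polynomial kernel. The paper's longer computation yields, in passing, explicit binomial closed forms for the row tails such as \eqref{eq: T - Tm intermediate}, but these are not used later.
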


\begin{proof}
Since $|x|<r$ and $C:=\max_i|f_i^{\top}(0)|<1$, we have $|f_i^{\top}(0)x|\leq Cr<1$. By a similar argument to equation \eqref{eq: absolute convergence of a}, the double series in $n,\ell$ defining $\big((T-T_m)v(x;c)\big)_k$ is absolutely convergent, and we may reorder the sums. We have for $1 \leq k \leq m-1$,
\begin{align*} 
\bigg( ( T - {T_m} ) v( x \, ; \, c ) \bigg)_k
&= - \sum_{i \in I} w_i \sum_{n=m}^\infty \sum_{\ell=1}^{\min\{k,n\}} \binom{n}{\ell} \binom{k-1}{\ell-1} {f_i^{\top}(0)}^{n-\ell} \, {\big(- f_i(0) \big)}^{k-\ell} \, {f_i'(0)}^{\ell} \frac{(-x)^n}{n} \\
&= - \sum_{i \in I} w_i \frac{1}{k} \sum_{\ell=1}^{k} \binom{k}{\ell} {\big(- f_i(0) \big)}^{k-\ell} \, {f_i'(0)}^{\ell} (-x)^\ell \sum_{n=m}^\infty \binom{n-1}{\ell -1} \left( - f_i^{\top}(0)x \right)^{n-\ell}\!\!\!.
\end{align*}
Let $y = y_i = - f_i^{\top}(0)x$. We claim that
\[
\sum_{n=m}^\infty \binom{n-1}{\ell -1} \left( - f_i^{\top}(0)x \right)^{n-\ell} = \sum_{n=m}^\infty \binom{n-1}{\ell-1} y^{n-\ell} = \frac{1}{(1-y)^\ell} \sum_{j=0}^{\ell-1} \binom{m-1}{j} (1-y)^j y^{m-1-j}.
\] 
Indeed, by using coefficient extraction (where $[\xi^j] g(\xi)$ denotes the coefficient of $\xi^j$ for an analytic function $g$), we have $\binom{n-1}{\ell-1} = [\xi^{\ell-1}] (1+\xi)^{n-1}$. Since $x \in \mathbb{D}_r$ and $|f_i^\top(0)| < 1$, we have $y \in \mathbb{D}_r$, so for small enough $\xi$,
\begin{align*}
\sum_{n=m}^\infty \binom{n-1}{\ell-1} y^{n-\ell}
&= [\xi^{\ell-1}] \sum_{n=m}^\infty y^{n-\ell} (1+\xi)^{n-1}
= [\xi^{\ell-1}] y^{m-\ell} (1+\xi)^{m-1} \sum_{t=0}^\infty \big( y(1+\xi) \big)^t \\
&= [\xi^{\ell-1}] \frac{ y^{m-\ell} (1+\xi)^{m-1} }{ 1 - y(1+\xi) }
= [\xi^{\ell-1}] \frac{ y^{m-\ell} }{ 1 - y } \left( \sum_{j = 0}^{m-1} \binom{m-1}{j} \xi^j \right) 
\sum_{s = 0}^\infty \left( \frac{y}{1-y} \right)^s \xi^s \\
&= \frac{1}{(1-y)^\ell} \sum_{j=0}^{\ell-1} \binom{m-1}{j} (1-y)^j y^{m-1-j}.
\end{align*}

Thus, letting $a_i = - f_i(0)$ and $b_i = \frac{ - f_i'(0) x }{ 1 - y_i } = - \frac{f_i'(0)x}{ 1 + f_i^{\top}(0)x } = - f_i(x) + f_i(0)$,
\begin{align*}
\bigg( ( T - {T_m} ) v( x \, ; \, c ) \bigg)_k
= - \sum_{i \in I} w_i \frac{1}{k} \sum_{\ell=1}^{k} \binom{k}{\ell} {a_i}^{k-\ell} \, {b_i}^{\ell} \sum_{j=0}^{\ell-1} \binom{m-1}{j} (1-y_i)^j {y_i}^{m-1-j}.
\end{align*}

Let $d_0^{(i)}= 0$, and for each $i \in I$ and $1 \leq \ell \leq k$, let
\[
U_\ell^{(i)} = \sum_{s=\ell}^k \binom{k}{s} {a_i}^{k-s} {b_i}^s, \quad d_\ell^{(i)} = \sum_{j=0}^{\ell-1} \binom{m-1}{j} \big( 1-y_i \big)^j {y_i}^{m-1-j}.
\]
Then, via summation by parts,
\begin{align}
\bigg( ( T - {T_m} ) v( x \, ; \, c ) \bigg)_k
&= - \sum_{i \in I} w_i \frac{1}{k} \sum_{\ell=1}^{k} \binom{k}{\ell} {a_i}^{k-\ell} {b_i}^\ell d_\ell^{(i)} 
= - \sum_{i \in I} w_i \frac{1}{k} \sum_{\ell=1}^{k} U_\ell^{(i)} \big( d_\ell^{(i)} - d_{\ell -1}^{(i)} \big) \nonumber \\
&= - \sum_{i \in I} w_i \frac{1}{k} \sum_{\ell=1}^{k} U_\ell^{(i)} \binom{m-1}{\ell-1} \big( 1-y_i \big)^{\ell-1} {y_i}^{m-\ell}. \label{eq: T - Tm intermediate}
\end{align}

Fix $i \in I$. Define
\[
\gamma_i(z) = y_i + \big( 1-y_i \big)z.
\]
Let $\Gamma_i = \{ z \in \mathbb{C} \, \, ; \, \, | \gamma_i(z) | = r \}$ be oriented positively. Since $\gamma_i(z)$ is affine in $z$, $\Gamma_i$ is a circle. Since $|y_i| < r < 1$, the contour $\Gamma_i$ encloses $0$ but not $1$. We have
\[
\binom{m-1}{\ell-1} ( 1 - y_i )^{\ell-1} {y_i}^{m-\ell}
=
[z^{\ell-1}] \big( y_i + (1-y_i)z \big)^{m-1}
= \frac{1}{2\pi\mathrm{i}} \oint_{\Gamma_i} \gamma_i(z)^{m-1} z^{-\ell} dz.
\]
Then, equation \eqref{eq: T - Tm intermediate} becomes
\begin{align*}
- \sum_{i \in I} \frac{w_i}{k} \sum_{\ell=1}^{k} U_\ell^{(i)} \binom{m-1}{\ell-1} \big( 1-y_i \big)^{\ell-1} {y_i}^{m-\ell}
= - \sum_{i \in I} \frac{w_i}{k} \sum_{\ell=1}^{k} U_\ell^{(i)} \frac{1}{2\pi\mathrm{i}} \oint_{\Gamma_i} \gamma_i(z)^{m-1} z^{-\ell} dz.
\end{align*}
Furthermore,
\begin{align*}
\sum_{\ell = 1}^k U_\ell^{(i)} z^{-\ell}
&= \sum_{\ell = 1}^k \sum_{n=\ell}^k \binom{k}{n} {a_i}^{k-n} {b_i}^n z^{-\ell}
= \sum_{n=1}^k \binom{k}{n} {a_i}^{k-n} {b_i}^n \sum_{\ell=1}^n z^{-\ell}
= \sum_{n=0}^k \binom{k}{n} {a_i}^{k-n} {b_i}^n \frac{1 - z^{-n} }{ z-1 } \\
&= \frac{ \big( a_i + b_i \big)^k }{z-1} - \frac{ \big( a_i + b_i/z \big)^k }{z-1}
= \frac{ \big( - f_i(x) \big)^k }{ z-1 } - \frac{ \big\{ - f_i(0) - \big( f_i(x) - f_i(0) \big) z^{-1} \big\}^k }{z-1}.
\end{align*}

Therefore,
\begin{align*}
\bigg( \! ( T - {T_m} ) v( x \, ; \, c ) \! \bigg)_k \!
= - \sum_{i \in I} w_i \frac{1}{2\pi k \mathrm{i}} \oint_{\Gamma_i} \gamma_i(z)^{m-1} \frac{ \big( - f_i(x) \big)^k - \big\{ \!- f_i(0) - \! \big( f_i(x) - f_i(0) \big) z^{-1} \big\}^k }{ z-1 } dz.
\end{align*}
Since the first term $\big($the one with $( - f_i(x))^k$$\big)$ is holomorphic inside $\Gamma_i$, its integral is $0$.

Note that for $z \in \Gamma_i$, we have $\gamma_i(z) \ne 0$. Letting $\zeta = \gamma_i(z)$, we have $z = \frac{ \zeta - y_i }{ 1 - y_i }$, and
\[
f_i(0) + \frac{ f_i(x) - f_i(0) }{ z } = f_i(0) + \frac{ \big( f_i(x) - f_i(0) \big) \big( 1 - y_i \big) }{ \zeta - y_i }.
\]
Recall $f_i(t) = \frac{p_1(i) t + p_2(i)}{q_1(i) t + q_2(i)}$. A direct algebraic simplification (using $y_i = - \frac{q_1(i)}{q_2(i)}x$) gives
\[
f_i(0) + \frac{ f_i(x) - f_i(0) }{ z }
=
\frac{ p_1(i) x + p_2(i) \zeta }{ q_1(i) x + q_2(i) \zeta }
=
f_i \left( \frac{x}{\zeta} \right)
=
f_i \left( \frac{x}{ \gamma_i(z) } \right).
\]

Therefore,
\begin{equation*}
\bigg( ( T - {T_m} ) v( x \, ; \, c ) \bigg)_k = - \sum_{i \in I} w_i \frac{1}{2\pi\mathrm{i}} \oint_{\Gamma_i} \frac{ \gamma_i(z)^{m-1} }{ 1-z } \frac{ \left( - f_i \left( \frac{x}{ \gamma_i(z) } \right) \right)^k }{ k } dz.
\end{equation*}
The change of coordinate on $\Gamma_i$ with $\zeta = \gamma_i(z) = y_i + \big( 1-y_i \big) z$ yields
\[
dz = \frac{d\zeta}{1-y_i}, \qquad \frac{1}{1-z}=\frac{1-y_i}{1-\zeta}.
\]
Thus, we obtain for any $1 \leq k \leq m-1$, (by renaming $\zeta$ to $z$)
\begin{equation} \label{eq: integral representation of T-Tm}
\bigg( ( T - {T_m} ) v( x \, ; \, c ) \bigg)_k
=
- \sum_{i \in I} w_i \frac{1}{2\pi\mathrm{i}} \oint_{|z| = r} \frac{ z^{m-1} }{ 1 - z } \frac{ \left( - f_i \left( \frac{x}{ z } \right) \right)^k }{ k } d z.
\end{equation}

Next we prove the integral representation of $T$.
\begin{lemma} \label{lemma: coboundary property}
Let $H$ be a holomorphic function on $\mathbb{D}_1$. Then, for any $w \in \mathbb{D}_r$,
\begin{equation*}
\frac{1}{ 2 \pi \mathrm{i} } \oint_{ |z| = r } \frac{1}{1-z} H \left( \frac{w}{z} \right) dz = H(w) - H(0).
\end{equation*}
\end{lemma}

\begin{proof}
Take $\rho$ so that $|w|/r < \rho < 1$. By assumption, we have a Taylor series that is uniformly convergent on $\mathbb{D}_\rho$, $H(\zeta) = \sum_{\ell = 0}^\infty a_\ell \zeta^\ell$. Then, we also have the following uniformly converging series for $|z| = r$.
\[
H\left( \frac{w}{z} \right) = \sum_{\ell = 0}^\infty a_\ell w^\ell z^{-\ell}.
\]
Since $\frac{1}{1-z} = \sum_{q = 0}^\infty z^q$ is uniformly convergent for $|z| = r$, we have a uniformly converging series,
\[
\frac{1}{1-z} H\left( \frac{w}{z} \right) = \sum_{\ell, q \geq 0} a_{\ell} w^\ell z^{q - \ell}.
\]
By integrating termwise,
\[
\frac{1}{ 2 \pi \mathrm{i} } \oint_{ |z| = r } \frac{1}{1-z} H \left( \frac{w}{z} \right) dz
 = \sum_{\ell = 1}^\infty a_\ell w^\ell = H(w) - H(0). \qedhere
\]
\end{proof}

By equation \eqref{eq: Tvk} and Lemma \ref{lemma: coboundary property}, for any $k \geq 1$,
\begin{equation*}
\bigg( T v(x \, ; \, c) \bigg)_k
= - \sum_{i \in I} w_i \left\{  \frac{\big( - f_i(x) \big)^k}{k} - \frac{\big( - f_i(0) \big)^k}{k}  \right\}
= - \sum_{i \in I} w_i \frac{1}{ 2 \pi \mathrm{i} } \oint_{ |z| = r } \frac{1}{1-z} \frac{ \left( - f_i \left( \frac{x}{ z } \right) \right)^k }{ k } dz.
\end{equation*}
This proved the formula for $T$ in equation \eqref{eq: integral representation of T}. Subtracting equation \eqref{eq: integral representation of T-Tm} from equation \eqref{eq: integral representation of T} yields equation \eqref{eq: integral representation of Tm}.
\end{proof}

\subsection{Integral representation for iterates} \label{subsection: Integral representation for iterates}

The integral representation established above extends naturally to iterates. For $i \in I$ and $z \in \partial \mathbb{D}_r$, define $f_{i, z}: \mathbb{D}_r \to \mathbb{D}_r$ by $f_{i, z}(x) = f_i \left( \frac{x}{z} \right)$ for $x \in \mathbb{D}_r$. Here $f_{i, z}(x) \in \mathbb{D}_r$ follows from $x/z \in \mathbb{D}_1$ and Lemma \ref{lemma: real contraction implies complex contraction}. For $\lbar{i} = (i_1, i_2, \ldots, i_n) \in I^n$ and $(z_1, \ldots, z_n) \in (\partial \mathbb{D}_r)^n$, define $\mathscr{F}_{z_n, \ldots, z_1}^{i_n, \ldots, i_1}: \mathbb{D}_r \to \mathbb{D}_r$ by
\begin{equation} \label{eq: definition of mathscr F}
\mathscr{F}_{z_n, \ldots, z_1}^{i_n, \ldots, i_1}(x) = f_{i_n, z_n} \circ \cdots \circ f_{i_1, z_1} (x).
\end{equation}

\begin{proposition} \label{prop: integral representation for iterates}
Let $x \in \mathbb{D}_r$ and $c \in \mathbb{C}$. Under the notations above, we have for any $k,n \geq 1$,
\begin{equation} \label{eq: integral representation of power of T}
\bigg( T^n v(x \, ; \, c) \bigg)_k
=
- \sum_{\lbar{i} \in I^n} \frac{w_{\lbar{i}}}{ \big( 2\pi\mathrm{i} \big)^n } \oint_{|z_1| = r} \cdots \oint_{|z_n| = r} \left( \prod_{\ell = 1}^n \frac{ 1 }{ 1 - z_\ell } \right) \frac{ \left( - \mathscr{F}_{z_n, \ldots, z_1}^{i_n, \ldots, i_1}(x) \right)^k }{ k} dz_n \cdots dz_1,
\end{equation}
and for any $n \geq 1$ and $1 \leq k \leq m-1$,
\begin{equation} \label{eq: integral representation of power of Tm}
\bigg( {T_m}^n v(x \, ; \, c) \bigg)_k
=
- \sum_{\lbar{i} \in I^n} \frac{w_{\lbar{i}}}{ \big( 2\pi\mathrm{i}\big)^n } \oint_{|z_1| = r} \cdots \oint_{|z_n| = r} \left( \prod_{\ell = 1}^n \frac{ 1 - {z_\ell}^{m-1} }{ 1 - z_\ell } \right) \frac{ \left( - \mathscr{F}_{z_n, \ldots, z_1}^{i_n, \ldots, i_1}(x) \right)^k }{ k} dz_n \cdots dz_1.
\end{equation}
Here, all contour integrals are taken over the positively oriented circle $|z_\ell|=r$.
\end{proposition}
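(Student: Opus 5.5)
The plan is induction on $n$. The case $n=1$ is exactly Proposition \ref{prop: integral representation}. The main device is to rewrite the one-step formula as an operator identity on $V$. For fixed $i$ and $z$ with $|z|=r$, the map $x\mapsto f_{i,z}(x)$ sends $\mathbb{D}_r$ into $\mathbb{D}_r$. So $v(f_{i,z}(x)\,;\,c')\in V$ for any constant $c'$.

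From equation \eqref{eq: integral representation of T} and equation \eqref{eq: Tv}, the $k$-th coordinates ($k\geq1$) satisfy
\[
\big(Tv(x\,;\,c)\big)_k=\sum_{i\in I}w_i\frac{1}{2\pi\mathrm{i}}\oint_{|z|=r}\frac{1}{1-z}\Big(v\big(f_{i,z}(x)\,;\,\ast\big)\Big)_k\,dz .
\]
This holds because $\big(v(y;\ast)\big)_k=-(-y)^k/k$. The analogous identity holds for $T_m$, with kernel $\frac{1-z^{m-1}}{1-z}$ and $1\le k\le m-1$. The $0$-th coordinate never matters for the next step, since $b_{k,0}=0$, so $T$ and $T_m$ ignore the constant $c$.

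For the inductive step, write $T^{n}v(x;c)=T\big(T^{n-1}v(x;c)\big)$. By the induction hypothesis, the coordinates $k\ge1$ of $T^{n-1}v(x;c)$ are an $(n-1)$-fold contour integral of $\big(v(\mathscr{F}^{i_{n-1},\dots,i_1}_{z_{n-1},\dots,z_1}(x);\ast)\big)_k$ against $\prod_{\ell}\frac{1}{1-z_\ell}$. Apply $T$ and interchange $T$ with the integrals and the finite sum over words. Then apply the one-step formula at the point $y=\mathscr{F}^{i_{n-1},\dots,i_1}_{z_{n-1},\dots,z_1}(x)\in\mathbb{D}_r$. This yields the extra factor $\frac{1}{1-z_n}$ and the outer composition $f_{i_n,z_n}$, which is exactly equation \eqref{eq: integral representation of power of T}. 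The same argument with $T_m$ gives equation \eqref{eq: integral representation of power of Tm}. One point needs care there. Since $T_m$ only reads the coordinates $1\le n\le m-1$ of its input, we only need the inductive formula for those $k$, and the one-step formula for $T_m$ at an arbitrary point of $\mathbb{D}_r$. Proposition \ref{prop: integral representation} provides both.

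The main obstacle is justifying the interchange of $T$ (an infinite sum over $n$ with coefficients $b_{k,n}$) with the contour integrals. I would argue via dominated convergence. The coordinates of $v(y;\ast)$ are bounded by $|y|^n/n$. The points $y=\mathscr{F}(x)$ stay in the compact set $\overline{\mathbb{D}_{r'}}$ for some $r'<r$: by Lemma \ref{lemma: real contraction implies complex contraction} the image of $\overline{\mathbb{D}_1}$ under each $f_i$ lies in $\overline{\mathbb{D}_{\max|f_i(\pm1)|}}$, and this radius can be taken strictly below $r$ after shrinking $r$ slightly. If that fails, I would instead use $|x/z|<1$ together with the open mapping bound. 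By the estimate in equation \eqref{eq: absolute convergence of a}, the row sums $\sum_n|b_{k,n}|\,|y|^n/n$ are then uniformly bounded over the integration torus. The kernels $\frac{1}{1-z}$ are bounded on $|z|=r$, so Fubini applies. For $T_m$ the sums are finite and no interchange issue arises.
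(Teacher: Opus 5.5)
Your proof is correct and follows the paper's route. Both argue by induction on $n$: move $T$ (or $T_m$) inside the contour integrals, then apply the one-step formula of Proposition~\ref{prop: integral representation} at the point $\mathscr{F}^{i_{n-1},\ldots,i_1}_{z_{n-1},\ldots,z_1}(x)\in\mathbb{D}_r$, which appends $f_{i_n,z_n}$ as the outermost map. The only difference is how the interchange is justified: the paper introduces the space $\pazocal{Y}$ and Lemma~\ref{lemma: exchange lemma} and moves $T$ past one contour at a time, whereas you use a single Fubini--Tonelli over the whole torus, which works equally well. Your worry about confining $y$ to $\overline{\mathbb{D}_{r'}}$ with $r'<r$ (which would require enlarging, not shrinking, $r$) is unnecessary. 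The bound \eqref{eq: absolute convergence of a} already gives $\sum_j |b_{k,j}|\,|y|^j/j<\infty$ uniformly for $|y|\le 1$, because $C<1$.
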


In preparation for proving this proposition, let us enlarge the domain of $T$. For each $k \geq 0$, define
\[
\mathcal{D}_k := \left\{ a=(a_n)_{n \geq 0} \in \mathbb C^{\mathbb{N}_0} \setcond \sum_{n=0}^\infty |b_{k,n} a_n|<\infty \right\}.
\]
Let $\mathcal{D} := \bigcap_{k \geq 0} \mathcal{D}_k$. Then $T: \mathcal{D} \to \mathbb{C}^{\mathbb{N}_0}$ is well-defined.

The constant $r$ is fixed throughout. We define $\pazocal{Y}$ to be the space of sequence-valued functions that are componentwise continuous and uniformly bounded by powers of $r$.
\begin{equation*}
\pazocal{Y} = \left\{ \big( u_n(\cdot) \big)_{n=0}^\infty: \partial \mathbb{D}_r \to \mathbb{C}^{\mathbb{N}_0} \setcond
\begin{array}{l}
\text{$u_n: \partial \mathbb{D}_r \to \mathbb{C}$ is continuous for every $n \in \mathbb{N}_0$, and there is} \\[2pt]
\text{$U > 0$ s.t. $|u_n(z)| \leq U r^n$ for every $n \in \mathbb{N}_0$ and $z \in \partial \mathbb{D}_r$.}
\end{array}
\right\}.
\end{equation*} 

Set
\begin{equation*}
C=\max_{i\in I}\big|f_i^{\top}(0)\big| < 1,\qquad
A= \max_{i\in I}|f_i(0)| < 1,\qquad
B= \max \{ 1, \, \max_{i\in I}|f_i'(0)| \}.
\end{equation*}

\begin{lemma} \label{lemma: uniform convergence of Tu for u in Y}
Let $u \in \pazocal{Y}$. Then $u(z) \in \mathcal{D}$ for every $z \in \partial \mathbb{D}_r$. Furthermore, for each fixed $k \geq 0$, the sum defining $\big( T u(z) \big)_k$ converges absolutely and uniformly for all $z \in \partial \mathbb{D}_r$.
\end{lemma}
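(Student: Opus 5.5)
The plan is to bound $|b_{k,n}|$ for fixed $k$ by an explicit function of $n$ that grows at most polynomially times $C^n$. Combined with $|u_n(z)|\le U r^n$, this gives a summable majorant independent of $z$. Uniform convergence then follows from the Weierstrass M-test.

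\textbf{Case $k=0$.} Here $b_{0,0}=0$ and, for $n\ge1$, $|b_{0,n}|\le\sum_i w_i|f_i^{\top}(0)|^n\le C^n$. Hence $\sum_n |b_{0,n}u_n(z)|\le U\sum_n (Cr)^n<\infty$ uniformly in $z$, since $Cr<1$.

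\textbf{Case $k\ge1$.} I will reuse the estimate from the proof of Theorem \ref{thm: main theorem 1} leading to \eqref{eq: absolute convergence of a}. Fix $k$. For $n\ge1$ and $1\le\ell\le\min\{k,n\}$ we have $\binom{n}{\ell}\le n^\ell/\ell!\le n^k$. Bounding the remaining factors by $\binom{k-1}{\ell-1}\le 2^{k-1}$, $|f_i(0)|^{k-\ell}\le A^{k-\ell}\le 1$, $|f_i'(0)|^\ell\le B^k$ (using $B\ge1$) and $|f_i^{\top}(0)|^{n-\ell}\le C^{n-\ell}$ gives
\[
|b_{k,n}|\le \sum_{\ell=1}^{\min\{k,n\}} n^k 2^{k-1} B^k C^{n-\ell} \le k\,2^{k-1}B^k\, n^k C^{n-k},
\]
at least when $C>0$. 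If $C=0$, only the term $\ell=n$ survives, so $b_{k,n}=0$ for $n>k$ and the sum is finite.

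With this bound,
\[
\sum_n|b_{k,n}u_n(z)|\le U k2^{k-1}B^kC^{-k}\sum_n n^k (Cr)^n<\infty,
\]
which is finite because $Cr<1$, and the bound does not depend on $z$. Thus $u(z)\in\mathcal D_k$ for every $k$, so $u(z)\in\mathcal D$. The terms $b_{k,n}u_n(z)$ are continuous in $z$, and the summable majorant is independent of $z$, so the M-test gives absolute and uniform convergence on $\partial\mathbb D_r$. As a consequence, $(Tu(z))_k$ is continuous in $z$.

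There is no real obstacle here. The only care needed is to keep the bound independent of $z$, which holds because $z$ enters only through $u_n(z)$, and to handle the degenerate case $C=0$ separately, as done above.
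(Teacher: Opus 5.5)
Your proof is correct and follows essentially the paper's route: a polynomial-in-$n$ bound on $|b_{k,n}|$ for fixed $k$, combined with $|u_n(z)|\le Ur^n$ and the Weierstrass M-test. The only difference is cosmetic. The paper uses Vandermonde's identity to get $|b_{k,n}|\le B^kC^{\max\{n-k,0\}}\binom{n+k-1}{k}$, which avoids negative powers of $C$ and so needs no separate $C=0$ case. You could also avoid that case by bounding $C^{n-\ell}\le 1$ and using $r<1$.
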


\begin{proof}
First, for every $k \geq 1$,
\begin{align}
|b_{k,n}|
&\leq \sum_{\ell=1}^{\min\{k,n\}} \binom{n}{\ell} \binom{k-1}{\ell-1} \, C^{n-\ell}\, A^{k-\ell}B^{\ell}
\leq B^k C^{\max \{n-k, 0\}} \sum_{\ell=1}^{\min\{k,n\}} \binom{n}{\ell} \binom{k-1}{\ell-1} \nonumber \\
&= B^k C^{\max \{n-k, 0\}} \binom{n+k-1}{k} \quad \left( \text{by Vandermonde's identity} \right). \label{eq: bound on bkn}
\end{align}

Let $u = \big( u_n \big)_{n=0}^\infty \in \pazocal{Y}$. Take $U > 0$ such that $|u_n(z)| \leq U r^n$ for every $n \in \mathbb{N}_0$ and $z \in \partial \mathbb{D}_r$. Then, we have for every $z \in \partial \mathbb{D}_r$ and $k \geq 1$ that
\begin{equation} \label{eq: abs convergence of sum k geq 1}
\left| \sum_{n=0}^{\infty} b_{k,n} \, u_n(z) \right|
\leq U B^k \sum_{n=1}^\infty \binom{n+k-1}{k} C^{\max \{n-k, 0\}} r^n
< \infty.
\end{equation}
Also,
\begin{equation} \label{eq: abs convergence of sum k = 0}
\left| \sum_{n=0}^{\infty} b_{0,n} \, u_n(z) \right|
\leq U \sum_{n=1}^{\infty} C^n r^n
< \infty.
\end{equation}
So we conclude $u(z) \in \mathcal{D}$. These bounds are independent of $z$, hence the series defining $(Tu(z))_k$ converges absolutely and uniformly in $z \in \partial \mathbb{D}_r$ by the Weierstrass M-test.
\end{proof}

For a continuous function $\Phi: \partial \mathbb{D}_r \to \mathbb{C}$, we define $\pazocal{I}_\Phi: \pazocal{Y} \to \mathbb{C}^{\mathbb{N}_0}$ by component-wise integration:
\[
\big(\pazocal{I}_\Phi u \big)_n := \frac{1}{2\pi \mathrm{i}} \oint_{|z|=r} \Phi(z) \, u_n(z) \, dz \qquad \text{for each $n \geq 0$.}
\]

\smallskip

The next lemma shows that, for $u \in \pazocal{Y}$, the operator $T$ and $\pazocal{I}_\Phi$ can be interchanged.
\begin{lemma} \label{lemma: exchange lemma}
Let $u = (u_n)_{n=0}^\infty \in \pazocal{Y}$. Suppose $\Phi: \partial \mathbb{D}_r \to \mathbb{C}$ is continuous. Then $\pazocal{I}_\Phi u \in \mathcal{D}$, and for each $k \geq 0$,
\[
\big( T ( \pazocal{I}_\Phi u ) \big)_k
=\frac{1}{2 \pi \mathrm{i}} \oint_{|z|=r} \Phi(z) \, \big( Tu(z) \big)_k\,dz.
\]
\end{lemma}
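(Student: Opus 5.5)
The plan is to use Fubini--Tonelli, in its discrete-sum/integral form, justified by the uniform absolute bounds already obtained in the proof of Lemma \ref{lemma: uniform convergence of Tu for u in Y}. Fix $u \in \pazocal{Y}$ with $|u_n(z)| \leq U r^n$ for all $n$ and $z \in \partial \mathbb{D}_r$. Let $\|\Phi\|_\infty = \max_{|z|=r} |\Phi(z)|$, which is finite since $\Phi$ is continuous on a compact set. First I show $\pazocal{I}_\Phi u \in \mathcal{D}$. Each component $u_n$ is continuous, so $(\pazocal{I}_\Phi u)_n$ is well defined. The standard estimate $\big|\frac{1}{2\pi\mathrm{i}}\oint_{|z|=r}\cdots\big| \leq r \max|\cdot|$ gives $|(\pazocal{I}_\Phi u)_n| \leq r\|\Phi\|_\infty U r^n$. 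Hence $\pazocal{I}_\Phi u$ again satisfies a bound of the form $U' r^n$, with $U' = r\|\Phi\|_\infty U$. The bounds \eqref{eq: abs convergence of sum k geq 1} and \eqref{eq: abs convergence of sum k = 0} then show $\sum_n |b_{k,n} (\pazocal{I}_\Phi u)_n| < \infty$ for every $k$, so $\pazocal{I}_\Phi u \in \mathcal{D}$.

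Next I establish the interchange formula for fixed $k \geq 0$. I write
\[
\big(T(\pazocal{I}_\Phi u)\big)_k = \sum_{n=0}^\infty b_{k,n}\,\frac{1}{2\pi\mathrm{i}}\oint_{|z|=r}\Phi(z)\,u_n(z)\,dz .
\]
The goal is to move the sum inside the integral. The partial sums $\sum_{n\le N} b_{k,n}\Phi(z)u_n(z)$ converge uniformly in $z\in\partial\mathbb{D}_r$ to $\Phi(z)\big(Tu(z)\big)_k$. This follows from Lemma \ref{lemma: uniform convergence of Tu for u in Y} together with the boundedness of $\Phi$: the tails are bounded by $\|\Phi\|_\infty U \sum_{n>N} |b_{k,n}| r^n$, which tends to zero independently of $z$ by \eqref{eq: bound on bkn} and the finiteness in \eqref{eq: abs convergence of sum k geq 1}, \eqref{eq: abs convergence of sum k = 0}. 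Each finite partial sum can be exchanged with the contour integral by linearity. Uniform convergence on the compact contour, which has length $2\pi r$, then lets me pass to the limit under the integral sign. This yields the stated identity.

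One small point to check is that $z \mapsto (Tu(z))_k$ is continuous, so that the right-hand side makes sense as a Riemann contour integral. This follows because it is a uniform limit of continuous functions. I expect no real obstacle. The only substantive input is the $z$-uniform summability $\sum_n |b_{k,n}| r^n < \infty$ for each fixed $k$, and this was already verified via the Vandermonde bound with $C<1$, $r<1$. The care needed is simply to use that bound uniformly in $z$, rather than pointwise convergence alone.
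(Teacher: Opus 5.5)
Your proposal is correct and follows essentially the same route as the paper. Both arguments bound $|(\pazocal{I}_\Phi u)_n|$ by a constant times $r^n$ to get $\pazocal{I}_\Phi u \in \mathcal{D}$, then use the $z$-uniform convergence of the partial sums $\sum_{n<N} b_{k,n} u_n(z)$ from Lemma \ref{lemma: uniform convergence of Tu for u in Y} to exchange the finite sums with the contour integral and pass to the limit.
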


\begin{proof}
Since $u \in \pazocal{Y}$, there is $U > 0$ such that $|u_n(z)| \leq U r^n$ for every $n \in \mathbb{N}_0$ and $z \in \partial \mathbb{D}_r$.
Let $L = \max_{|z|=r} |\Phi(z)| < \infty$. We have for each $n \geq 0$,
\begin{equation} \label{eq: Iphi_bound}
\big| (\pazocal{I}_\Phi u)_n \big|
= \left| \frac{1}{2\pi \mathrm{i}} \oint_{|z|=r} \Phi(z) \, u_n(z) \, dz \right|
\leq \frac{1}{2\pi}\cdot (2\pi r)\cdot \sup_{|z|=r} |\Phi(z)u_n(z)|
\leq r L U r^n.
\end{equation}

We first show that $\pazocal{I}_\Phi u \in \mathcal{D}$. For $k = 0$, we have by equation \eqref{eq: Iphi_bound},
\[
\sum_{n=0}^\infty \big| b_{0,n} (\pazocal{I}_\Phi u)_n \big|
\leq \sum_{n=1}^\infty C^n \, r L U r^n
= r L U \sum_{n=1}^\infty (Cr)^n
< \infty.
\]
For each $k \geq 1$, combining equations \eqref{eq: bound on bkn} and \eqref{eq: Iphi_bound}, we obtain
\begin{align*}
\sum_{n=0}^\infty \big| b_{k,n} (\pazocal{I}_\Phi u)_n \big|
\leq r L U \sum_{n=1}^\infty |b_{k,n}| r^n
\leq r L U \,B^k
\sum_{n=1}^\infty \binom{n+k-1}{k} C^{\max \{n-k, 0\}} r^n
< \infty,
\end{align*}

Thus $\pazocal{I}_\Phi u \in \mathcal{D}$. Next, fix $k \geq 0$, and let
\[
B^{(k)}_N(z) := \sum_{n=0}^{N-1} b_{k,n} u_n(z) \qquad (z \in \partial \mathbb{D}_r).
\]
By Lemma \ref{lemma: uniform convergence of Tu for u in Y}, $\sum_{n=0}^\infty b_{k,n}u_n(z)$ converges absolutely and uniformly on $\partial \mathbb{D}_r$. This implies that as $N \to \infty$, we have $B^{(k)}_N \to \big( Tu(\cdot) \big)_k$ uniformly on $\partial \mathbb{D}_r$.
Therefore,
\begin{equation*}
\lim_{N\to\infty} \frac{1}{2\pi \mathrm{i}} \oint_{|z|=r} \Phi(z)\, B^{(k)}_N(z)\, dz
=
\frac{1}{2\pi \mathrm{i}} \oint_{|z|=r} \Phi(z)\, \big( Tu(z) \big)_k \, dz.
\end{equation*}
For each $N$,
\[
\frac{1}{2\pi \mathrm{i}} \oint_{|z|=r} \Phi(z)\, B^{(k)}_N(z)\, dz
= \sum_{n=0}^{N-1} b_{k,n} \left( \frac{1}{2\pi \mathrm{i}} \oint_{|z|=r} \Phi(z)\, u_n(z)\, dz \right)
= \sum_{n=0}^{N-1} b_{k,n} \big( \pazocal{I}_\Phi u \big)_n.
\]
Since we already proved $\pazocal{I}_\Phi u \in \mathcal{D}$, the series $\sum_{n=0}^\infty b_{k,n}(\pazocal{I}_\Phi u)_n$ converges absolutely. We conclude that
\[
\big( T(\pazocal{I}_\Phi u) \big)_k
= \sum_{n=0}^{\infty} b_{k,n} \big( \pazocal{I}_\Phi u \big)_n
= \frac{1}{2 \pi \mathrm{i}} \oint_{|z|=r} \Phi(z) \, \big( Tu(z) \big)_k\,dz. \qedhere
\]
\end{proof}

\begin{proof}[Proof of Proposition \ref{prop: integral representation for iterates}]
Let $z_1,\ldots,z_n \in \partial\mathbb{D}_r$ and $(i_1, \ldots, i_n) \in I^n$. By the definition of $\mathscr{F}_{z_n,\ldots,z_1}^{i_n,\ldots,i_1}$ in equation \eqref{eq: definition of mathscr F} we have for every $k \geq 1$ and $x \in \mathbb{D}_r$,
\begin{equation} \label{eq: bound for F over k}
\left| \frac1k \left( - \mathscr{F}_{z_n,\ldots,z_1}^{i_n,\ldots,i_1}(x) \right)^k \right|
\leq \frac{r^k}{k}.
\end{equation}

Define $\Phi: \partial \mathbb{D}_r \to \mathbb{C}$ by $\Phi(z)= (1-z)^{-1}$ for $z \in \partial \mathbb{D}_r$. Since $0<r<1$, $\Phi$ is continuous on $\partial \mathbb{D}_r$. We also have the uniform bound
\begin{equation} \label{eq: bound for Phi}
|\Phi(z)| \leq (1-r)^{-1}.
\end{equation}
We note that the order of integration in the formulae below may be exchanged due to Fubini's theorem because the integrand is absolutely integrable.

\smallskip
Let us prove equation \eqref{eq: integral representation of power of T}. Proposition \ref{prop: integral representation} gives equation \eqref{eq: integral representation of power of T} for $n=1$, since $\mathscr F^{i_1}_{z_1}(x)=f_{i_1,z_1}(x)=f_{i_1}(x/z_1)$.

Suppose we have proved equation \eqref{eq: integral representation of power of T} for some $n \geq 1$. Fix $x \in \mathbb{D}_r$ and $c \in \mathbb{C}$. For each $\lbar{i}=(i_1,\ldots,i_n) \in I^n$, define a sequence-valued function $u^{\, \, \lbar{i}}:\partial \mathbb{D}_r \to \mathbb{C}^{\mathbb{N}_0}$ by
\[
u^{\, \, \lbar{i}}(z_1)
=
\frac{1}{(2\pi\mathrm{i})^{n-1}}
\oint_{|z_2|=r} \cdots \oint_{|z_n|=r}
\left( \prod_{\ell=2}^{n} \Phi(z_\ell) \right)
v \left( \mathscr{F}_{z_n, \ldots, z_2}^{i_n, \ldots, i_2}\big( f_{i_1, z_1}(x) \big) \, ; \, c\right) \, dz_n \cdots dz_2.
\]
Here, the integration is performed component-wise. Each component of $u^{\,\,\lbar{i}}$ is continuous in $z_1$. By the bounds in equations \eqref{eq: bound for F over k} and \eqref{eq: bound for Phi}, we have for all $j \geq 1$ and $z_1 \in \partial \mathbb{D}_r$,
\[
|u^{\, \, \lbar{i}}_j(z_1)| \leq \frac{1}{ (2 \pi )^{n-1} } (2 \pi r)^{n-1} \frac{1}{ (1-r)^{n-1} } \frac{r^j}{j}
\leq \left( \frac{r}{1-r} \right)^{n-1} r^j.
\]
Since $u^{\, \, \lbar{i}}_0$ is constant, $|u^{\, \, \lbar{i}}_j(z_1)| \leq \max\left\{ \left|u^{\, \, \lbar{i}}_0 \right|, \left( \frac{r}{1-r} \right)^{n-1} \right\} r^j$ for all $j$ and $z_1$, which implies $u^{\, \, \lbar{i}} \in \pazocal{Y}$.

\smallskip
By induction hypothesis, we have for every $k \geq 1$
\begin{align*}
\big( T^{n} v(x\,;\,c) \big)_k
&= - \sum_{\lbar{i} \in I^n} \frac{w_{\lbar{i}}}{ \big( 2\pi\mathrm{i} \big)^n } \oint_{|z_1| = r} \cdots \oint_{|z_n| = r} \left( \prod_{\ell=1}^{n} \Phi(z_\ell) \right) \frac{ \left( - \mathscr{F}_{z_n, \ldots, z_2}^{i_n, \ldots, i_2}\big( f_{i_1, z_1}(x) \big) \right)^k }{ k} dz_n \cdots dz_1 \\
&= \sum_{\lbar{i}\in I^n} \frac{w_{\lbar{i}}}{ 2\pi\mathrm{i} }
\oint_{|z_1|=r} \Phi(z_1)\,
\big( u^{\, \, \lbar{i}}(z_1) \big)_k \, dz_1
= \sum_{\lbar{i}\in I^n} w_{\lbar{i}}\, \big( \pazocal{I}_\Phi u^{\, \, \lbar{i}} \big)_k.
\end{align*}
Since $u^{\, \, \lbar{i}} \in \pazocal{Y}$ for each $\lbar{i}\in I^n$, Lemma \ref{lemma: exchange lemma} gives
\[
\big( T (\pazocal{I}_\Phi u^{\, \, \lbar{i}}) \big)_k
=
\frac{1}{2\pi\mathrm{i}} \oint_{|z_1|=r} \Phi(z_1)\, \big( T u^{\, \, \lbar{i}}(z_1) \big)_k\,dz_1.
\]

Therefore, for every $k \geq 1$,
\[
\big( T^{n+1} v(x\,;\,c) \big)_k
=
\sum_{\lbar{i}\in I^n} w_{\lbar{i}}\, \frac{1}{2\pi\mathrm{i}}
\oint_{|z_1|=r} \Phi(z_1)\, \big( T u^{\, \, \lbar{i}}(z_1) \big)_k\,dz_1.
\]
At each subsequent stage ($2 \leq j \leq n$) the same argument used for $u^{\,\,\lbar{i}}$ shows that the corresponding sequence-valued integrand belongs to $\pazocal{Y}$. Thus Lemma \ref{lemma: exchange lemma} applies repeatedly, and we can move $T$ past the remaining integrals in $z_2,\ldots,z_n$. Then applying Proposition \ref{prop: integral representation} yields
\begin{flalign*}
&\bigg( T^{n+1} v(x \, ; \, c) \bigg)_k
=
\sum_{\lbar{i} \in I^n} \frac{w_{\lbar{i}}}{ \big( 2\pi\mathrm{i} \big)^n } \oint_{|z_1| = r} \cdots \oint_{|z_n| = r} \left( \prod_{\ell=1}^{n} \Phi(z_\ell) \right)
\bigg( T v \left(\mathscr{F}_{z_n,\ldots,z_1}^{i_n,\ldots,i_1}(x)\,;\,c\right) \bigg)_k
dz_n \cdots dz_1 \\
&=
- \sum_{\lbar{i} \in I^n} \frac{w_{\lbar{i}}}{ \big( 2\pi\mathrm{i} \big)^n } \oint_{|z_1| = r} \cdots \oint_{|z_n| = r} \\
& \hspace{50pt} \left( \prod_{\ell=1}^{n} \Phi(z_\ell) \right)
\left( \sum_{i_{n+1} \in I} \frac{w_{i_{n+1}}}{ 2 \pi \mathrm{i} } \oint_{|z_{n+1}| = r} \Phi(z_{n+1}) \frac{ \left( - \mathscr{F}_{z_{n+1},z_n, \ldots,z_1}^{i_{n+1},i_n,\ldots,i_1}(x) \right)^k }{k}
dz_{n+1}
\right) dz_n \cdots dz_1.
\end{flalign*}
After regrouping we obtain exactly \eqref{eq: integral representation of power of T} with $n$ replaced by $n+1$.

\smallskip
The proof of equation \eqref{eq: integral representation of power of Tm} is identical, replacing $\Phi$ by $\Phi_m(z)=\frac{1-z^{m-1}}{1-z}$. Whenever we need to move $T_m$ past an integral, we only exchange a \emph{finite} sum with the integral, which is always permitted. Carrying out the same iteration as above yields \eqref{eq: integral representation of power of Tm}.
\end{proof}

\subsection{Integral operators and telescoping} \label{subsection: Integral operators and telescoping}

Using these integral representations, we now estimate $\bigg( \big( T^n - {T_m}^n \big) v(x\,;\,c)\bigg)_k$.

Denote by $\pazocal{H}(D)$ the set of holomorphic functions on a domain $D$. For $i \in I$ and $m \in \mathbb{N}$, we define $\pazocal{L}_i, \pazocal{R}_{i, m}, \pazocal{K}_{i, m}: \pazocal{H}(\mathbb{D}_r) \to \pazocal{H}(\mathbb{D}_r)$ by the following. For $h \in \pazocal{H}(\mathbb{D}_r)$,
\begin{gather*}
\pazocal{L}_i h(w) = \frac{1}{ 2 \pi \mathrm{i} } \oint_{ |z| = r } \frac{1}{1-z} \, \, h \left( f_i\left( \frac{w}{z} \right) \right) dz, \\[5pt]
\pazocal{R}_{i, m} h(w) = \frac{1}{ 2 \pi \mathrm{i} } \oint_{ |z| = r } \frac{z^{m-1}}{1-z} \, \, h \left( f_i\left( \frac{w}{z} \right) \right) dz, \\[5pt]
\pazocal{K}_{i,m}h = \pazocal{L}_i h - \pazocal{R}_{i,m}h.
\end{gather*}
These are well-defined. Indeed, for $w \in \mathbb{D}_r$ and $z \in \partial \mathbb{D}_r$ we have $w/z \in \mathbb{D}_1$, and $f_i(\mathbb{D}_1) \subset \mathbb{D}_r$ by Lemma \ref{lemma: real contraction implies complex contraction}. For each fixed $z \in \partial \mathbb{D}_r$, the map $w \longmapsto \frac{1}{1-z} \, \, h \circ f_i\left( \frac{w}{z} \right)$ is holomorphic. For any closed piecewise curve $\gamma$ inside a compact $K \subset \mathbb{D}_r$, by using the boundedness of $h$ on the compact set $\{ f_i(w/z) \, | \, w \in K, \, |z| = r\}$ and Fubini's theorem,
\[
\oint_\gamma \pazocal{L}_i h(w) dw = \frac{1}{ 2 \pi \mathrm{i} } \oint_{ |z| = r } \oint_\gamma \frac{1}{1-z} \, \, h \left( f_i\left( \frac{w}{z} \right) \right) dw dz = 0.
\]
Then Morera's theorem ensures that $\pazocal{L}_i h$ is holomorphic on $\mathbb{D}_r$. The same argument applies to $\pazocal{R}_{i,m}$, hence also to $\pazocal{K}_{i,m} = \pazocal{L}_i - \pazocal{R}_{i,m}$.

By Lemma \ref{lemma: coboundary property}, for any $h \in \pazocal{H}(\mathbb{D}_r)$
\begin{equation} \label{eq: operator L identity}
\pazocal{L}_i h(w) = h\big( f_i(w) \big) - h\big( f_i(0) \big).
\end{equation}
We define the oscillation seminorm $\| \cdot \|_{\mathrm{osc}}$ for $h \in \pazocal{H}(\mathbb{D}_r)$ by
\[
\| h \|_{\mathrm{osc}} = \sup_{u, v \in \mathbb{D}_r} \big| h(u) - h(v) \big|.
\]
Also, let 
\begin{equation} \label{eq: definition of Kr}
K(r) = \frac{1}{2 \pi} \oint_{|z| = r} \frac{|dz|}{|1-z|} \; \; \in \; (0, \infty).
\end{equation}

\begin{lemma} \label{lemma: basic properties of LRK}
Suppose $h \in \pazocal{H}( \mathbb{D}_r )$ satisfies $\| h \|_{\mathrm{osc}} < \infty$. Then for any $i \in I$ and $m\geq1$,
\begin{equation*}
\left\| \pazocal{L}_i h \right\|_{\mathrm{osc}} \leq \left\| h \right\|_{\mathrm{osc}}, \quad
\left\| \pazocal{R}_{i,m} h \right\|_{\mathrm{osc}} \leq r^{m-1} K(r) \left\| h \right\|_{\mathrm{osc}}, \quad
\left\| \pazocal{K}_{i,m} h \right\|_{\mathrm{osc}} \leq \big( 1 + r^{m-1} K(r) \big) \left\| h \right\|_{\mathrm{osc}}.
\end{equation*}
Also,
\begin{equation} \label{eq: LRKh0 is 0}
\pazocal{L}_i h(0) = \pazocal{R}_{i,m} h(0) = \pazocal{K}_{i,m} h(0) = 0.
\end{equation}
\end{lemma}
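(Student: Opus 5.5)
The three oscillation bounds and the vanishing at $0$ follow from identity \eqref{eq: operator L identity} together with a subtraction-of-constants trick in the contour integrals.

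\emph{Bound for $\pazocal{L}_i$.} By \eqref{eq: operator L identity}, $\pazocal{L}_i h(w) = h(f_i(w)) - h(f_i(0))$. Hence
\[
\pazocal{L}_i h(u) - \pazocal{L}_i h(v) = h(f_i(u)) - h(f_i(v)).
\]
Since $f_i(\mathbb{D}_r) \subset f_i(\mathbb{D}_1) \subset \mathbb{D}_r$ by Lemma \ref{lemma: real contraction implies complex contraction}, this difference is bounded by $\| h \|_{\mathrm{osc}}$. Putting $w=0$ in the same identity gives $\pazocal{L}_i h(0)=0$.

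\emph{Bound for $\pazocal{R}_{i,m}$.} The key observation is that
\[
\frac{1}{2\pi\mathrm{i}}\oint_{|z|=r} \frac{z^{m-1}}{1-z}\,dz = 0,
\]
because the integrand is holomorphic on $\overline{\mathbb{D}_r}$ when $m \geq 1$ and $r<1$. So any constant may be subtracted from $h$ without changing $\pazocal{R}_{i,m}h$. Fix $u, v \in \mathbb{D}_r$. Then
\[
\pazocal{R}_{i,m}h(u)-\pazocal{R}_{i,m}h(v)=\frac{1}{2\pi\mathrm{i}}\oint_{|z|=r}\frac{z^{m-1}}{1-z}\Big(h\big(f_i(u/z)\big)-h\big(f_i(v/z)\big)\Big)\,dz.
\]
For $|z|=r$ we have $u/z, v/z \in \mathbb{D}_1$, so both points $f_i(u/z)$ and $f_i(v/z)$ lie in $\mathbb{D}_r$. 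The bracket is therefore bounded by $\|h\|_{\mathrm{osc}}$. Taking absolute values inside, with $|z^{m-1}| = r^{m-1}$, gives the bound $r^{m-1}K(r)\|h\|_{\mathrm{osc}}$ by the definition \eqref{eq: definition of Kr}.

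\emph{Vanishing of $\pazocal{R}_{i,m}h(0)$.} At $w=0$ the integrand is $\frac{z^{m-1}}{1-z}\,h(f_i(0))$, a constant multiple of $\frac{z^{m-1}}{1-z}$. By the vanishing integral above, $\pazocal{R}_{i,m}h(0)=0$.

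\emph{Bounds for $\pazocal{K}_{i,m}$.} The statements for $\pazocal{K}_{i,m}=\pazocal{L}_i-\pazocal{R}_{i,m}$ follow from subadditivity of the seminorm $\|\cdot\|_{\mathrm{osc}}$ and linearity at $0$.

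The only delicate point is the constant-subtraction step for $\pazocal{R}_{i,m}$. It needs both that the kernel $\frac{z^{m-1}}{1-z}$ integrates to zero and that $f_i(w/z)$ stays in $\mathbb{D}_r$ for every $w\in\mathbb{D}_r$ and $|z|=r$. The latter holds by Lemma \ref{lemma: real contraction implies complex contraction}, and the rest is routine.
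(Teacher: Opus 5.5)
Your proof is correct and follows essentially the same route as the paper: the $\pazocal{L}_i$ bound from the identity \eqref{eq: operator L identity}, the $\pazocal{R}_{i,m}$ bound by estimating the contour integral of the difference directly, the $\pazocal{K}_{i,m}$ bound by the triangle inequality, and the vanishing at $0$ via Cauchy's theorem. You get $\pazocal{L}_i h(0)=0$ from the identity instead, which is equally fine. One minor quibble: the ``constant-subtraction'' observation is not actually needed for the oscillation bound of $\pazocal{R}_{i,m}$, which follows from linearity of the integral alone; it is used only for $\pazocal{R}_{i,m}h(0)=0$, so it is not really the delicate point.
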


\begin{proof}
First inequality for $\pazocal{L}_i$ is immediate from equation \eqref{eq: operator L identity}. For any $u, v \in \mathbb{D}_r$,
\begin{align*}
\left| \pazocal{R}_{i,m} h(u) - \pazocal{R}_{i,m} h(v) \right|
&= \left| \frac{1}{ 2 \pi \mathrm{i} } \oint_{ |z| = r } \frac{z^{m-1}}{1-z} \Big( \, \, h \circ f_i\left( \frac{u}{z} \right) - \, \, h \circ f_i\left( \frac{v}{z} \right) \Big) dz \right| \\
&\leq \frac{ \left\| h \right\|_{\mathrm{osc}} }{2 \pi} \oint_{|z| = r} \frac{ r^{m-1} |dz|}{|1-z|}
= r^{m-1} K(r) \left\| h \right\|_{\mathrm{osc}}.
\end{align*}
The inequality for $\pazocal{K}_{i,m}$ follows from the triangle inequality. The equation \eqref{eq: LRKh0 is 0} is an immediate consequence of Cauchy's integral theorem.
\end{proof}

The following telescoping expansion is crucial.
\begin{lemma} \label{lemma: telescoping of L and K}
Fix $m \in \mathbb{N}$ and write $\pazocal{K}_i = \pazocal{K}_{i,m}$ and $\pazocal{R}_i = \pazocal{R}_{i,m}$ for $i \in I$. For any $(i_1, \ldots, i_n) \in I^n$,
\[
\pazocal{L}_{i_1} \cdots \pazocal{L}_{i_n} - \pazocal{K}_{i_1} \cdots \pazocal{K}_{i_n}
=
\sum_{\ell = 1}^n \pazocal{L}_{i_1} \cdots \pazocal{L}_{i_{\ell-1}} \pazocal{R}_{i_{\ell}} \pazocal{K}_{i_{\ell+1}} \cdots \pazocal{K}_{i_n}.
\]
\end{lemma}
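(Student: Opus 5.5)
This is the standard telescoping identity for a difference of products of (not necessarily commuting) linear operators. It uses only linearity and the definition $\pazocal{K}_{i}=\pazocal{L}_i-\pazocal{R}_{i}$, i.e.\ $\pazocal{L}_i-\pazocal{K}_i=\pazocal{R}_i$, with all operators acting on $\pazocal{H}(\mathbb{D}_r)$. The only point to watch is that every composition is defined. Each of $\pazocal{L}_i,\pazocal{R}_{i,m},\pazocal{K}_{i,m}$ maps $\pazocal{H}(\mathbb{D}_r)$ into itself, as shown by the Morera argument preceding Lemma \ref{lemma: basic properties of LRK}, so arbitrary finite compositions make sense and the identity holds as an identity of linear maps on $\pazocal{H}(\mathbb{D}_r)$.

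For $0\le \ell\le n$, define the hybrid products
\[
P_\ell=\pazocal{L}_{i_1}\cdots\pazocal{L}_{i_\ell}\,\pazocal{K}_{i_{\ell+1}}\cdots\pazocal{K}_{i_n},
\]
so that $P_n=\pazocal{L}_{i_1}\cdots\pazocal{L}_{i_n}$ and $P_0=\pazocal{K}_{i_1}\cdots\pazocal{K}_{i_n}$. The left-hand side is then the telescoping sum
\[
P_n-P_0=\sum_{\ell=1}^n (P_\ell-P_{\ell-1}).
\]

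In each difference $P_\ell-P_{\ell-1}$, the two products agree except in the $\ell$-th factor, which is $\pazocal{L}_{i_\ell}$ in $P_\ell$ and $\pazocal{K}_{i_\ell}$ in $P_{\ell-1}$. Because composition is linear in each slot, we can factor:
\[
P_\ell-P_{\ell-1}=\pazocal{L}_{i_1}\cdots\pazocal{L}_{i_{\ell-1}}\,(\pazocal{L}_{i_\ell}-\pazocal{K}_{i_\ell})\,\pazocal{K}_{i_{\ell+1}}\cdots\pazocal{K}_{i_n}
=\pazocal{L}_{i_1}\cdots\pazocal{L}_{i_{\ell-1}}\,\pazocal{R}_{i_\ell}\,\pazocal{K}_{i_{\ell+1}}\cdots\pazocal{K}_{i_n}.
\]
Left linearity of the outer compositions holds trivially. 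Right linearity holds because $\pazocal{L}_i,\pazocal{R}_i,\pazocal{K}_i$ are linear maps: each is an integral whose integrand depends linearly on $h$.

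Summing over $\ell$ gives the claim. Alternatively, one can run an induction on $n$ using
\[
\pazocal{L}_{i_1}X-\pazocal{K}_{i_1}Y=\pazocal{R}_{i_1}Y+\pazocal{L}_{i_1}(X-Y),
\]
applied with $X=\pazocal{L}_{i_2}\cdots\pazocal{L}_{i_n}$ and $Y=\pazocal{K}_{i_2}\cdots\pazocal{K}_{i_n}$.

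There is no real obstacle here. The only care needed is in the ordering convention: the $\pazocal{L}$'s sit to the left and the $\pazocal{K}$'s to the right, matching the order in the statement.
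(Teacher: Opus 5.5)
Your proof is correct and is essentially the paper's argument: the paper proves the identity by induction on $n$ via exactly the splitting $\pazocal{L}_{i_1}X-\pazocal{K}_{i_1}Y=\pazocal{L}_{i_1}(X-Y)+\pazocal{R}_{i_1}Y$ that you give as your alternative, and your telescoping over the hybrid products $P_\ell$ is just that induction unrolled. The one fact needed is linearity of the factors $\pazocal{L}_{i_j}$ on the left, since composing on the right with the $\pazocal{K}$'s distributes over differences for any maps; you correctly identified this requirement.
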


\begin{proof}
We prove by induction on $n$. For $n = 1$, the right-hand side is (defined as) $ \pazocal{R}_{i_1} = \pazocal{L}_{i_1} - \pazocal{K}_{i_1}$. If the identity is true for words of length $n$,
\begin{align*}
\pazocal{L}_{i_1} \cdots \pazocal{L}_{i_{n+1}} - \pazocal{K}_{i_1} \cdots \pazocal{K}_{i_{n+1}}
&= \pazocal{L}_{i_1} \left( \pazocal{L}_{i_2} \cdots \pazocal{L}_{i_{n+1}} - \pazocal{K}_{i_2} \cdots \pazocal{K}_{i_{n+1}} \right) + \left( \pazocal{L}_{i_1} - \pazocal{K}_{i_1} \right) \pazocal{K}_{i_2} \cdots \pazocal{K}_{i_{n+1}}
\\
&= \pazocal{L}_{i_1} \left( \sum_{\ell = 1}^n \pazocal{L}_{i_2} \cdots \pazocal{L}_{i_{\ell}} \pazocal{R}_{i_{\ell+1}} \pazocal{K}_{i_{\ell+2}} \cdots \pazocal{K}_{i_{n+1}} \right) + \pazocal{R}_{i_1} \pazocal{K}_{i_2} \cdots \pazocal{K}_{i_{n+1}} \\
&= \left( \sum_{\ell = 1}^n \pazocal{L}_{i_1} \cdots \pazocal{L}_{i_{\ell}} \pazocal{R}_{i_{\ell+1}} \pazocal{K}_{i_{\ell+2}} \cdots \pazocal{K}_{i_{n+1}} \right) + \pazocal{R}_{i_1} \pazocal{K}_{i_2} \cdots \pazocal{K}_{i_{n+1}} \\
&= \sum_{\ell = 1}^{n+1} \pazocal{L}_{i_1} \cdots \pazocal{L}_{i_{\ell-1}} \pazocal{R}_{i_{\ell}} \pazocal{K}_{i_{\ell+1}} \cdots \pazocal{K}_{i_{n+1}}. \qedhere
\end{align*}
\end{proof}

We are now ready to prove that the error $\sum_{n=0}^{N-1} \Big( T^n v - T_m^n v \Big)_0$ is exponentially small in $m$.

\begin{proposition} \label{prop: evaluating the truncation}
Let $x \in \mathbb{D}_r$, and $c \in \mathbb{C}$. Let $\rho_m = r^{m-1} K(r)$. For any $N,m \in \mathbb{N}$ with $m \geq 2$,
\begin{align*}
\left| \sum_{n=0}^{N-1} \hspace{-3pt} \bigg(  \hspace{-3pt} ( T^n - {T_m}^n ) v(x \, ; \, c)  \hspace{-3pt}\bigg)_{\!\!0} \right|
\leq 2\! \left( \! \log \frac{1}{1-rC} \right) \hspace{-3pt} \left( \!\frac{ {(1+\rho_m)}^{\!N-1} -\!1 - (N-1)\rho_m}{\rho_m} \hspace{-3pt} \right) 
\!+ \! \frac{ 2 (N-1) r^m C^m }{ m \big( 1 - rC \big) },
\end{align*}
where $C = \max_{i \in I} | f_i^{\top}(0) |$.
\end{proposition}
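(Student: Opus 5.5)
The plan is to peel off the $0$-th row of $T$. Since $b_{k,0}=0$, the constant $c$ never enters, and the $n=0$ term of the sum vanishes identically. For $n\geq 1$ I would write
\[
\big((T^n-T_m^n)v\big)_0=\sum_{k=1}^{m-1} b_{0,k}\big((T^{n-1}-T_m^{n-1})v\big)_k+\sum_{k\geq m} b_{0,k}\big(T^{n-1}v\big)_k .
\]
Here $v=v(x\,;\,c)$, and $T_m^{n-1}v$ has vanishing coordinates for $k\geq m$. The two pieces should produce the two terms of the stated bound.

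\emph{Tail piece.} Iterating \eqref{eq: Tv} shows that $T^{n-1}v$ is a weighted difference of vectors $v(y\,;\,\cdot)$ with $|y|<r$. The weights are $w_{\lbar{i}}$, summing to $1$ on each side, and the inclusion $|y|<r$ holds by Lemma \ref{lemma: real contraction implies complex contraction}. Hence $|(T^{n-1}v)_k|\leq 2r^k/k$ for $k\geq1$. The case $n=1$ is immediate since $|x|<r$. Using $|b_{0,k}|\leq C^k$, the tail is at most
\[
\sum_{k\geq m}\frac{2(rC)^k}{k}\leq \frac{2(rC)^m}{m(1-rC)}.
\]
Summing over $n=1,\dots,N-1$ gives the second term.

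\emph{Main piece.} I would use Proposition \ref{prop: integral representation for iterates} for the coordinates $1\leq k\leq m-1$ of $T^{n-1}v$ and $T_m^{n-1}v$. Multiplying by $b_{0,k}=\sum_i w_i f_i^{\top}(0)^k$ and summing over $k\leq m-1$ inside the integrals turns $-\sum_k b_{0,k}(-F)^k/k$ into $H_m(F)$, where
\[
H_m(w)=\sum_i w_i\sum_{k=1}^{m-1}\frac{-(-f_i^{\top}(0)w)^k}{k}
\]
is the truncation of $\sum_i w_i(\ell_i(w)-\ell_i(0))$. Reading the nested integrals from the outermost variable inward, the first piece becomes
\[
\sum_{\lbar{i}\in I^{n-1}} w_{\lbar{i}}\big(\pazocal{L}_{i_1}\cdots\pazocal{L}_{i_{n-1}}-\pazocal{K}_{i_1}\cdots\pazocal{K}_{i_{n-1}}\big)H_m(x),
\]
since $\Phi_m=\Phi-z^{m-1}\Phi$ gives $\pazocal{K}_{i,m}$. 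The bookkeeping of the order of composition must be checked against \eqref{eq: definition of mathscr F}.

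Next I would apply Lemma \ref{lemma: telescoping of L and K}. Each summand $\pazocal{L}\cdots\pazocal{L}\,\pazocal{R}_{i_\ell}\pazocal{K}\cdots\pazocal{K}H_m$ vanishes at $0$ by \eqref{eq: LRKh0 is 0}, so its value at $x$ is bounded by its oscillation. Lemma \ref{lemma: basic properties of LRK} then bounds that oscillation by $\rho_m(1+\rho_m)^{n-1-\ell}\|H_m\|_{\mathrm{osc}}$. We have $\|H_m\|_{\mathrm{osc}}\leq 2\sup_{\mathbb{D}_r}|H_m|\leq 2\log\frac{1}{1-rC}$. Summing over $\ell$ gives $(1+\rho_m)^{n-1}-1$ times this constant. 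Summing over $n=1,\dots,N-1$ gives
\[
\frac{(1+\rho_m)^{N-1}-1}{\rho_m}-(N-1),
\]
which is exactly the first term.

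The main obstacle is the middle step. One has to verify rigorously that pairing with the finite row $b_{0,k}$, $k<m$, commutes with the nested contour integrals; this is a finite sum, so it is fine. One also has to check that the $n$-fold integral really factors as a composition of the operators $\pazocal{L}_i$ and $\pazocal{K}_i$ acting on $\pazocal{H}(\mathbb{D}_r)$, with the correct ordering of letters and with each intermediate function holomorphic and of finite oscillation on $\mathbb{D}_r$. I would first verify this for $n-1=1$ directly from Proposition \ref{prop: integral representation}, then induct using the definition of $\mathscr{F}$.
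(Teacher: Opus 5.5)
Your proposal is correct and follows the paper's proof. It uses the same split of row $0$ into a truncated part $k\leq m-1$ and a tail $k\geq m$, the same bound $|(T^{n-1}v)_k|\leq 2r^k/k$ for the tail, and the same combination of Proposition \ref{prop: integral representation for iterates}, Lemma \ref{lemma: telescoping of L and K} and Lemma \ref{lemma: basic properties of LRK} for the truncated part; the only cosmetic difference is that you contract against $b_{0,k}$ first and telescope the single polynomial $H_m$, whereas the paper telescopes each $h^{(k)}$ separately and then uses $\sum_{k<m} C^k\cdot 2r^k/k\leq 2\log\frac{1}{1-rC}$, which gives the identical bound.
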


\begin{proof}
Fix $x \in \mathbb{D}_r$ and $c \in \mathbb{C}$. For each $(i_1, \ldots, i_n) \in I^n$, $1 \leq k \leq m-1$, and $n \geq 1$, we define
\begin{equation*}
A_n^{(k)}(i_1, \ldots, i_n)(x)
=
- \frac{1}{(2\pi\mathrm{i})^n} \oint_{|z_1| = r} \cdots \oint_{|z_n| = r} \left( \prod_{\ell = 1}^n \frac{ 1 }{ 1 - z_\ell } \right) \frac{ \left( - \mathscr{F}_{z_n, \ldots, z_1}^{i_n, \ldots, i_1}(x) \right)^k }{k} dz_n \cdots dz_1,
\end{equation*}
\begin{equation*}
B_{m, n}^{(k)}(i_1, \ldots, i_n)(x)
=
- \frac{1}{ (2\pi\mathrm{i})^n } \oint_{|z_1| = r} \cdots \oint_{|z_n| = r} \left( \prod_{\ell = 1}^n \frac{ 1 - {z_\ell}^{m-1} }{ 1 - z_\ell } \right) \frac{ \left( - \mathscr{F}_{z_n, \ldots, z_1}^{i_n, \ldots, i_1}(x) \right)^k }{k} dz_n \cdots dz_1.
\end{equation*}
Define $h^{(k)} \in \pazocal{H}(\mathbb{D}_r)$ by $h^{(k)}(w) = - \frac{(-w)^k}{k}$ for each $k \geq 1$. Then, for any $x \in \mathbb{D}_r$,
\[
A_n^{(k)}(i_1, \ldots, i_n)(x) = \pazocal{L}_{i_1} \cdots \pazocal{L}_{i_n} h^{(k)}(x), \qquad
B_{m, n}^{(k)}(i_1, \ldots, i_n)(x) = \pazocal{K}_{i_1, m} \cdots \pazocal{K}_{i_n, m} \, h^{(k)}(x).
\]
By Lemma \ref{lemma: telescoping of L and K} we have
\begin{align} \label{eq: telescoping applied}
A_n^{(k)}(i_1, \ldots, i_n)(x) - B_{m, n}^{(k)}(i_1, \ldots, i_n)(x)
=
\sum_{\ell = 1}^n \pazocal{L}_{i_1} \cdots \pazocal{L}_{i_{\ell-1}} \pazocal{R}_{i_{\ell}} \pazocal{K}_{i_{\ell+1}, m} \cdots \pazocal{K}_{i_n, m} \, h^{(k)}(x).
\end{align}

Let $g \in \pazocal{H}(\mathbb{D}_r)$. For any composition $S$ of the operators $\pazocal{L}_i$, $\pazocal{R}_{i,m}$, $\pazocal{K}_{i,m}$, we have $Sg(0) = 0$ by Lemma \ref{lemma: basic properties of LRK}. Hence $| Sg(w) | = | Sg(w) - Sg(0) | \leq \left\| Sg \right\|_{\mathrm{osc}}$ for any $w \in \mathbb{D}_r$.

Thus, combining Proposition \ref{prop: integral representation for iterates}, equation \eqref{eq: telescoping applied}, Lemma \ref{lemma: basic properties of LRK}, and $\| h^{(k)} \|_{\mathrm{osc}} \leq \frac{2r^k}{k}$, we have for any $n \geq 1$ and $1\leq k\leq m-1$,
\begin{align}
\left| \bigg( \big( T^n - {T_m}^n \big) v(x \, ; \, c) \bigg)_k \right|
&\leq \sum_{ \lbar{i} \in I^n} w_{\lbar{i}}
\Big| A_n^{(k)}(i_1, \ldots, i_n)(x) - B_{m, n}^{(k)}(i_1, \ldots, i_n)(x) \Big| \nonumber \\
&\leq
\sum_{ \lbar{i} \in I^n} w_{\lbar{i}} \Big\| \sum_{\ell = 1}^n \pazocal{L}_{i_1} \cdots \pazocal{L}_{i_{\ell-1}} \pazocal{R}_{i_{\ell}, m} \pazocal{K}_{i_{\ell+1}, m} \cdots \pazocal{K}_{i_n, m} \, h^{(k)} \Big\|_{\mathrm{osc}} \nonumber \\
&\leq \sum_{\ell = 1}^n 1^{\ell-1} \rho_m \big( 1 + \rho_m \big)^{n-\ell} \| h^{(k)} \|_{\mathrm{osc}}
\leq \frac{2 r^k}{k} \Big( \big( 1 + \rho_m \big)^n - 1 \Big). \label{eq: final bound for k-th entry}
\end{align}

For the $0$-th entry, we have for any $n \geq 1$,
\begin{flalign}
&\bigg( \big( T^n - {T_m}^n \big) v(x \, ; \, c) \bigg)_0 & \nonumber \\
&= \sum_{i \in I} w_i
\left\{
\sum_{k=1}^{m-1} \left( f_i^{\top}(0) \right)^k
\bigg( \big( T^{n-1} - {T_m}^{n-1} \big) v(x \, ; \, c) \bigg)_k
+ \sum_{k=m}^\infty \left( f_i^{\top}(0) \right)^k
\bigg( T^{n-1} v(x \, ; \, c) \bigg)_k
\right\}. \label{eq: final bound for 0-th entry}
\end{flalign}
Also note that for any $n \geq 0$,
\begin{equation} \label{final bound for k-th entry of T only}
\left| \bigg( T^n v(x \, ; \, c) \bigg)_k \right| = \left| \sum_{\lbar{i} \in I^n} w_{\lbar{i}} \pazocal{L}_{i_1} \cdots \pazocal{L}_{i_n} h^{(k)}(x) \right| \leq \sum_{\lbar{i} \in I^n} w_{\lbar{i}} \| h^{(k)} \|_{\mathrm{osc}} \leq \frac{2r^k}{k}.
\end{equation}
Let $C = \max_i \left| f_i^{\top}(0) \right|$. Combining equations \eqref{eq: final bound for k-th entry}, \eqref{eq: final bound for 0-th entry}, and \eqref{final bound for k-th entry of T only},
\begin{align*}
& \left| \sum_{n=0}^{N-1} \bigg( ( T^n - {T_m}^n ) v(x \, ; \, c) \bigg)_0 \right| \\
&\leq \sum_{n=1}^{N-1} \sum_{k=1}^{m-1} C^k \left| \bigg( (T^{n-1} - {T_m}^{n-1}) v(x \, ; \, c) \bigg)_k \right|
+ \sum_{n=1}^{N-1} \sum_{k=m}^\infty C^k \left| \bigg( T^{n-1} v(x \, ; \, c) \bigg)_k \right| \\
&\leq \sum_{n=2}^{N-1} \sum_{k=1}^{m-1} \frac{2r^k C^k}{k}\Big( \big( 1 + \rho_m \big)^{n-1} - 1 \Big)
+ (N-1)\sum_{k=m}^\infty C^k \frac{2 r^k}{k} \\
&\leq 2 \left(  \log \frac{1}{1-rC} \right) \left( \frac{ (1+\rho_m)^{N-1} -1 - (N-1)\rho_m}{\rho_m} \right) 
+ \frac{ 2 (N-1) r^m C^m }{ m \big( 1 - rC \big) }. & \qedhere
\end{align*}
\end{proof}

Recall $K(r)$, which we defined in equation \eqref{eq: definition of Kr} as
\[ K(r) = \frac{1}{2 \pi} \oint_{|z| = r} \frac{|dz|}{|1-z|}. \]
We have the following bound for $K(r)$, and its proof is included in Appendix \ref{appendix: proof of minor lemmas}
\begin{lemma} \label{lemma: bound for Kr}
We have
\[
K(r) \leq \min \left\{ \frac{r}{\sqrt{1-r^2}}, \hspace{8pt} \frac{2r}{\pi(1+r)} \left( \frac{\pi}{2} + \log \frac{1+r}{1-r} \right) \right\}.
\]
\end{lemma}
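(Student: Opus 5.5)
We prove the two bounds separately, since they come from different ideas. Parametrize the circle as $z = re^{\mathrm{i}\theta}$, so that $|dz| = r\,d\theta$ and
\[
K(r) = \frac{r}{2\pi}\int_0^{2\pi} \frac{d\theta}{|1-re^{\mathrm{i}\theta}|}, \qquad |1-re^{\mathrm{i}\theta}|^2 = 1 - 2r\cos\theta + r^2 .
\]

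\textbf{First bound.} We apply Cauchy--Schwarz against the normalized measure $d\theta/2\pi$:
\[
K(r) \leq r\left( \frac{1}{2\pi}\int_0^{2\pi} \frac{d\theta}{1-2r\cos\theta+r^2}\right)^{1/2}.
\]
The inner average is the classical Poisson-type integral, which can be evaluated by residues or by expanding $\frac{1}{|1-re^{\mathrm{i}\theta}|^2} = \sum_{j,k} r^{j+k}e^{\mathrm{i}(j-k)\theta}$ and keeping only the diagonal terms $j=k$. Either way it equals $\frac{1}{1-r^2}$, which gives $K(r) \le \frac{r}{\sqrt{1-r^2}}$. This part is routine.

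\textbf{Second bound.} Here we need a pointwise lower bound on the distance $|1-re^{\mathrm{i}\theta}|$ that separates the near-singular region from the rest. By symmetry,
\[
K(r) = \frac{r}{\pi}\int_0^{\pi} \frac{d\theta}{\sqrt{(1-r)^2 + 4r\sin^2(\theta/2)}} .
\]
Using $\sin(\theta/2) \ge \theta/\pi$ on $[0,\pi]$, we bound the denominator below by $\sqrt{(1-r)^2 + 4r\theta^2/\pi^2}$. Substituting $t = 2\sqrt r\,\theta/\pi$, the integral becomes an $\operatorname{arsinh}$-type expression of the form
\[
\frac{r}{2\sqrt r}\,\operatorname{arsinh}\!\left(\frac{2\sqrt r}{1-r}\right).
\]
We would then need to massage this into the stated form $\frac{2r}{\pi(1+r)}\bigl(\frac{\pi}{2}+\log\frac{1+r}{1-r}\bigr)$. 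The prefactor $\frac{1}{1+r}$ together with $\log\frac{1+r}{1-r}$ suggests a different splitting is intended. Namely, one could use the inequality
\[
|1-re^{\mathrm{i}\theta}| \ge \frac{1+r}{2}\cdot\max\Bigl\{\tfrac{2(1-r)}{1+r},\ \tfrac{2\theta}{\pi}\Bigr\}\text{-like},
\]
that is, a convexity bound $|1-re^{\mathrm{i}\theta}| \ge (1-r) + \frac{2r\theta}{\pi}$. This follows since $|1-re^{\mathrm{i}\theta}|$ is increasing in $\theta$ and $\sqrt{a^2+b^2}\ge$ a suitable combination; alternatively one can compare the chord to the arc. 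With this bound the integral is explicit:
\[
\frac{r}{\pi}\int_0^\pi \frac{d\theta}{(1-r)+2r\theta/\pi} = \frac{1}{2}\log\frac{1+r}{1-r}.
\]
This alone is not the right constant, so the final form likely arises from splitting the integral into two ranges. On a range near $\theta=0$, the linear lower bound above yields the log term. On the complementary range, $|1-re^{\mathrm{i}\theta}| \ge$ something comparable to $\frac{1+r}{2}$ times a constant, and this yields the $\frac{\pi}{2}$ term. Both pieces are then scaled by $\frac{2r}{\pi(1+r)}$.

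\textbf{Main obstacle.} The difficulty is choosing the lower bound for $|1-re^{\mathrm{i}\theta}|$, and the split point, so that the constants come out exactly as stated. I would first try the bound $|1-re^{\mathrm{i}\theta}| \ge \frac{1+r}{2}\bigl|1-e^{\mathrm{i}\phi}\bigr|$-type estimates after the change of variable $u=\tan(\theta/2)$. This substitution turns the integrand into a rational-looking function whose integral produces both an arctangent part, giving $\frac{\pi}{2}$, and a logarithm. Since the lemma only claims an upper bound by the minimum, each inequality can be proved separately with any valid estimate, and a slightly cruder constant could be tolerated by adjusting the argument.
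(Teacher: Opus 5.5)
Your first bound is correct and matches the paper's argument: Cauchy--Schwarz, followed by keeping only the diagonal terms of $\sum_{j,k} r^{j+k}e^{\mathrm{i}(j-k)\theta}$.

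The second bound, however, is not proved. Three things go wrong.
\begin{enumerate}
\item The Jordan-inequality route is valid, but it yields $K(r)\le \sqrt r\,\mathrm{artanh}\sqrt r$. This is strictly weaker than the claimed estimate.
\begin{itemize}
\item As $r\to0$ it is $r+\frac13 r^2+\cdots$, versus $r+(\frac4\pi-1)r^2+\cdots$.
\item As $r\to1$ its coefficient of $\log\frac1{1-r}$ is $\frac12$, versus $\frac1\pi$.
\item At $r=\frac12$ it is about $0.623$, versus $0.566$.
\end{itemize}
So no ``massaging'' can turn it into the lemma.
\item Your lower bound $|1-re^{\mathrm{i}\theta}|\ge(1-r)+\frac{2r\theta}{\pi}$ is false. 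The left side has zero derivative at $\theta=0$: indeed $|1-re^{\mathrm{i}\theta}|=(1-r)+\frac{r\theta^2}{2(1-r)}+O(\theta^4)$. It therefore lies below that chord for small $\theta>0$; at $r=\frac12$, $\theta=0.1$ the two sides are about $0.505$ and $0.532$.
\item The splitting and $\tan(\theta/2)$ ideas are left unspecified. Your closing remark that a cruder constant ``could be tolerated'' concedes that the inequality as stated is not established.
\end{enumerate}

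The missing observation is that the prefactor $\frac{2r}{\pi(1+r)}$ is exact, not an artifact of splitting. One has $|1-re^{\mathrm{i}\theta}|=(1+r)\sqrt{1-k^2\cos^2(\theta/2)}$ with $k=\frac{2\sqrt r}{1+r}$. Hence $K(r)=\frac{2r}{\pi(1+r)}\mathscr{K}(k)$, where $\mathscr{K}$ is the complete elliptic integral of the first kind. Since $1-k^2=\bigl(\frac{1-r}{1+r}\bigr)^2$, the claim is equivalent to $\mathscr{K}(k)\le\frac\pi2+\frac12\log\frac1{1-k^2}$.

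The paper proves this by differentiating: $\mathscr{K}'(x)=\frac{x}{1-x^2}\int_0^{\pi/2}\frac{\cos^2t}{\sqrt{1-x^2\sin^2t}}\,dt\le\frac{x}{1-x^2}$, because $\sqrt{1-x^2\sin^2t}\ge\cos t$. It then integrates from $\mathscr{K}(0)=\frac\pi2$.

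Your $u=\tan(\theta/2)$ idea can also be completed.
\begin{itemize}
\item The substitution gives exactly $K(r)=\frac{2r}{\pi(1+r)}\int_0^\infty\frac{du}{\sqrt{(1+u^2)(a^2+u^2)}}$ with $a=\frac{1-r}{1+r}$.
\item Write $p=(1+u^2)^{-1/2}\le q=(a^2+u^2)^{-1/2}$. The identity $p^2+q-p-pq=(q-p)(1-p)\ge0$ bounds the integrand by $\frac1{1+u^2}+\frac1{\sqrt{a^2+u^2}}-\frac1{\sqrt{1+u^2}}$.
\item The integral of this bound is $\frac\pi2+\log\frac1a=\frac\pi2+\log\frac{1+r}{1-r}$, which is exactly the claim.
\end{itemize}

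Either way, you must work with the exact elliptic-integral form. Jordan-type lower bounds on $|1-re^{\mathrm{i}\theta}|$ in the $\theta$-variable lose a factor $\frac\pi2$ in front of the logarithm.
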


Combining Proposition \ref{prop: cutting the tail of the Neumann series}, Proposition \ref{prop: evaluating the truncation}, and Lemma \ref{lemma: bound for Kr}, we obtain the following.
\begin{theorem} \label{thm: precise bounds}
Under the notations of Theorem \ref{thm: main theorem 1}, take $0<r<1$ so that $f_i([-1,1]) \subset [-r,r]$ for all $i \in I$. Let
\[
E = \frac{1}{ 1 - r } \left( \sum_{i \in I} w_i \frac{ \left| f_i^{\top}(0) \right|}{ 1 + \sqrt{ 1 - {\left| f_i^{\top}(0) \right|}^2 } } \right) \left( \sum_{i \in I} w_i d_{\mathrm{hyp}}\big( f_i(0), 0 \big) \right).
\]
Let $C = \max_{i \in I} | f_i^{\top}(0) |$. Then $0 \leq C < 1$, and we have for any natural number $n, m \geq 2$ that
\begin{align*}
\left| \lambda - \sum_{j=0}^{n-1} \big( {T_m}^j \boldsymbol{v} \big)_0 \right|
\leq E r^{n-1} \hspace{-2pt}
+ 2 \left(  \log \frac{1}{1-rC} \right) \hspace{-3pt} \left( \frac{ \big( 1 + \rho_m \big)^{n-1} -1 - (n-1)\rho_m}{\rho_m} \hspace{-2pt} \right)  \hspace{-2pt}
+ \frac{ 2 (n-1) r^m C^m }{ m \big( 1 - rC \big) }.
\end{align*}
Here, $\rho_m = r^{m-1} K(r)$ and
\[
0 < K(r) \leq \min \left\{ \frac{r}{\sqrt{1-r^2}}, \hspace{8pt} \frac{2r}{\pi(1+r)} \left( \frac{\pi}{2} + \log \frac{1+r}{1-r} \right) \right\}.
\]
\end{theorem}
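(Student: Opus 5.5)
The bound follows by splitting the error with the triangle inequality at the intermediate quantity $\sum_{j=0}^{n-1}(T^j\boldsymbol v)_0$:
\[
\Bigl|\lambda-\sum_{j=0}^{n-1}(T_m^j\boldsymbol v)_0\Bigr|
\le \Bigl|\lambda-\sum_{j=0}^{n-1}(T^j\boldsymbol v)_0\Bigr|
+\Bigl|\sum_{j=0}^{n-1}\bigl((T^j-T_m^j)\boldsymbol v\bigr)_0\Bigr|.
\]
For the first term I will use Theorem \ref{thm: main theorem 1} to replace $\lambda$ by $\sum_{k\ge0}(T^k\boldsymbol v)_0$. Proposition \ref{prop: cutting the tail of the Neumann series} then bounds it by $Er^{n-1}$ with exactly the constant $E$ in the statement. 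The assertion $0\le C<1$ is equation \eqref{eq: bounding the transpose of gi} applied at $0$.

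For the second term I will apply Proposition \ref{prop: evaluating the truncation} with $N=n$. That proposition is stated for a single vector $v(x\,;\,c)$ with $x\in\mathbb{D}_r$. Here $\boldsymbol v=\sum_i w_i v(f_i(0)\,;\,\ell_i(0))$ is a convex combination of such vectors, since $f_i(0)\in[-r,r]$. By linearity of $T^j$ and $T_m^j$ and $\sum_i w_i=1$, the error for $\boldsymbol v$ is a convex combination of the errors for the individual $v(f_i(0);\ell_i(0))$. Each of these satisfies the same uniform bound, because the right-hand side of Proposition \ref{prop: evaluating the truncation} does not depend on $x$ or $c$. So the bound passes to $\boldsymbol v$ unchanged.

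The one subtlety is that $f_i(0)$ may lie on $\partial\mathbb{D}_r$, whereas the proposition requires $x\in\mathbb{D}_r$. I plan to handle this by strict positivity. Since $f_i([-1,1])$ is a compact subset of $(-1,1)$, I may replace $r$ by a slightly smaller admissible value so that the inclusion becomes strict. Alternatively, I will keep $r$ and pass to the limit $x\to f_i(0)$ from inside $\mathbb{D}_r$. Each finite-$j$ coordinate $(T^jv(x;c))_0$ is continuous in $x$ on $\overline{\mathbb{D}_r}$: by the computation in the proof of Theorem \ref{thm: main theorem 1}, it is an explicit finite combination of $\ell_i$ and $f_i$ evaluated along finitely many compositions. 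The $T_m^j$ terms are polynomials in $x$, hence also continuous. The uniform bound therefore survives the limit. I expect this to be the only non-routine point, and I would take the limiting route so that the statement holds for the given $r$ itself.

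Finally, the bound on $K(r)$ is Lemma \ref{lemma: bound for Kr}. Its positivity is clear, since the integrand $1/|1-z|$ is positive and continuous on $|z|=r<1$. The requirement $n,m\ge2$ matches the hypothesis $m\ge2$ of Proposition \ref{prop: evaluating the truncation}. Adding the two bounds gives the stated inequality.
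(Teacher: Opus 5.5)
Your proposal is correct and is essentially the paper's proof. The paper also splits at $\sum_{j=0}^{n-1}(T^j\boldsymbol v)_0$, bounds the tail by Proposition~\ref{prop: cutting the tail of the Neumann series}, and uses linearity with $\sum_i w_i=1$ to apply Proposition~\ref{prop: evaluating the truncation} to each $v(f_i(0)\,;\,\ell_i(0))$.

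The boundary subtlety you flag cannot occur, which is why the paper applies the proposition directly. Since $f_i$ has no pole on $[-1,1]$, it is strictly monotone there, so $f_i(0)$ lies strictly between $f_i(-1),f_i(1)\in[-r,r]$ and hence $|f_i(0)|<r$. Your limiting argument is valid but not needed.
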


\begin{proof}
By equation \eqref{eq: bounding the transpose of gi}, we have $0 \leq C <1$. By triangle inequality,
\begin{flalign*}
\left| \lambda - \sum_{j=0}^{n-1} \big( {T_m}^j \boldsymbol{v} \big)_0 \right|
&= \left| \sum_{j=0}^\infty \big( T^j \boldsymbol{v} \big)_0 - \sum_{j=0}^{n-1} \big( {T_m}^j \boldsymbol{v} \big)_0 \right| &
\end{flalign*} \\[-25pt]
\begin{flalign*}
& \leq \left| \sum_{j=0}^\infty \big( T^j \boldsymbol{v} \big)_0 - \sum_{j=0}^{n-1} \big( T^j \boldsymbol{v} \big)_0 \right|
+ \left| \sum_{j=0}^{n-1} \big( T^j \boldsymbol{v} \big)_0 - \sum_{j=0}^{n-1} \big( {T_m}^j \boldsymbol{v} \big)_0 \right| & \\
& \leq \left| \sum_{j=0}^\infty \big( T^j \boldsymbol{v} \big)_0 - \sum_{j=0}^{n-1} \big( T^j \boldsymbol{v} \big)_0 \right|
+ \sum_{i \in I} w_i \left| \sum_{j=0}^{n-1} \big( T^j v\big( f_i(0)\, ;\, \ell_i(0) \big) \big)_0 - \sum_{j=0}^{n-1} \big( {T_m}^j v\big( f_i(0)\, ;\, \ell_i(0) \big) \big)_0 \right|. &
\end{flalign*}
The claim follows by applying Proposition \ref{prop: cutting the tail of the Neumann series} to the first term and Proposition \ref{prop: evaluating the truncation} to the second term, and using $\sum_{i \in I} w_i = 1$.
\end{proof}

\begin{proof}[Proof of Theorem \ref{thm: main theorem 2}]
Set
\[
\rho_M = r^{M-1}K(r), \qquad C=\max_{i\in I}|f_i^\top(0)|.
\]
By equation \eqref{eq: bounding the transpose of gi}, we have $0 \leq C <1$.

Now let $\vep>0$ be given. Since $0<r<1$, we can take a natural number $N=O(\log(1/\varepsilon))$ so that
\[
E r^{N-1} < \frac{\vep}{2}.
\]
Next, we consider $M$ satisfying $(N-1)\rho_M \leq 1$. Note that for any $t \geq 0$ we have $(1+t)^{N-1} \leq e^{(N-1)t}$ and $e^t-1-t \leq \frac12 t^2 e^t$. These combined, we get
\begin{align*}
\frac{ (1+\rho_M)^{N-1} -1 - (N-1)\rho_M}{\rho_M}
&\leq \frac{ e^{(N-1)\rho_M} - 1 - (N-1)\rho_M}{\rho_M} \\
&\leq \frac{ (N-1)^2 \rho_M^2 e^{(N-1)\rho_M} }{ 2\rho_M }
\leq \frac e2 (N-1)^2 \rho_M.
\end{align*}
By the definition of $\rho_M$ and $0<r<1$, we can take a natural number $M = O\big(\log(N/\varepsilon)\big)$ with
\[
(N-1)\rho_M \leq 1, \qquad e \times \left(  \log \frac{1}{1-rC} \right) (N-1)^2 \rho_M
+ \frac{ 2 (N-1) r^M C^M }{ M \big( 1 - rC \big) } < \frac{\vep}{2}.
\]
With these choices of $N$ and $M$, by Theorem \ref{thm: precise bounds},
\[
\left| \lambda - \sum_{j=0}^{N-1} \big( {T_M}^j \, \boldsymbol{v} \big)_0 \right|
\leq Er^{N-1} + e \times \left(  \log \frac{1}{1-rC} \right) (N-1)^2 \rho_M
+ \frac{ 2 (N-1) r^M C^M }{ M \big( 1 - rC \big) } < \vep.
\]

By the definition of $T$, its $(k,n)$-entry can be assembled in $O\!\big(\min\{k,n\} \times \#I \big)$ arithmetic operations. Therefore, the computational cost to calculate $T_M$ is
\[
O\!\left( \#I \sum_{k=1}^M \sum_{n=1}^M \min\{k,n\}\right)
= O\!\left(\frac{M(M+1)(2M+1)}{6}\right)
= O\!\left(M^3\right).
\]
Moreover, forming the first $M$ entries of vectors $T_M^k \boldsymbol{v}$ for all $0\leq k \leq N-1$ via repeated finite matrix--vector multiplication costs $O(NM^2)$ arithmetic operations. Hence the total complexity is
\[
O\!\left(M^3\right) + O\!\left(NM^2\right).
\]
In particular, when $N = O \! \left( \log{(1/\vep)} \right)$ and $M = O \! \left( \log{(N/\vep)} \right)$, this yields the polynomial-time bound
\[
O\!\left(M^3\right) + O\!\left(NM^2\right)
= O\!\left(\bigl(\log(1/\vep)\bigr)^3\right).
\]
\end{proof}

\begin{remark}
Consider parameterized families of invertible positive $2 \times 2$ matrices $\{A_i(t)\}_{i \in I}$ and probability vectors $\big( w_i(t) \big)_{i \in I}$, and denote by $\lambda(t)$ the Lyapunov exponent. The integral representations for $T$ (and $T_m$) in Proposition \ref{prop: integral representation} and Proposition \ref{prop: integral representation for iterates} can be used to prove analyticity of $\lambda(t)$ under uniform positivity of entries of $\{A_i(t)\}_{i \in I}$ and suitable regularity assumptions on $t$.
\end{remark}

\section{Proof of Theorem \ref{thm: main theorem 3} and Applications} \label{section: proof of main theorem 3 and applications}

\subsection{Proof of Theorem \ref{thm: main theorem 3}}

For $\alpha, \beta \in \widehat{\mathbb{R}}$ with $\alpha \ne \beta$, let $L(\alpha, \beta) \subset \widehat{\mathbb{R}}$ be the connected component of $\widehat{\mathbb{R}} \setminus \{\alpha,\beta\}$ whose closure is the oriented closed arc from $\alpha$ to $\beta$ in the standard cyclic order on $\widehat{\mathbb{R}}$.

The following ``pushing lemma'' is crucial for constructing a common strictly invariant arc in the proof of Theorem \ref{thm: main theorem 3}.
\begin{lemma} \label{lemma: Mobius pushing lemma}
Let $f \in \mathrm{PGL}_2(\mathbb{R})$ be a hyperbolic M\"obius transformation such that its attracting point $\alpha$ is a finite real number. Let $\beta \in \widehat{\mathbb{R}}$ be its repelling fixed point. Let $p = f^{-1}(\widehat{\infty}) \in \widehat{\mathbb{R}}$ be its pole. Then, for any $x \in \mathbb{R}$,
\begin{enumerate}
\item If $x \in L(\beta, \alpha) \cap L(p, \alpha) \cap (-\infty, \alpha)$, then $x < f(x)$, and
\item if $x \in L(\alpha, \beta) \cap L(\alpha, p) \cap (\alpha, \infty)$, then $f(x) < x$.
\end{enumerate}
\end{lemma}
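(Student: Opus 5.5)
The plan is to study the sign of $g(x) = f(x) - x$ on the set appearing in (1), and then handle (2) symmetrically. The argument has three steps. First, show that this set is an interval with right endpoint $\alpha$. Second, show that $g$ is continuous and zero-free there. Third, read off the sign of $g$ near $\alpha$ from $|f'(\alpha)|<1$.

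\emph{Step 1 (the domain is an interval).} By definition, $L(\beta,\alpha)$ is the open arc running in the cyclic order from $\beta$ up to $\alpha$. A small real one-sided neighbourhood $(\alpha-\delta,\alpha)$ therefore lies in it. Intersected with $(-\infty,\alpha)\subset \mathbb{R}$, the arc $L(\beta,\alpha)$ becomes an interval of the form $(a_1,\alpha)$. Here $a_1=\beta$ if $\beta\in\mathbb{R}$ and $\beta<\alpha$; otherwise $a_1=-\infty$, because the arc then passes through $\widehat{\infty}$ before reaching $\alpha$. The same reasoning gives $L(p,\alpha)\cap(-\infty,\alpha)=(a_2,\alpha)$. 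If $p=\alpha$ this set would be degenerate, but $p\neq\alpha$ since $f(\alpha)=\alpha$ is finite.

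Hence the set in (1) equals $J=(\max\{a_1,a_2\},\alpha)$. This is a connected open interval with right endpoint $\alpha$, and it contains none of $\beta$, $p$, or $\widehat{\infty}$. I expect this cyclic-order bookkeeping to be the only delicate point. In particular I will check carefully that ``$L(\beta,\alpha)$'' is the side of $\alpha$ approached from the left in the real order. Case (2) mirrors this, giving an interval $(\alpha,\min\{b_1,b_2\})$.

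\emph{Step 2 (no zeros).} Since $p\notin J$, $f$ is a finite real-analytic function on $J$, so $g$ is continuous there. A zero of $g$ in $J$ would be a finite fixed point of $f$. A hyperbolic map has exactly the two fixed points $\alpha,\beta$, and neither lies in $J$: $\alpha$ is an endpoint and $\beta$ is excluded. So $g$ has constant sign on $J$ by the intermediate value theorem.

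\emph{Step 3 (sign near $\alpha$).} Because $\alpha$ is finite, $f$ is differentiable at $\alpha$ with $|f'(\alpha)|<1$. This allows $f'(\alpha)<0$ in the orientation-reversing case, but in every case $f'(\alpha)-1<0$. Then
\[
g(x)=(f'(\alpha)-1)(x-\alpha)+o(|x-\alpha|)\qquad (x\to\alpha).
\]
For $x<\alpha$ close to $\alpha$ this is positive, so $g>0$ on all of $J$, i.e.\ $x<f(x)$. For (2), $x>\alpha$ gives $g<0$ near $\alpha$, hence $f(x)<x$ on the whole interval.
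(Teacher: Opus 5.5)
Your proof is correct and follows the paper's argument: $g(x)=f(x)-x$ has constant sign on a connected set that avoids $\alpha$, $\beta$ and $p$, and that sign is fixed near $\alpha$ by $f'(\alpha)<1$. The differences are cosmetic. You verify explicitly that the set is the interval $(\max\{a_1,a_2\},\alpha)$, whereas the paper only asserts connectedness. You also treat the affine case $p=\widehat{\infty}$ together with the general case, whereas the paper handles it separately.
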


\begin{proof}
First assume $p \ne \widehat{\infty}$. Then, $\widehat{\infty}$ is not a fixed point, so $\alpha, \beta \in \mathbb{R}$. Define $g(y) = f(y) - y$ on $\mathbb{R} \setminus \{p\}$, then $g$ is continuous, and it is $0$ if and only if $y \in \{\alpha, \beta\}$. Let $I_1 = L(\beta, \alpha) \cap L(p, \alpha) \cap (-\infty, \alpha)$. Since $I_1 \cap \{ \alpha, \beta, p \} = \varnothing$ and $I_1$ is connected, $g$ has a constant sign on $I_1$. Now,
\[
\lim_{y \to \alpha-0} \frac{ f(y) - f(\alpha) }{ y - \alpha } = f'(\alpha) < 1.
\]
Then, for any $y < \alpha$ close enough to $\alpha$, we have $g(y) = f(y) - y > f(\alpha) - \alpha = 0$. Thus, $g(x) > 0$ for all $x \in I_1$. The proof of assertion (2) is identical.

Next, assume $p = \widehat{\infty}$. Then, $\widehat{\infty}$ is a fixed point, and $\beta = \widehat{\infty}$. Also, we have $f(x) = ax+b$ with some $a, b \in \mathbb{R}$. Since $|f'(\alpha)| < 1$, we have $a \leq |a| < 1$. Therefore, $g(y) = f(y) - y = (a-1)(y-\alpha)$ is always positive for $y < \alpha$ and negative for $y > \alpha$, proving the claim.
\end{proof}

\begin{lemma} \label{lemma: existence of conjugation and invariant arc}
Let $\{f_i\}_{i \in I} \subset \mathrm{PGL}_2(\mathbb{R})$ be a family of M\"obius transformations. For a M\"obius transformation $\phi \in \mathrm{PGL}_2(\mathbb{R})$ and $i \in I$, define $g_i \in \mathrm{PGL}_2(\mathbb{R})$ by
\[
g_i = \phi \circ {f_i} \circ \phi^{-1}.
\]
Let $J$ be a closed arc, and suppose $\phi(J) = [-1,1]$. Then, $g_i( [-1, 1] ) \subset (-1, 1)$ for all $i \in I$ if and only if $J$ is a common strictly invariant arc for $\{f_i\}_{i \in I}$.
\end{lemma}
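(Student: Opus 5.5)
The proof will be a direct transport of sets through the homeomorphism $\phi:\widehat{\mathbb{R}}\to\widehat{\mathbb{R}}$. First, since $\phi\in\mathrm{PGL}_2(\mathbb{R})$ acts as a homeomorphism of $\widehat{\mathbb{R}}$, it maps closed arcs to closed arcs. It also maps relative interiors to relative interiors: a closed arc $J$ that is neither a point nor all of $\widehat{\mathbb{R}}$ has relative interior equal to $J$ minus its two endpoints, and the endpoints are characterized topologically. Because $\phi(J)=[-1,1]$, we therefore get $\phi(\mathring{J})=(-1,1)$, where $(-1,1)$ is the relative interior of $[-1,1]$ in $\widehat{\mathbb{R}}$. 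Degenerate arcs cannot occur, since $\phi(J)=[-1,1]$ is a proper nondegenerate arc and $\phi$ is a bijection.

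Next, we rewrite the condition $g_i([-1,1])\subset(-1,1)$ using the definition of $g_i$:
\[
g_i([-1,1])=\phi\bigl(f_i(\phi^{-1}([-1,1]))\bigr)=\phi\bigl(f_i(J)\bigr).
\]
Since $\phi$ is a bijection of $\widehat{\mathbb{R}}$, the inclusion $\phi(f_i(J))\subset\phi(\mathring{J})=(-1,1)$ holds if and only if $f_i(J)\subset\mathring{J}$. Applying this equivalence for every $i\in I$ gives both directions at once: $g_i([-1,1])\subset(-1,1)$ for all $i$ exactly when $J$ is a common strictly invariant arc in the sense of the definition.

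The only point needing any care is the identification $\phi(\mathring{J})=(-1,1)$, i.e.\ that relative interiors in $\widehat{\mathbb{R}}$ are preserved. I would justify it by noting that relative interior is a purely topological notion (the interior of $J$ as a subset of the space $\widehat{\mathbb{R}}$), so any homeomorphism of $\widehat{\mathbb{R}}$ carries the interior of $J$ onto the interior of $\phi(J)$. The interior of $[-1,1]$ in $\widehat{\mathbb{R}}$ is $(-1,1)$, since $\pm1$ are limits of points outside $[-1,1]$. No part of the argument seems to be a serious obstacle.
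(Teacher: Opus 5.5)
Your proposal is correct and follows the same route as the paper: rewrite $g_i([-1,1])=\phi\bigl(f_i(J)\bigr)$, use $\phi(\mathring{J})=(-1,1)$, and invoke bijectivity of $\phi$ on $\widehat{\mathbb{R}}$ to get both directions. Your explicit justification that a homeomorphism of $\widehat{\mathbb{R}}$ carries the interior of $J$ onto the interior of $\phi(J)$ fills in a step the paper uses without comment.
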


\begin{proof}
Assume that $\phi(J) = [-1,1]$ and $g_i( [-1, 1] ) \subset (-1, 1)$ for all $i \in I$. Since $\phi$ is a homeomorphism of $\widehat{\mathbb{R}}$, we have for any $i \in I$
\[
f_i(J) = \phi^{-1} \circ g_i \circ \phi(J) = \phi^{-1} \circ g_i ( [-1, 1] ) \subset \phi^{-1}(-1,1) = \mathring{J}.
\]
Conversely, if $J$ is a common strictly invariant arc for $\{f_i\}_{i \in I}$,
\[
g_i([-1,1]) = \phi \circ {f_i} \circ \phi^{-1} ([-1,1]) = \phi \circ f_i( J ) \subset \phi ( \mathring{J} ) = (-1,1).
\]
\end{proof}

\begin{lemma} \label{lemma: existence of invariant arc and coordinate change}
Let $\{A_i\}_{i\in I}$ be a finite family of invertible non-negative $2 \times 2$ matrices. Then, the following are equivalent.
\begin{enumerate}
\item $\{[F(A_i)]\}_{i\in I}$ admits a common strictly invariant arc $J$.
\item There is $P \in \mathrm{GL}_2(\mathbb{R})$ such that $M_i = P A_i P^{-1}$ is a positive matrix for every $i \in I$.
\end{enumerate}
\end{lemma}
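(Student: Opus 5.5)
The plan is to use that $F$ is, up to a scalar, conjugation by a fixed matrix. Indeed $\frac12(1+x,1-x)^{\top} = Q\,(x,1)^{\top}$ with $Q=\frac12\begin{pmatrix}1&1\\-1&1\end{pmatrix}$, and a direct check gives $F(A) = Q^{-1}AQ$. Hence $[F(AB)]=[F(A)]\circ[F(B)]$, $[F(P^{-1})]=[F(P)]^{-1}$, and every $\phi\in\mathrm{PGL}_2(\mathbb{R})$ equals $[F(P)]$ for some $P\in\mathrm{GL}_2(\mathbb{R})$, namely $P=QRQ^{-1}$ for any representative $R$ of $\phi$. With this dictionary, Lemma \ref{lemma: existence of conjugation and invariant arc} reduces everything to one statement: an invertible matrix $M$ satisfies $[F(M)]([-1,1])\subset(-1,1)$ if and only if $\pm M$ is positive. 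Geometrically, this says that $M$ maps the closed nonnegative quadrant minus the origin into the union of the open positive and open negative quadrants.

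\textbf{(2)$\Rightarrow$(1).} Let $M_i=PA_iP^{-1}$ be positive. Each $M_i$ sends every nonzero nonnegative vector to a strictly positive vector, so $[F(M_i)]([-1,1])\subset(-1,1)$. Set $\phi=[F(P)]$. Then
\[
[F(M_i)]=\phi\circ f_i\circ\phi^{-1}.
\]
Let $J=\phi^{-1}([-1,1])$, which is a closed arc. Lemma \ref{lemma: existence of conjugation and invariant arc} then shows that $J$ is a common strictly invariant arc.

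\textbf{(1)$\Rightarrow$(2).} First I will check that $J$ is a nondegenerate proper arc. It cannot be a single point, because $f_i(J)\subset\mathring{J}$ forces $\mathring{J}\neq\varnothing$. I will exclude the degenerate case $J=\widehat{\mathbb{R}}$, which is implicitly not meant by ``arc''. So $J$ has two distinct endpoints. Since $\mathrm{PGL}_2(\mathbb{R})$ acts transitively on ordered pairs of distinct points, and composing with $x\mapsto -x$ is allowed, there is a $\phi$ with $\phi(J)=[-1,1]$. By Lemma \ref{lemma: existence of conjugation and invariant arc}, $g_i=\phi f_i\phi^{-1}$ maps $[-1,1]$ into $(-1,1)$. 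Write $\phi=[F(P)]$ and set $M_i=PA_iP^{-1}$, so $[F(M_i)]=g_i$.

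Now consider the path $t\mapsto M_i\big((1-t)e_1+te_2\big)$ for $t\in[0,1]$. It avoids $0$, and its projective class stays in the open positive-quadrant lines. By connectedness, the whole path lies in one open quadrant, either the positive or the negative one. Hence either $M_i$ or $-M_i$ is a positive matrix.

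\textbf{The main obstacle is the sign.} I would rule out $-M_i$ positive by spectral reasoning. If $-M_i$ is positive, Perron--Frobenius gives it a simple eigenvalue $\rho>0$ that strictly dominates the other eigenvalue in modulus. Thus $M_i$, and hence $A_i$ (they have the same spectrum), has eigenvalues $-\rho$ and $\sigma$ with $|\sigma|<\rho$. But $A_i$ is nonnegative, so by Perron--Frobenius its spectral radius $\rho$ is itself an eigenvalue. That forces $\sigma=\rho$, a contradiction. Therefore every $M_i$ is positive, which establishes (2).
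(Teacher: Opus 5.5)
Your proof is correct, and its skeleton matches the paper's. In both, you move $J$ to $[-1,1]$ by some $\phi$, invoke Lemma \ref{lemma: existence of conjugation and invariant arc}, lift $\phi$ to a matrix through the conjugation formula for $F$, and check that the conjugated matrices are positive. (The paper writes $F(M)=HMH^{-1}$, with $H=Q^{-1}$ in your notation.) The direction (2)$\Rightarrow$(1) is the same as the paper's.

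The difference lies in that final positivity check. The paper conjugates $F(A_i)$ by a matrix representing $\phi$ to get $G_i=\begin{pmatrix} a&b\\ c&d\end{pmatrix}$. It first shows $cx+d>0$ on $[-1,1]$: otherwise $\mathrm{Tr}(G_i)<0$, contradicting $\mathrm{Tr}(G_i)=\mathrm{Tr}(A_i)\ge 0$. It then reads off positivity of the entries of $H^{-1}G_iH$ from the inequalities $|a+b|<c+d$ and $|b-a|<d-c$. You replace this computation by the cone/connectedness argument giving ``$M_i$ or $-M_i$ is positive.'' That is cleaner and explains what those inequalities mean: $M_ie_1$ and $M_ie_2$ lie in the same open quadrant.

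You then fix the sign by Perron--Frobenius. That step is valid but heavier than needed. Since $M_i$ is genuinely conjugate to $A_i$, you have $\mathrm{Tr}(M_i)=\mathrm{Tr}(A_i)\ge 0$, whereas $-M_i$ positive would force $\mathrm{Tr}(M_i)<0$. This is exactly the paper's trace observation.

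You are right to flag that $J=\widehat{\mathbb{R}}$ must be excluded. For $A=I$ it satisfies the definition of a common strictly invariant arc while (2) fails. The paper uses this exclusion tacitly when it takes $\phi$ with $\phi(J)=[-1,1]$.

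One small slip: after sending the endpoints of $J$ to $\pm 1$, composing with $x\mapsto -x$ does not swap the arcs $[-1,1]$ and $\widehat{\mathbb{R}}\setminus(-1,1)$, since it preserves each of them. Use $x\mapsto 1/x$ instead, or send an interior point of $J$ to $0$ using transitivity on triples.
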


\begin{proof}
$\big[$ (1) $\Rightarrow$ (2) $\big]$ Let $f_i = [F(A_i)]$ for each $i \in I$. Suppose $J$ is a common strictly invariant arc for $\{f_i\}_{i \in I}$. Take $\phi \in \mathrm{PGL}_2(\mathbb{R})$ so that $\phi(J) = [-1,1]$. Take any representation $Q \in \mathrm{GL}_2(\mathbb{R})$ of $\phi$ and let $G_i = Q F(A_i) Q^{-1}$ for each $i \in I$. Let $g_i = [G_i] \in \mathrm{PGL}_2(\mathbb{R})$, then $g_i = \phi \circ f_i \circ \phi^{-1}$, so by Lemma \ref{lemma: existence of conjugation and invariant arc} we have
\[
g_i([-1,1]) \subset (-1,1)
\]
for every $i \in I$.

Now, by the definition of $F$ in equation \eqref{eq: definition of F}, we have for any $M \in \mathrm{GL}_2(\mathbb{R})$,
\begin{equation} \label{eq: another representation of F}
F(M) = H M H^{-1} \hspace{7pt} \text{with} \hspace{7pt} H=
\begin{pmatrix}
1 & -1 \\
1& 1
\end{pmatrix}.
\end{equation}

Fix $i \in I$ and let $G_i = \begin{pmatrix} a & b \\ c & d \end{pmatrix}$. We claim that $cx + d > 0$ for all $x \in [-1,1]$. Assume otherwise. Then we have $cx_0 + d < 0$ for some $x_0 \in [-1,1]$. Since $g_i$ cannot have a pole in $[-1,1]$, we have $cx + d < 0$ for all $x \in [-1,1]$. By $g_i([-1,1]) \subset (-1,1)$, we have $|g_i(\pm 1)| < 1$. Thus,
\[
|a+b| < -(c+d) =:U, \quad |-a+b| < -(-c+d)=:V.
\]
Then, $(a+b)-(b-a) \leq |a+b| + |b-a| < U+V$, and
\begin{align*}
\mathrm{Tr}(G_i)
=
a+d
&= \frac{1}{2} \Big( (a+b)-(b-a) \Big) + \frac{1}{2} \Big( (c+d)+(-c+d) \Big) \\
&< \frac{1}{2}(U+V) - \frac{1}{2}(U+V) = 0.
\end{align*}
However, by equation \eqref{eq: another representation of F}, we have $\mathrm{Tr}(G_i) = \mathrm{Tr}(F(A_i)) = \mathrm{Tr}(A_i) \geq 0$, a contradiction. Thus, $cx+d > 0$ for all $x \in [-1,1]$.

The inverse map of $F$ satisfies $F^{-1}(N) = H^{-1} N H$. We define $M_i = F^{-1}(G_i)$ for $i \in I$. Then,
\[
M_i = F^{-1}(G_i) =
\frac{1}{2} \begin{pmatrix}
(a+b) + (c+d) & (b-a)+(d-c) \\
(c+d) - (a+b)& (d-c)-(b-a)
\end{pmatrix}.
\]
Since we have $g_i(\pm1) \in (-1,1)$ and $c + d > 0$ and $-c+d > 0$,
\[
|a+b| < c+d, \quad |b-a| < d-c.
\]
Applying these inequalities, we see that all the entries of $M_i$ are positive.

The following calculation completes the proof, where $P = H^{-1}QH$.
\[
M_i = F^{-1}(G_i) = H^{-1} Q F(A_i) Q^{-1} H = \bigg( H^{-1} Q H \bigg) A_i \bigg( H^{-1} Q H \bigg)^{-1}.
\]

$\big[$ (2) $\Rightarrow$ (1) $\big]$ Suppose $M_i = P A_i P^{-1}$ is a positive matrix for some $P \in \mathrm{GL}_2(\mathbb{R})$ for all $i \in I$. Then,
\begin{equation*}
F(M_i)
= H P A_i P^{-1} H^{-1}
= H P H^{-1} H A_i H^{-1} H P^{-1} H^{-1}
= \bigg( HPH^{-1} \bigg) F(A_i) {\bigg( HPH^{-1} \bigg)}^{-1}.
\end{equation*}
Let $\phi = \big[ HPH^{-1} \big] \in \mathrm{PGL}_2(\mathbb{R})$. Then the above equation implies
\[
[F(M_i)] = \phi \circ f_i \circ \phi^{-1}.
\]
Let $g_i = [F(M_i)]$. Since $\{M_i\}_{i \in I}$ are positive matrices, $g_i([-1,1]) \subset (-1,1)$. Let $J = \phi^{-1}([-1,1])$. Then by Lemma \ref{lemma: existence of conjugation and invariant arc}, $J$ is a common strictly invariant arc for $\{f_i\}_{i \in I}$.
\end{proof}

\begin{proposition} \label{prop: heteroclinic connection and common strictly invariant arc}
Let $\{A_i\}_{i\in I}$ be a finite family of invertible non-negative $2 \times 2$ matrices. Then, the following are equivalent.
\begin{enumerate}
\item $\{A_i\}_{i\in I}$ has no generalized heteroclinic connections of depth $\infty$.
\item $\{A_i\}_{i\in I}$ has no generalized heteroclinic connections of depth $2$.
\item $\{[F(A_i)]\}_{i\in I}$ admits a common strictly invariant arc.
\end{enumerate}
\end{proposition}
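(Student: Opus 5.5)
We prove the cycle $(1)\Rightarrow(2)\Rightarrow(3)\Rightarrow(1)$. The implication $(1)\Rightarrow(2)$ is immediate: $\pazocal{F}_2\subset\pazocal{F}_\infty$, $\mathrm{Attr}_2\subset\mathrm{Attr}_\infty$ and $\mathrm{Rep}_2\subset\mathrm{Rep}_\infty$, so a depth-$2$ connection is also a depth-$\infty$ one.

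For $(3)\Rightarrow(1)$, let $J$ be a common strictly invariant arc. Every $f\in\pazocal{F}_\infty$ satisfies $f(J)\subset\mathring J$. Conjugate by $\phi$ with $\phi(J)=[-1,1]$ as in Lemma \ref{lemma: existence of conjugation and invariant arc}. Then each conjugated word maps $[-1,1]$ into some $[-r,r]$, so by Lemma \ref{lemma: small image implies contraction} it is a strict $d_{\mathrm{hyp}}$-contraction of $(-1,1)$. Consequently it has exactly one fixed point in $[-1,1]$, and this fixed point is attracting with $|f'|<1$. Since $f$ maps the complementary arc $\widehat{\mathbb R}\setminus\mathring J$ onto a superset of itself, its other fixed point lies outside $J$ and is strictly repelling. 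In particular no element is parabolic or an involution; an element of the form $f^2=\mathrm{id}$ is impossible because $f(J)\subsetneq J$. Hence $\mathrm{Attr}_\infty\subset\mathring J$ and $\mathrm{Rep}_\infty\subset\widehat{\mathbb R}\setminus J$. Every $h\in\{\mathrm{id}\}\cup\pazocal F_\infty$ maps $J$ into $J$, so $h(\mathrm{Attr}_\infty)\cap\mathrm{Rep}_\infty=\varnothing$, and there is no connection.

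The main work is $(2)\Rightarrow(3)$. The absence of depth-$2$ connections gives the following facts.
\begin{enumerate}
\item Each $f_i$ and each $f_i\circ f_j$ is hyperbolic. An identity, parabolic or involution element would put a neutral point in $\mathrm{Attr}_2\cap\mathrm{Rep}_2$, giving a connection with $h=\mathrm{id}$. Ellipticity is excluded by non-negativity: $A_i$ preserves the positive cone, so $f_i$ preserves $[-1,1]$ and has a fixed point there by Brouwer.
\item The attracting fixed points $\alpha_i\in[-1,1]$ and the repelling fixed points $\beta_i$ satisfy $f_j(\alpha_i)\ne\beta_k$ and $\alpha_i\neq\beta_k$, and similarly for the length-two words.
\end{enumerate}
I would take $J$ to be the smallest closed arc in $[-1,1]$ containing the set $\{\alpha_i\}\cup\{f_j(\alpha_i)\}$, slightly enlarged. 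This set is the natural candidate for a forward-invariant hull of the attractors. All $f_i$ map $[-1,1]$ into itself, and all repellers must lie outside the attractor hull; the depth-$2$ condition, applied to the products $f_if_j$ and their fixed points, is what should rule out a repeller $\beta_k$ sitting between two attracting points.

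Next, fix endpoints $a<b$ of the hull and apply the pushing lemma (Lemma \ref{lemma: Mobius pushing lemma}). Because no repeller or pole lies in the relevant region, each $f_i$ moves points near the left end of $J$ strictly rightward, toward $\alpha_i\ge a$, and points near the right end strictly leftward. Combined with the monotonicity of $f_i$ on $[-1,1]$, which may be decreasing, this gives $f_i(J)\subset\mathring J$ after enlarging $J$ by a small $\varepsilon$.

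The main obstacle is the orientation-reversing case. If $f_i$ is decreasing, it swaps the ends of $J$, and we must control $f_i(a)$ relative to $b$. I expect this is exactly where the words $f_if_j$, whose fixed points are $f_i$-images of those of $f_jf_i$, and the image points $f_j(\alpha_i)$ enter. It is also where the pushing lemma must be applied to $f_i\circ f_j$ rather than to $f_i$ alone, and I would handle the case of endpoints lying at $\pm1$ separately. Finally, Lemma \ref{lemma: existence of invariant arc and coordinate change} converts $(3)$ into the conjugacy statement as needed.
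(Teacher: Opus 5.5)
\emph{Verdict: there is a genuine gap in $(2)\Rightarrow(3)$: the proposed arc is not invariant in general, and the key step is not argued.}

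Your $(1)\Rightarrow(2)$ and $(3)\Rightarrow(1)$ are fine and essentially the paper's. So is your argument that every word of length at most $2$ is hyperbolic.

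The orientation-reversing case, which is the whole difficulty, is only announced ("I expect this is where \dots"), never argued. Here is a concrete failure of your candidate arc. Take $A_1=\begin{pmatrix}1&3\\3&1\end{pmatrix}$ and $A_2=\begin{pmatrix}1&2\\1&0\end{pmatrix}$, so that $f_1(x)=-x/2$ and $f_2(x)=(1-x)/2$.
\begin{itemize}
\item Every word is an affine contraction with repeller $\widehat{\infty}$, so there is no generalized heteroclinic connection of any depth.
\item The attractors are $0$ and $1/3$, and the hull of $\{\alpha_i\}\cup\{f_j(\alpha_i)\}$ is $[-1/6,1/2]$. Already $f_1(1/2)=-1/4$ lies outside it.
\item More decisively, $f_1\circ f_2(x)=(x-1)/4$ has its attracting fixed point at $-1/3$. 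By your own $(3)\Rightarrow(1)$ argument, a common strictly invariant arc must contain the attractor of every word in its interior. So no slight enlargement of your hull can work.
\end{itemize}
Attractors of generators and their one-step images do not control where composite words attract. In this example the pushing lemma for $f_1\circ f_2$ moves your left endpoint $-1/6$ to the left, not the right.

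The missing idea is the paper's construction, in three steps.
\begin{itemize}
\item Use the extremes $\alpha_1=\min\mathrm{Attr}_2$ and $\alpha_2=\max\mathrm{Attr}_2$ over \emph{all} words of length $\le2$. Let $\beta_1,\beta_2$ be the nearest repellers or poles of these words to the left and right of $[-1,1]$, capped at $\mp2$. Lemma \ref{lemma: Mobius pushing lemma} then gives a \emph{uniform} pushing statement, valid for either orientation: every word of length $\le2$ moves points of $(\beta_1,\alpha_1)$ to the right and points of $(\alpha_2,\beta_2)$ to the left.
\item Choose the endpoints asymmetrically. Take $A$ slightly below $\min\{\alpha_1,\min_j f_j(\alpha_2)\}$ with $f_i(A)<\beta_2$ for all $i$; this uses that no word of length $\le2$ maps $\mathrm{Attr}_2$ onto a repeller at $\pm1$. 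Then set $B'=\max\{\alpha_2,\max_j f_j(A)\}$, so the right endpoint is defined from the images of the left one. One gets $f_i(B')>A$: either $B'=\alpha_2$ and $f_i(\alpha_2)\ge\min_j f_j(\alpha_2)>A$, or $B'=f_j(A)$ and $f_i(f_j(A))>A$ by uniform pushing for the length-two word $f_i\circ f_j$ at $A$. This is exactly where depth $2$ enters. Then pick $B\in(B',\beta_2)$ close enough that $f_i(B)>A$ still holds.
\item Close the argument. Pushing gives $A<f_i(A)\le B'<B$ and $f_i(B)<B$. Since no poles lie in $(\beta_1,\beta_2)$, monotonicity yields $f_i([A,B])\subset(A,B)$.
\end{itemize}
Note also that $[A,B]$ may have to extend beyond $[-1,1]$ when some attractor equals $\pm1$. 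So monotonicity of $f_i$ on $[-1,1]$ alone does not suffice; the pole-free window $(\beta_1,\beta_2)$ is what makes the last step work.
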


\begin{proof}
$\big[$ (3) $\Rightarrow$ (1) $\big]$ Let $f_i = [F(A_i)]$ for each $i \in I$, and assume $J$ is a common strictly invariant arc for $\{f_i\}_{i\in I}$. Take $\phi \in \mathrm{PGL}_2(\mathbb{R})$ such that $\phi(J) = [-1,1]$ and let
\[
g_f = \phi \circ f \circ \phi^{-1}
\]
for $f \in \pazocal{F}_\infty$. Then we have $g_f([-1,1]) \subset (-1,1)$ for every $f \in \pazocal{F}_1$ by Lemma \ref{lemma: existence of conjugation and invariant arc}, and thus $g_f([-1,1]) \subset (-1,1)$ for every $f \in \pazocal{F}_\infty$. By Lemma \ref{lemma: small image implies contraction}, $g_f$ is a strict contraction with respect to the hyperbolic metric. Also, $((-1,1), d_{\mathrm{hyp}})$ is complete. Then, by Banach's fixed point theorem, there is a unique attracting fixed point $\alpha \in (-1,1)$ of $g_f$, satisfying $|g_f'(\alpha)| < 1$. Hence $g_f$ is hyperbolic for every $f \in \pazocal{F}_\infty$.

Since conjugacy preserves asymptotic behavior around fixed points, every $f \in \pazocal{F}_\infty$ is hyperbolic, and we conclude that $\mathrm{Attr}_\infty \subset \mathring{J}$, and $\mathrm{Rep}_\infty \subset \widehat{\mathbb{R}} \setminus \mathring{J}$ $\big($by the uniqueness of fixed point on $\mathring{J}$$\big)$. In particular, for any $f \in \{ \mathrm{id} \} \cup \pazocal{F}_\infty$, we have $f(\mathrm{Attr}_\infty) \cap \mathrm{Rep}_\infty \subset \mathring{J} \cap \big( \widehat{\mathbb{R}} \setminus \mathring{J} \big) = \varnothing$, proving the absence of generalized heteroclinic connections of depth $\infty$.

$\big[$ (1) $\Rightarrow$ (2) $\big]$ This is immediate from the definition, since depth $2$ implies depth $\infty$.

$\big[$ (2) $\Rightarrow$ (3) $\big]$ First, take a non-negative invertible matrix $A = \begin{pmatrix} p & q \\ r & s \end{pmatrix}$ and consider the fixed point equation $[F(A)](x) = x$;
\[
(p-q+r-s)x^2 + 2(q+r)x-(p+q-r-s) = 0.
\]
Then, the discriminant of the above equation is $(p-s)^2 + 4qr \geq 0$ by $p,q,r,s \geq 0$. Hence every $f \in \pazocal{F}_2$ has a real fixed point, so no element of $\pazocal{F}_2$ is elliptic. (Note that we used $F(AB) = F(A) F(B)$ to ensure that any $f \in \pazocal{F}_2$ satisfies $f = [F(M)]$ for some non-negative matrix $M$.) If some $f \in \pazocal{F}_2$ is the identity, parabolic or an involution, we have $|f'(p)| = 1$ for some fixed point $p$. This implies $\mathrm{Attr}_2 \cap \mathrm{Rep}_2 \ne \varnothing$, contradicting the absence of generalized heteroclinic connections. Therefore, every $f \in \pazocal{F}_2$ is hyperbolic.

\begin{claim} \label{claim: attractors and repellers location}
We have $\mathrm{Attr}_2 \subset [-1,1]$ and $\mathrm{Rep}_2 \cap (-1,1) = \varnothing$.
\end{claim}
\begin{proof}
Since $f_i([-1,1]) \subset [-1,1]$ for each $i \in I$, we have $f([-1,1]) \subset [-1,1]$ for every $f \in \pazocal{F}_2$. If $f(\alpha) = \alpha \in \widehat{\mathbb{R}} \setminus [-1,1]$ and $|f'(\alpha)| < 1$ for some $\alpha \in \widehat{\mathbb{R}}$ and $f \in \pazocal{F}_2$, we have $f^n(x) \to \alpha \in \widehat{\mathbb{R}} \setminus [-1,1]$ as $n \to \infty$ for every $x \in (-1,1)$ that is not a fixed point. This contradicts $f^n([-1,1]) \subset [-1,1]$ for every $n \geq 1$. Thus, $\mathrm{Attr}_2 \subset [-1,1]$.
If $f(\beta) = \beta \in (-1,1)$ and $|f'(\beta)| > 1$ for some $f \in \pazocal{F}_2$, we have $f^{-n}(\widehat{\infty}) \to \beta \in (-1,1)$ as $n \to \infty$. Then, for large enough $n$, we have $f^{-n}(\widehat{\infty}) \in (-1,1)$, implying $f^n(x) = \widehat{\infty}$ for some $x \in (-1,1)$, which is a contradiction.
\end{proof}

Now, define $\mathrm{Pole}_2$ as the set of poles of M\"obius maps in $\pazocal{F}_2$.
\[
\mathrm{Pole}_2 = \left\{ p \in \widehat{\mathbb{R}} \, \setcond \, \text{$f(p) = \widehat{\infty}$ for some $f \in \pazocal{F}_2$.} \right\}.
\]
Note that $\mathrm{Pole}_2 \cap [-1,1] = \varnothing$. Also, $\mathrm{Attr}_2$ and $\mathrm{Rep}_2$ are finite sets. Let
\[
\beta_1 = \max \bigg\{ -2, \,\sup\big( \mathrm{Pole}_2 \cap (-\infty, -1) \big), \, \sup \big( \mathrm{Rep}_2 \cap (-\infty, -1] \big) \bigg\}\in [-2, -1],
\]
\[
\alpha_1 = \min \big( \mathrm{Attr}_2 \big) \in [-1,1], \qquad \alpha_2 = \max \big( \mathrm{Attr}_2 \big) \in [-1,1].
\]
\[
\beta_2 = \min \bigg\{ 2, \, \inf \big( \mathrm{Pole}_2 \cap (1, \infty) \big), \, \inf \big( \mathrm{Rep}_2 \cap [1, \infty) \big) \bigg\} \in [1, 2],
\]
where, by convention we define $\sup \varnothing = - \infty$ and $\inf \varnothing = + \infty$. Then, since $\mathrm{Attr}_2 \cap \mathrm{Rep}_2 = \varnothing$, we have $\beta_1 < \alpha_1 \leq \alpha_2 < \beta_2$.

\begin{claim} \label{claim: uniform pushing on core interval}
For every $f \in \pazocal{F}_2$, the following hold:
\[
\text{(1) If $x \in (\beta_1, \alpha_1)$, then $x < f(x)$.} \qquad
\text{(2) If $x \in (\alpha_2, \beta_2)$, then $f(x) < x$.}
\]
\end{claim}

\begin{proof}
Fix $f \in \pazocal{F}_2$. By the previous argument, $f$ is hyperbolic. Let $\alpha$ be its attracting fixed point, $\beta$ its repelling fixed point, and $p=f^{-1}(\widehat\infty)$ its pole. By Claim \ref{claim: attractors and repellers location}, we have $\alpha \in [-1,1]$ and $\beta \notin (-1,1)$. Also, $p\notin[-1,1]$ since $f([-1,1])\subset[-1,1]$.

\smallskip

Let $x\in(\beta_1,\alpha_1)$. Since $\alpha_1\le \alpha$, we have $x<\alpha$. By the definition of the arc $L(\beta, \alpha)$ and $\beta_1$, we have $x \in L(\beta, \alpha)$ regardless of how $\alpha$ and $\beta$ are aligned. Similarly $x \in L(p, \alpha)$, and we conclude
\[
x \in L(\beta,\alpha)\cap L(p,\alpha)\cap(-\infty,\alpha),
\]
so Lemma \ref{lemma: Mobius pushing lemma} gives $x<f(x)$. The proof for (2) is identical.
\end{proof}

We claim that
\begin{equation} \label{eq: boundary behavior in the proof of heteroclinic connection and common strictly invariant arc}
\beta_1 < f(\alpha_j) < \beta_2 \hspace{7pt} \text{for $j = 1,2$ and $f \in \pazocal{F}_2$.}
\end{equation}
Indeed, take any $f \in \pazocal{F}_2$. Then we have $\beta_1 \leq -1 \leq f(\alpha_j)$ for $j = 1,2$. If $\beta_1 = f(\alpha_j) $, we have $\beta_1 = -1$, implying $\beta_1 \in \mathrm{Rep}_2$ and existence of generalized heteroclinic connections of depth $2$. Thus, $\beta_1 < f(\alpha_j)$. Similarly, $f(\alpha_j) < \beta_2$.

Let $A' = \min \big\{ \alpha_1, \min_{j \in I} f_j(\alpha_2) \big\} (> \beta_1)$. If $A' = \alpha_1$, we have $f_i(A') < \beta_2$ for all $i \in I$ by equation \eqref{eq: boundary behavior in the proof of heteroclinic connection and common strictly invariant arc}. If $A' = f_j(\alpha_2)$ with some $j \in I$, then for any $i \in I$, we have $f_i(A') = f_i \circ f_j (\alpha_2) < \beta_2$, again by equation \eqref{eq: boundary behavior in the proof of heteroclinic connection and common strictly invariant arc}.

Thus $f_i(A') < \beta_2$ for all $i \in I$. By the continuity of $f_i$ for all $i \in I$, there is $A \in (\beta_1, A')$ such that $f_i(A) < \beta_2$ for all $i \in I$.

Next, let $B' = \max \big\{ \alpha_2, \max_{j \in I} f_j(A) \big\} (< \beta_2)$. Then we claim that
\[
A < f_i(B') \hspace{7pt} \text{for all $i \in I$.}
\]
Indeed, if $B' = \alpha_2$, then for any $i \in I$ we have $f_i(B') = f_i(\alpha_2) \geq A' > A$. Suppose $B' = f_j(A)$ with some $j \in I$. If furthermore $f_i(B') \leq A$ for some $i \in I$, we have $f_i \circ f_j(A) \leq A$. However, since $\beta_1 < A < \alpha_1$, this contradicts the ``pushing'' Claim \ref{claim: uniform pushing on core interval}. Hence $A < f_i(B')$ for all $i \in I$.

Thus, by the continuity of $f_i$ for all $i \in I$, there is $B \in (B', \beta_2)$ such that $A < f_i(B)$ for every $i \in I$. Since $\beta_1 < A < \alpha_1$ we have $A < f_i(A)$ for all $i \in I$ by Claim \ref{claim: uniform pushing on core interval}, and by construction $f_i(A) \leq B' < B$ for every $i \in I$. Also, since $\alpha_2 < B < \beta_2$, we have $f_i(B) < B$ for all $i \in I$ by Claim \ref{claim: uniform pushing on core interval}.

We conclude that $f_i(A), f_i(B) \in (A, B)$ for all $i \in I$. By the monotonicity of $f_i$ on $[A,B]$ and the fact that their poles do not exist within $(\beta_1, \beta_2) \supset [A,B]$, we have $f_i([A,B]) \subset (A,B)$. Therefore, $[A,B]$ is a common strictly invariant arc.
\end{proof}

\begin{proof}[Proof of Theorem \ref{thm: main theorem 3}]
The equivalence of the stated conditions follows from Lemma \ref{lemma: existence of invariant arc and coordinate change} and Proposition \ref{prop: heteroclinic connection and common strictly invariant arc}.
\end{proof}

\begin{example} \label{example: matrices with generalized heteroclinic connections}
Let us see some examples with generalized heteroclinic connections of depth $2$.
\begin{enumerate}
\item Let $I = \{1\}$ with weight $w_1 = 1$. Consider $A_1 = \begin{pmatrix} 2 & 1 \\ 0 & 2 \end{pmatrix}$. Then $[F(A_1)](x) = \frac{3x+1}{-x+5}$, which has $1$ as a double fixed point; it is parabolic, and $\{A_1\}$ has a generalized heteroclinic connection of depth $2$. Taking $\phi(x) = \frac{1}{1-x}$, we have $g(y) := \phi \circ [F(A_1)] \circ \phi^{-1}(y) = y + \frac{1}{4}$. Thus, by the partial sum formula in equation \eqref{eq: finite sum of T},
\[
\sum_{k = 0}^{n-1} \big( T^k \boldsymbol{v} \big)_0 =  \log \left( \frac{5 - \frac{n-1}{n+3}}{2} \right) = \log 2 + \log \left( \frac{n+4}{n+3} \right) \xrightarrow[]{n \to \infty} \log 2.
\]
By the identity $A_1^n = 2^{n-1} \begin{pmatrix} 2 & n \\ 0 & 2 \end{pmatrix}$ for $n \geq 1$, we conclude that $\lambda = \sum_{k = 0}^\infty \big( T^k \boldsymbol{v} \big)_0$. $\big($The convergence is slow compared to the case for positive matrices.$\big)$
\item Let $I = \{1\}$, $w_1 = 1$, and $A_1 = \begin{pmatrix} 0 & 1+u \\ 1-u & 0 \end{pmatrix}$ with $u \in (0,1)$. Let $f_1 = [F(A_1)]$. Then $f_1$ is an involution. Also $f_1(x) = \frac{-x+u}{-ux+1}$, and we see that $f_1^n(0) = 0$ for even $n$ and $f_1^n(0) = u$ for odd $n$. Thus, the partial sum
\[
\sum_{k = 0}^{n-1} \big( T^k \boldsymbol{v} \big)_0 = \log \big( 1 - u f_1^{n-1}(0) \big)
\]
exhibits period-$2$ oscillation and does not converge. The Lyapunov exponent in this case is $\frac{1}{2} \log ( 1-u^2)$, which is the arithmetic mean of the orbit of $\sum_{k = 0}^{n-1} \big( T^k \boldsymbol{v} \big)_0$.
\end{enumerate}
\end{example}

\subsection{Proof of Applications}

\begin{proof}[Proof of Proposition \ref{prop: toothless twins}]

We use the following lemma.
\begin{lemma}[{\cite[Lemma 3.7]{Alibabaei26}}]
Let $\mu$ be Lebesgue measure. Let $\tau, u \in \{0, 1, \ldots, b-1\}$, and
\begin{align*}
& D_1 = \{0, 1, \ldots, b-1\} \setminus \{ \tau \}, \\
& D_2 = \{0, 1, \ldots, b-1\} \setminus \{u\}.
\end{align*}
Then, the pair $(D_1, D_2)$ is non-degenerate if and only if both $\{\tau, u\} \cap \{0, b-1\} = \varnothing$ and $\tau + u = b-1$.
\end{lemma}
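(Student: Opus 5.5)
The plan is to compute the four entries of each matrix $A_i$ explicitly. Then I determine exactly when some $A_i$ is singular. From the definition in Theorem~\ref{theorem: KP main result},
\[
A_i(j,k)=\#\big((D_1+i+j)\cap(D_2+kb)\big),\qquad 0\le j,k\le 1.
\]
Since $D_1$ and $D_2$ are full digit sets with one digit removed, $D_1+i+j$ is an integer interval of length $b$ with one hole at $\tau+i+j$. Likewise $D_2+kb$ is the window $[kb,kb+b-1]$ with one hole at $u+kb$.

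Write $s=i+j\in\{0,\dots,b\}$. The full interval $[s,s+b-1]$ meets $[0,b-1]$ in $b-s$ points and meets $[b,2b-1]$ in $s$ points. Hence
\[
A_i(j,0)=b-s-\epsilon_0(s),\qquad A_i(j,1)=s-\epsilon_1(s),
\]
where $\epsilon_k(s)\in\{0,1,2\}$ records the removed points. Concretely, $\epsilon_k(s)$ is the indicator that $\tau+s$ lies in window $k$, plus the indicator that $u+kb$ lies in $[s,s+b-1]$, minus a correction when the two removed points coincide, i.e.\ when $\tau+s=u+kb$.

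With these formulas in hand, I compute
\[
\det A_i=A_i(0,0)A_i(1,1)-A_i(0,1)A_i(1,0)
\]
as a function of $i$, $\tau$ and $u$. I expect each correction term to be a small piecewise-constant quantity, with breakpoints at $i=b-1-\tau$, $i=b-\tau$, $i=u$ and $i=u+1$. In the generic range the unperturbed determinant is $(b-i)(i+1-1)\!-\!\dots$, which comes out near $b$ times a linear quantity. Thus $\det A_i=0$ forces a specific coincidence among these breakpoints.

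Next I handle the boundary cases $\tau\in\{0,b-1\}$ or $u\in\{0,b-1\}$. There I check directly that $A_0$ or $A_{b-1}$ has a zero row or column combined with a proportional pair, so it is singular.

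In the interior case, the aim is to show that the shifted holes make some $A_i$ rank one unless the hole $\tau+i+j$ in the second window aligns with $u+b$ at exactly the index where the first window loses its hole. This alignment is the condition $\tau+u=b-1$.

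For the converse, I take $\tau+u=b-1$ with $1\le\tau,u\le b-2$. I then verify that every $\det A_i\neq 0$ by checking the finitely many regimes of $i$ relative to $u$. Example~\ref{example: middle fifth}, with $b=5$ and $\tau=u=2$, serves as a sanity check on the formulas.

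The main obstacle is the bookkeeping of the correction terms $\epsilon_k(s)$ across all regimes of $i$, together with the coincidence cases where both holes fall on the same point. I would organize this by tabulating the entries of $A_i$ for the five ranges
\[
i<\min(u,b-1-\tau),\ \ldots,\ i>\max(u+1,b-\tau),
\]
and reading off the determinant in each range.
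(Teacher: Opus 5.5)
Your overall strategy (tabulate the $A_i$ and locate the singular ones) is the natural one. The paper only cites this lemma from \cite{Alibabaei26}, but its table of the $A_i$ in the proof of Proposition~\ref{prop: toothless twins} is exactly such a computation. Your boundary step is also correct: $\tau=0$ or $u=b-1$ makes the first column of $A_{b-1}$ vanish, and $\tau=b-1$ or $u=0$ makes the second column of $A_0$ vanish.

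The gap is in the interior case, where the mechanism you describe is wrong. Without holes the determinant is exactly $(b-i)(i+1)-i(b-i-1)=b$, not ``$b$ times a linear quantity''. What actually drives the proof is a row-sum identity. The two entries of row $j$ of $A_i$ count $D_1+s$ (with $s=i+j$) against $D_2$ modulo $b$, so they sum to $S(s)=b-2+[\tau+s\equiv u \pmod b]$, where $[P]$ is the indicator of $P$. Write $p(s)=\#\big((D_1+s)\cap D_2\big)=b-s-[s\le b-1-\tau]-[s\le u]+[\tau+s=u]$. Then
\[
\det A_i=p(i)\,S(i+1)-p(i+1)\,S(i),
\]
and, whenever both rows sum to $b-2$,
\[
\det A_i=(b-2)\big(1-[i=b-1-\tau]-[i=u]\big).
\]
So singularity is caused by the two breakpoints $u$ and $b-1-\tau$ being \emph{distinct}; in that case $A_u$ and $A_{b-1-\tau}$ are singular. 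The condition $\tau+u=b-1$ is their coincidence: the $D_1$-hole $\tau+s$ enters the second window at the same step $s=b-\tau=u+1$ at which the $D_2$-hole $u$ leaves $[s,b-1]$. Your sentence ``$\det A_i=0$ forces a specific coincidence among these breakpoints'' has the logic backwards. Your phrase ``the hole $\tau+i+j$ in the second window aligns with $u+b$'' describes a different event, namely the collision $\tau+s\in\{u,u+b\}$ of the two holes.

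That collision is exactly what your five ranges leave out. It happens at the unique $s^*\in\{0,\dots,b\}$ with $s^*\equiv u-\tau \pmod b$ (at both $0$ and $b$ if $u=\tau$). In general this $s^*$ lies strictly inside one of your ranges. For example, $s^*=2b-1-2\tau$ when $u=b-1-\tau<\tau$, which produces the special rows $i=2b-2\tau-2$ and $i=2b-2\tau-1$ of the paper's table. At these indices the row sum is $b-1$, your range formulas give the wrong entries, and the converse needs $A_{s^*-1}$ and $A_{s^*}$ checked separately. One finds:
\begin{enumerate}
\item for $u>\tau$: $\det A_{s^*-1}=b-1-(u-\tau)$ and $\det A_{s^*}=(b-1)\big([u\le b-2]+[\tau\ge 1]\big)-\big(b-1-(u-\tau)\big)$;
\item for $u<\tau$: $\det A_{s^*-1}=(\tau-u)+(b-1)\big(1-[u=0]-[\tau=b-1]\big)$ and $\det A_{s^*}=b-1-(\tau-u)$;
\item for $u=\tau$: $\det A_0=(b-1)\big([\tau\le b-2]+[\tau\ge 1]-1\big)$ and $\det A_{b-1}=(b-1)\big(1-[\tau=0]-[\tau=b-1]\big)$.
\end{enumerate}
All of these are nonzero when $\tau,u\notin\{0,b-1\}$.

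The same bookkeeping matters in the forward direction. The generic formula may be used at $i=u$ only if $\tau\notin\{0,b-1\}$, and at $i=b-1-\tau$ only if $u\notin\{0,b-1\}$. This is why the boundary cases must be dispatched first. With the row-sum identity and the collision indices added, your plan becomes a complete proof. As written, tabulating by the five ranges gives wrong entries at $s^*-1$ and $s^*$ and leaves the converse unproved.
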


Thus, assume $u = b - 1 - \tau$ and $1 \leq \tau \leq b-2$. Suppose $\frac{b-1}{2} < \tau \leq b-2$. (The proof $1 \leq \tau \leq \frac{b-1}{2}$ is proved similarly.) Then,
\begin{equation*}
\# \big( \left( D_1 + s \right) \cap D_2 \big) = \left\{
\begin{array}{ll}
b-s-2
& \text{\quad for \, $0 \le s < b-\tau$,} \\
b-s
& \text{\quad for \, $b-\tau \le s \le b$.}
\end{array}
\right.
\end{equation*}

\begin{equation*}
\# \big( \left( D_1 + s \right) \cap \left(D_2 + b \right) \big) = \left\{
\begin{array}{ll}
s
& \text{\quad for \, $0 \le s < b-\tau$,} \\
s-2
& \text{\quad for \, $b-\tau \le s < 2b-2\tau-1$,} \\
s-1
& \text{\quad for \, $s = 2b-2\tau-1$,} \\
s-2
& \text{\quad for \, $2b-2\tau-1 < s \le b$.}
\end{array}
\right.
\end{equation*}
Therefore, the matrices $A_i$ are calculated as follows.
\begin{equation*}
A_i =
\begin{cases}
\vcenter{\hbox{$\begin{pmatrix}
\colA{b-i-2} & \colB{i}\\
\colA{b-i-3} & \colB{i+1}
\end{pmatrix}$}}
& \text{for \,} 0 \le i < b-\tau-1,\\[16pt]

\vcenter{\hbox{$\begin{pmatrix}
\colA{\tau-1} & \colB{b-\tau-1}\\
\colA{\tau} & \colB{b-\tau-2}
\end{pmatrix}$}}
& \text{for \,} i = b-\tau-1,\\[16pt]

\vcenter{\hbox{$\begin{pmatrix}
\colA{b-i} & \colB{i-2}\\
\colA{b-i-1} & \colB{i-1}
\end{pmatrix}$}}
& \text{for \,} b-\tau \le i < 2b-2\tau-2,\\[16pt]

\vcenter{\hbox{$\begin{pmatrix}
\colA{2\tau-b+2} & \colB{2b-2\tau-4}\\
\colA{2\tau-b+1} & \colB{2b-2\tau-2}
\end{pmatrix}$}}
& \text{for \,} i = 2b-2\tau-2,\\[16pt]

\vcenter{\hbox{$\begin{pmatrix}
\colA{2\tau-b+1} & \colB{2b-2\tau-2}\\
\colA{2\tau-b} & \colB{2b-2\tau-2}
\end{pmatrix}$}}
& \text{for \,} i = 2b-2\tau-1,\\[16pt]

\vcenter{\hbox{$\begin{pmatrix}
\colA{b-i} & \colB{i-2}\\
\colA{b-i-1} & \colB{i-1}
\end{pmatrix}$}}
& \text{for \,} 2b-2\tau-1 < i \le b-1.
\end{cases}
\end{equation*}

Now, the proof is easier if we take the transpose. $\big($By equivalence of (1) and (3) in Theorem \ref{thm: main theorem 3}, the existence of generalized heteroclinic connection of depth $2$ is equivalent to its existence for the transposed family.$\big)$ Let $G_i = F(A_i^{\top})$. Then, using the definition of $F$ in equation \eqref{eq: definition of F},
\begin{equation*}
G_i =
\begin{cases}
\vcenter{\hbox{$\begin{pmatrix}
\colA{2} & \colB{2b-4i-6 \hspace{4pt}}\\
\colA{0} & \colB{2b-4 \hspace{4pt}}
\end{pmatrix}$}}
& \text{for \,} 0 \le i < b-\tau-1,\\[16pt]

\vcenter{\hbox{$\begin{pmatrix}
\colA{-2} & \colB{-2b+4\tau+2 \hspace{4pt}}\\
\colA{0} & \colB{2b-4 \hspace{4pt}}
\end{pmatrix}$}}
& \text{for \,} i = b-\tau-1,\\[16pt]

\vcenter{\hbox{$\begin{pmatrix}
\colA{2} & \colB{2b-4i+2 \hspace{4pt}}\\
\colA{0} & \colB{2b-4 \hspace{4pt}}
\end{pmatrix}$}}
& \text{for \,} b-\tau \le i < 2b-2\tau-2,\\[16pt]

\vcenter{\hbox{$\begin{pmatrix}
\colA{3} & \colB{-6b+8\tau+9 \hspace{4pt}}\\
\colA{-1} & \colB{2b-3 \hspace{4pt}}
\end{pmatrix}$}}
& \text{for \,} i = 2b-2\tau-2,\\[16pt]

\vcenter{\hbox{$\begin{pmatrix}
\colA{1} & \colB{-6b+8\tau+5 \hspace{4pt}}\\
\colA{1} & \colB{2b-3 \hspace{4pt}}
\end{pmatrix}$}}
& \text{for \,} i = 2b-2\tau-1,\\[16pt]

\vcenter{\hbox{$\begin{pmatrix}
\colA{2} & \colB{2b-4i+2 \hspace{4pt}}\\
\colA{0} & \colB{2b-4 \hspace{4pt}}
\end{pmatrix}$}}
& \text{for \,} 2b-2\tau-1 < i \le b-1.
\end{cases}
\end{equation*}
Let $g_i = [G_i]$. For affine cases (the upper triangular cases) we always have $|g_i'(x)| = \frac{1}{b-2}$ for every $x \in [-1,1]$. When $i = 2b-2\tau-2$, by $\tau \leq b-2$ we have
\[
| g_i'(x) | = \left| \frac{8 \tau }{ {(-x + 2b-3 )}^2 } \right| \leq \frac{ 8 \tau }{ {(2b-4)}^2 } = \frac{ 2 \tau }{ {(b-2)}^2 } \leq \frac{ 2 (b-2) }{ {(b-2)}^2 } = \frac{2}{b-2}.
\]
Similarly, when $i = 2b-2\tau-1$, using $\frac{b-1}{2} < \tau$ and $b \geq 5$,
\[
| g_i'(x) | = \left| \frac{8 (b-\tau-1)}{ {(x + 2b-3 )}^2 } \right| \leq \frac{ 8 (b-\tau-1) }{ {(2b-4)}^2 } = \frac{ 2 (b-\tau-1) }{ {(b-2)}^2 } \leq \frac{ b - 1 }{ {(b-2)}^2 } \leq \frac{2}{b-2}.
\]

Thus, for every $i \in I$, we have $\max_{x \in [-1,1]} | g_i'(x) | \leq \frac{2}{b-2} \leq \frac{2}{3}$. Then, by the chain-rule the Lipschitz constant of $g_i \circ g_j$ for every $0 \leq i,j \leq b-1$ is bounded above by $\frac{2}{3}$. This implies that every composition of at most length $2$ of $\{g_i\}_{i \in I}$ is a strict contraction on $[-1,1]$ with Euclidean metric, and by Banach's fixed point theorem, there is a unique fixed point on $[-1,1]$ which is attracting. Hence $\mathrm{Attr}_2 \subset [-1,1]$ and $\mathrm{Rep}_2 \cap [-1,1] = \varnothing$ (by the uniqueness of fixed point on $[-1,1]$). Therefore, there can be no generalized heteroclinic connections of depth $2$.
\end{proof}

\begin{proof}[Proof of Proposition \ref{prop: thick Cantor sets are Kernel expandable}]
Suppose $(D_1, D_2)$ is non-degenerate. Then the matrices $\{A_i\}_{i \in I}$ are invertible. Let $\rho_1 = b - \#D_1$, $\rho_2 = b - \#D_2$, $m = \max\{ \rho_1, \rho_2 \} (\leq \rho(b))$, and $R = \rho_1 + \rho_2$. Fix $0 \leq i \leq b-1$ and let $A_i = \begin{pmatrix} p & q \\ r & s \end{pmatrix}$, where we recall
\begin{align*}
&p = \# \big( \left( D_1+i \right) \cap D_2 \big), \quad
q = \# \big( \left( D_1+i \right) \cap \left(D_2 + b \right) \big), \\
&r = \# \big( \left( D_1+i+1 \right) \cap D_2 \big), \quad
s = \# \big( \left( D_1 +i+1 \right) \cap \left(D_2 + b \right) \big).
\end{align*}

We first consider the combinations of $p,q,r,s$ that allow sharp estimates.
\begin{claim} \label{claim: bound on p+q etc}
We have
\begin{align*}
& b-R \leq p+q \leq b-m, \quad
b-R \leq r+s \leq b-m, \\
& b-R-1 \leq p+s \leq b-m+1, \quad
b-R-1 \leq q+r \leq b-m.
\end{align*}
\end{claim}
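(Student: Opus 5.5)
The plan is to turn each quantity into a count of elements of $D_1+i$ or $D_1+i+1$ lying in a window of length $b$, and then control how many elements of $D_1$ and $D_2$ are missing from that window.

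\textbf{Window counts.} Both $D_2$ and $D_2+b$ lie inside $[0,2b-1]$, and they are disjoint. Hence
\[
p+q=\#\big((D_1+i)\cap(D_2\cup(D_2+b))\big).
\]
Since $0\le i\le b-1$, the set $D_1+i$ lies inside $[i,i+b-1]\subset[0,2b-1]$. Inside this window, $D_2\cup(D_2+b)$ misses exactly $\rho_2$ residues mod $b$. The window consists of $b$ consecutive integers, so each residue occurs exactly once. Therefore $D_2\cup(D_2+b)$ contains $b-\rho_2$ points of the window, while $D_1+i$ contains $b-\rho_1$ of them. This gives:
\begin{itemize}
\item $p+q\le \min\{b-\rho_1,b-\rho_2\}=b-m$;
\item by inclusion--exclusion in a set of size $b$, $p+q\ge (b-\rho_1)+(b-\rho_2)-b=b-R$.
\end{itemize}
The same argument with $i+1$ in place of $i$ bounds $r+s$. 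This needs $i+1\le b$, so $D_1+i+1\subset[1,2b-1]$ still lies in $[0,2b-1]$.

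\textbf{Mixed sums.} For $p+s$ and $q+r$, write
\[
q+r=\#\big((D_1+i)\cap(D_2+b)\big)+\#\big((D_1+i+1)\cap D_2\big).
\]
Shift the second set by $-1$, so it becomes $\#\big((D_1+i)\cap(D_2-1)\big)$. The set $D_2-1\subset[-1,b-2]$ and the set $D_2+b\subset[b,2b-1]$ are disjoint, so
\[
q+r=\#\big((D_1+i)\cap E\big),\qquad E=(D_2-1)\cup(D_2+b).
\]
Now compare $E$ with $D_2'\cup(D_2'+b)$ in the relevant window:
\begin{itemize}
\item $E$ lies in $[-1,2b-1]$, and its residue pattern is that of $D_2$ on the part $[b,2b-1]$ and that of $D_2$ shifted by one on $[-1,b-2]$.
\item Intersecting with the window $[i,i+b-1]$, which meets both parts, at most $b$ points are relevant.
\end{itemize}

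\textbf{Counting for $q+r$.} Concretely, split the window at $b$. On $[i,b-1]$, the points of $E$ are $D_2-1$, i.e.\ points $d-1$ with $d\in D_2\cap[i+1,b]$; note $b\notin D_2$. On $[b,i+b-1]$, the points are $D_2+b$ restricted to $D_2\cap[0,i-1]$. So the number of points of $E$ in the window is
\[
\#\big(D_2\cap[i+1,b-1]\big)+\#\big(D_2\cap[0,i-1]\big)=b-\rho_2-[\,i\in D_2\,].
\]
This lies in $[b-\rho_2-1,\,b-\rho_2]$. Intersecting with $D_1+i$, which has $b-\rho_1$ points in the window, gives
\[
q+r\le b-m \quad\text{and}\quad q+r\ge b-R-1 .
\]

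\textbf{Counting for $p+s$.} Symmetrically, $p+s=\#\big((D_1+i)\cap(D_2\cup(D_2+b-1))\big)$. Here the window count is $b-\rho_2+[\,\text{overlap point}\,]$, since the shifted copies can double-count one residue at the seam. This produces the $+1$ in the upper bound $b-m+1$ and the $-1$ in the lower bound.

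The main obstacle is getting the seam bookkeeping at $b$ exactly right for $p+s$. There the two pieces $D_2$ and $D_2+b-1$ can overlap or leave a gap of one point, and this is what produces the asymmetric bounds. I would handle it by explicitly splitting the window at $b-1$ and $b$ and tracking the single boundary element.
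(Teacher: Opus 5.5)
Your bounds for $p+q$, $r+s$ and $q+r$ are correct. Counting inside the window $W=[i,i+b-1]\supset D_1+i$ is a legitimate mirror image of the paper's bookkeeping, which instead folds pieces of $D_1+i$ back into $\{0,\dots,b-1\}$. Your count $\#(E\cap W)=b-\rho_2-[i\in D_2]$ for $q+r$ also checks out, including the cases $i=0$ and $i=b-1$.

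The gap is in $p+s$, the only inequality whose upper bound carries a $+1$. The identity $p+s=\#\big((D_1+i)\cap(D_2\cup(D_2+b-1))\big)$ is false. The sets $D_2$ and $D_2+b-1$ share the point $b-1$ whenever $0,b-1\in D_2$. In that case a point $b-1\in D_1+i$ is counted once in $p$ and once in $s$, but only once in your union.

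In the paper's middle-fifth example ($b=5$, $D_1=D_2=\{0,1,3,4\}$) with $i=0$, one has $p+s=4+1=5=b-m+1$, while your union count is $4$. So the discarded double count is exactly what makes the upper bound sharp.

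The follow-up sentence does not repair this. A window count of the form $b-\rho_2+[\cdot]\ge b-\rho_2$, fed into your inclusion--exclusion, would give $p+s\ge b-R$, which is false. Take $i=2$ in the same example: the distinct points of $D_2\cup(D_2+4)$ in $W=[2,6]$ are $\{3,4,5\}$, only $b-\rho_2-1$ of them, and $p+s=1+1=2=b-R-1$. So the $+1$ and the $-1$ come from different sources, and as written both are asserted rather than derived.

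To close the gap, count with multiplicity. Let $g=\mathbf{1}_{D_2}+\mathbf{1}_{D_2+b-1}$ on $W$, so that $p+s=\sum_{x\in D_1+i}g(x)$. Then $g\le 1$ on $W$ except at $x=b-1$, where $g(b-1)=[b-1\in D_2]+[0\in D_2]\le 2$. Moreover $\sum_{x\in W}g(x)=\#(D_2\cap[i,b-1])+\#(D_2\cap[0,i])=b-\rho_2+[i\in D_2]$.

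This gives $p+s\le\min\{b-\rho_1+1,\,b-\rho_2+1\}=b-m+1$. The first bound holds because only one point can carry weight $2$; the second because $p+s\le\sum_W g$.

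For the lower bound, $\#\{x\in W: g(x)\ge 1\}\ge\sum_W g-1\ge b-\rho_2-1$. Inclusion--exclusion in $W$ then yields $p+s\ge(b-\rho_1)+(b-\rho_2-1)-b=b-R-1$.

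This is the dual of the paper's argument. There, the folded pieces $U_1\subset\{i,\dots,b-1\}$ and $V_1\subset\{0,\dots,i\}$ meet in at most one point, and the domains of $u,v$ overlap in at most one point. In both frames there are two one-point seams, a possible double point and a boundary term, and both must be tracked explicitly.
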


\begin{proof}
Let $E_1 = \{ d + i \hspace{6pt}\mathrm{mod} \hspace{4pt} b \, | \, d \in D_1\} \subset \{0, 1, \ldots, b-1\}$. Then,
\[
p+q = \# \big( \left( D_1+i \right) \cap \left( D_2 \cup \left(D_2 + b \right) \right) \big)
= \# \big( E_1 \cap D_2 \big).
\]
Since $\# E_1 = b - \rho_1$, we have $b-(\rho_1+\rho_2) \leq p+q \leq \min \{b-\rho_1, b - \rho_2\} = b - m$. The inequality for $r+s$ follows by the same proof with $i$ replaced with $i+1$.

Next, let $U_1 = \big( D_1 + i \big) \cap \{0, 1, \ldots, b-1\}$ and $V_1 = \big( D_1+i+1 - b \big) \cap \{0,1, \ldots, b-1 \}$. Then, $p = \# \big( U_1 \cap D_2 \big)$ and $s = \# \big( V_1 \cap D_2 \big)$. Since $U_1 \subset \{i, \ldots, b-1\}$ and $V_1 \subset \{0, \ldots, i\}$, we have $\# \big( U_1 \cap V_1 \big) \leq 1$. Then,
\[
p+s = \# \big( U_1 \cap D_2 \big) + \# \big( V_1 \cap D_2 \big) \leq \# D_2 + \# \big( U_1 \cap V_1 \big) \leq b - \rho_2 + 1.
\]
We define the following maps.
\begin{align*}
&\text{$u: D_1 \cap \{0, \ldots, b-1-i\} \to \mathbb{Z}$ \hspace{4pt} by \hspace{4pt} $u(x) = x+i$,} \\
&\text{$v: D_1 \cap \{ b-1-i, \ldots, b-1\} \to \mathbb{Z}$ \hspace{4pt} by \hspace{4pt} $v(y) = y + i + 1 - b$}.
\end{align*}
Then the image of $u$ and $v$ are $U_1$ and $V_1$, respectively. Thus $\#D_1 \leq \#U_1 + \#V_1 \leq \#D_1 + 1$. In particular,
\begin{align*}
p+s \leq \#U_1 + \#V_1 \leq \#D_1 + 1 = b - \rho_1 + 1.
\end{align*}

Hence $p+s \leq b-m+1$. For the lower bound, note that for any $X, Y \subset \{0,1,\ldots,b-1\}$ we have $ \# \big( X \cap Y \big) \geq \#X + \#Y - b$. Then, using $\#U_1 + \#V_1 \geq \#D_1$,
\begin{align*}
p+s
&\geq \# \big( ( U_1 \cup V_1 ) \cap D_2 \big)
\geq \# \big( U_1 \cup V_1 \big) + \#D_2 - b \\
&\hspace{50pt} = \#U_1 + \#V_1 - \#\big(U_1 \cap V_1\big) + \#D_2 - b
\geq \#D_1 -1 + \#D_2 - b
\geq b - R - 1.
\end{align*}

We are left to prove the bound for $q+r$. Let $X_1 = \big( D_1 + i - b\big) \cap \{0, 1, \ldots, b-1\}$ and $Y_1 = \big( D_1 +i+1 \big) \cap \{0,1, \ldots, b-1 \}$. Then, $q = \# \big( X_1 \cap D_2 \big)$ and $r = \# \big( Y_1 \cap D_2 \big)$. Since $X_1 \subset \{0, \ldots, i-1\}$ and $Y_1 \subset \{i+1, \ldots, b-1\}$, we have $X_1 \cap Y_1 = \varnothing$. Then,
\[
q+r = \# \big( X_1 \cap D_2 \big) + \# \big( Y_1 \cap D_2 \big) \leq \# D_2 \leq b - \rho_2.
\]
As before, define
\begin{align*}
&\text{$\widetilde{u}: D_1 \cap \{ b-i, \ldots, b-1\} \to \mathbb{Z}$ \hspace{4pt} by \hspace{4pt} $\widetilde{u}(x) = x+i-b$,} \\
&\text{$\widetilde{v}: D_1 \cap \{0, \ldots, b-2-i\} \to \mathbb{Z}$ \hspace{4pt} by \hspace{4pt} $\widetilde{v}(y) = y + i + 1$}.
\end{align*}
The image of $\widetilde{u}$ and $\widetilde{v}$ are $X_1$ and $Y_1$, respectively. Then $\#D_1 -1 \leq \# X_1 + \# Y_1 \leq \#D_1$, and
\begin{align*}
q+r \leq \#X_1 + \#Y_1 \leq \#D_1 = b - \rho_1.
\end{align*}
Therefore $q+r \leq b-m$. For the lower bound,
\begin{align*}
q+r
&\geq \# \big( ( X_1 \cup Y_1 ) \cap D_2 \big)
\geq \# \big( X_1 \cup Y_1 \big) + \#D_2 - b \\
&\hspace{50pt} = \#X_1 + \#Y_1 - \#\big(X_1 \cap Y_1\big) + \#D_2 - b
\geq \#D_1 -1 + \#D_2 - b
\geq b - R - 1.
\end{align*}
\end{proof}

Now, let 
\[
\begin{pmatrix} \alpha & \beta \\ \gamma & \delta \end{pmatrix} := F(A_i) = \frac{1}{2}
\begin{pmatrix}
p-q-r+s & p+q-r-s \\
p-q+r-s & p+q+r+s
\end{pmatrix}.
\]
By Claim \ref{claim: bound on p+q etc} we obtain
\[
\frac{m-R-1}{2} \leq \alpha \leq \frac{R-m+2}{2}, \quad \frac{m-R}{2} \leq \beta \leq \frac{R-m}{2}, \quad \delta \geq b-R.
\]
Since $R = \rho_1 + \rho_2 \leq 2m$, we obtain
\[
-\frac{m+1}{2} \leq \alpha \leq \frac{m+2}{2}, \quad -\frac{m}{2} \leq \beta \leq \frac{m}{2}, \quad \delta \geq b-2m.
\]

Let $f_i = [F(A_i)] \in \mathrm{PGL}_2(\mathbb{R})$. Fix $0 < t < 1$. Since $\delta > 0$ and $|\gamma| < \delta$ by Lemma \ref{lemma: denominator is positive}, we have for any $x \in [-t,t]$
\[
| \gamma x + \delta | \geq \delta - |\gamma| |x| \geq \delta(1-t).
\]
Then, for any $x \in [-t,t]$,
\[
|f_i(x)| = \left| \frac{ \alpha x + \beta }{ \gamma x + \delta } \right| \leq \frac{ |\alpha| |x| + |\beta| }{ \delta(1-t) } \leq \frac{ (m+2)t + m }{ 2(b-2m)(1-t) }.
\]

Now, if
\[
\frac{(m+2)t+m}{2(b-2m)(1-t)}<t,
\]
then $f_i ([-t,t]) \subset (-t,t)$ for every $i \in I$, so $[-t,t]$ is a common strictly invariant arc. Let $t = \frac{1}{2}$, then this condition is
\[
\frac{ \frac{1}{2}(m+2) + m }{ b-2m } < \frac{1}{2},
\]
which is equivalent to $5m < b - 2$, and is satisfied by $m \leq \rho(b) = \lfloor \frac{b-3}{5} \rfloor$. Thus $[-\frac12, \frac12]$ is a common strictly invariant arc. Theorem \ref{thm: main theorem 3}
implies that the family $\{A_i\}_{i=0}^{b-1}$ is simultaneously conjugate to a family of positive matrices.
Hence the Lyapunov exponent is unchanged, and Theorem \ref{thm: main theorem 1} gives a series
representation for $\lambda$. Finally, applying Theorem \ref{theorem: KP main result} completes the proof.
\end{proof}

\section{Generalizations and Limitations} \label{section: generalizations and limitations}

The algebraic/telescoping part of the Kernel-expansion argument in Theorem \ref{thm: main theorem 1} extends to invertible positive $(d+1)\times(d+1)$ matrices, but the quantitative truncation estimates used in Theorem \ref{thm: main theorem 2} are currently specific to the $2 \times 2$ case.

Consider the $d$-simplex
\[
\Delta_d^{\circ}
=
\left\{ y=(y_1,\dots,y_{d+1})\in \mathbb{R}^{d+1} \setcond y_j > 0, \; \sum_{j=1}^{d+1} y_j = 1 \right\}.
\]
Let $D=(-1,1)^d$. We define the projective ratio chart $\Phi : \Delta_d^{\circ} \to D$ by
\[
\Phi(y)_j = \frac{y_j - y_{d+1}}{y_j + y_{d+1}}
\quad (1\leq j\leq d).
\]
Suppose $\{A_i\}_{i \in I}$ is a finite family of positive $(d+1) \times (d+1)$ matrices. Denote by $F_i: \Delta_d^{\circ} \to \Delta_d^{\circ}$ the projective action of $A_i$, and define $f_i: D \to D$ by $f_i = \Phi \circ F_i \circ \Phi^{-1}$. Then, as in the $2\times 2$ case, positivity gives a uniform strict invariance: there is $0<r<1$ such that $f_i(D) \subset [-r, r]^d$ for all $i \in I$.

We equip $\Delta_d^{\circ}$ with the Hilbert projective metric, defined by
\[
d_{\mathrm{H}}(y,z)
=
\log\!\left( \frac{\max_{1\leq p\leq d+1}(y_p/z_p)}{\min_{1\leq p\leq d+1}(y_p/z_p)} \right).
\]
Consider the following operator defined on $1$-Wasserstein space associated to $d_{\mathrm{H}}$.
\[
\mathscr{H}\nu = \sum_{i\in I} w_i (F_i)_*\nu.
\]
By the Birkhoff--Hopf theorem \cite[Theorem A.4.1]{Lemmens--Nussbaum}, each $F_i$ is a strict contraction for $d_{\mathrm H}$, and hence $\mathscr{H}$ is a strict contraction in the Wasserstein distance associated to $d_{\mathrm H}$. By the completeness of  the underlying space $(\Delta_d^{\circ}, d_{\mathrm{H}})$, there is a unique stationary measure $\nu^*$ on $\Delta_d^{\circ}$, and $\mathscr{H}^n\delta \to \nu^*$ for every Dirac measure $\delta$.

For $\alpha \in\mathbb{N}_0^d$ denote by $|\alpha|$ the sum of its entries. For $x \in \mathbb{R}^d$, let $x^{[\alpha]} = \prod_{j=1}^d {x_j}^{\alpha_j}$. We define $v(x\,;\,c) \in \mathbb{R}^{\mathbb{N}_0^d}$ by
\[
v_{\mathbf{0}}(x\,;\,c):=c,
\qquad
v_{\alpha}(x\,;\,c):= -\frac{(-x)^{[\alpha]}}{|\alpha|}
\quad (\alpha \ne \mathbf{0}).
\]
For each $i \in I$, define $\ell_i: D \to \mathbb{R}$ by
\[
\ell_i(x)=\log \big( \| A_i \Phi^{-1}(x) \|_1 \big).
\]
One can define the coefficients of the infinite matrix $T = (b_{\kappa,\beta})_{\kappa,\beta\in\mathbb{N}_0^d}$ so that, for every $x \in D$ and $c \in \mathbb{R}$,
\[
T v(x\,;\,c)
=
\sum_{i\in I} w_i
\big(
v(f_i(x)\, ;\, \ell_i(x)) - v(f_i(0)\, ;\, \ell_i(0))
\big).
\]
Namely, $b_{\kappa,\beta}$ are obtained by expanding the above expression component-wise in the monomial basis $\{x^{[\alpha]}\}_{\alpha \in \mathbb{N}_0^d}$. Once the above identity is available, the same telescoping argument as in the proof of Theorem \ref{thm: main theorem 1} gives a finite sum formula. That is, setting $\nu_n = \mathscr{H}^n \delta_{\Phi^{-1}(0)}$ we have
\[
\sum_{j=0}^{n-1} (T^j\boldsymbol{v})_\mathbf{0}
=
\sum_{i\in I} w_i \int_{\Delta_d^{\circ}} \log \| A_i y \|_1 \, d\nu_{n-1}(y).
\]
Therefore, using $\nu_n \to \nu^*$ and the Furstenberg--Kifer formula (analogous to equation \eqref{eq: integral form by Furstenberg Kifer}), we obtain the same series representation with $\boldsymbol{v} = \sum_{i \in I} w_i v(f_i(0)\, ;\, \ell_i(0))$.
\[
\lambda = \sum_{n=0}^{\infty} (T^n\boldsymbol{v})_\mathbf{0}.
\]

In the $2\times 2$ case, the essential ingredient in the proof of Theorem \ref{thm: main theorem 2} was Lemma \ref{lemma: real contraction implies complex contraction} together with one-variable holomorphic integral operators. In higher dimensions, this lemma is false in general: a contraction on the real cube $[-1,1]^d$ does not automatically yield a uniform contraction on the polydisk $\mathbb{D}_1^d$ (counterexamples exist). Because of this, the complex-analytic estimates used for the truncation error in the $2\times 2$ case do not presently extend.

\appendix

\section{Appendix: Proof of minor lemmas} \label{appendix: proof of minor lemmas}

\begin{proof}[Proof of Lemma \ref{lemma: Mobius classification}]
Choose a representative matrix $A = \begin{pmatrix} a & b \\ c & d \end{pmatrix} \in \mathrm{GL}_2(\mathbb{R})$ for $f$. First suppose $c=0$. Then $f(x) = \alpha x + \beta$ with $\alpha = \frac{a}{d}$, $\beta = \frac{b}{d}$. We have $\widehat{\infty}$ as its fixed point, and any fixed point on $\mathbb{R}$ must satisfy $(1- \alpha) x = \beta$.

If $\alpha = 1$ and $\beta=0$, then $f$ is the identity map, which is a contradiction.

If $\alpha = 1$ and $\beta \ne 0$, then there is no fixed point in $\mathbb{R}$, and $\widehat{\infty}$ is the unique fixed point for $f$. In this case $f(x) = x + \beta$, and we see that $f'(\widehat{\infty}) = 1$ and $f$ is parabolic.

If $\alpha \ne 1$, then $p = \frac{\beta}{1 - \alpha} \in \mathbb{R}$ is a fixed point. Direct calculation shows $f'(p) = \alpha$ and $f'(\widehat{\infty}) = \frac{1}{\alpha}$. Thus, $f$ is either an involution or hyperbolic. If it is an involution, $\alpha = -1$, and $f^2 = \mathrm{id}$.

Now suppose $c \ne 0$. Then $\widehat{\infty}$ is not a fixed point. The fixed point equation $f(x) = x$ is
\[
cx^2 + (d-a)x - b = 0.
\]
Let $\Delta = (d-a)^2 + 4bc$ be its discriminant. If $\Delta < 0$, then there are no fixed points in $\widehat{\mathbb{R}}$, and $f$ is elliptic. If $\Delta = 0$, then the fixed point of $f$ is unique and it is $q := \frac{a-d}{2c}$. Using $4bc = - (d-a)^2$, we compute
\[
f'(q) = \frac{ ad-bc }{ (cq + d)^2 } = \frac{ ad + \frac{(d-a)^2}{4} }{ \left( \frac{a-d}{2} + d \right)^2 } = 1.
\]
Thus, $f$ is parabolic.

If $\Delta > 0$, there are two distinct fixed points $p \ne q$ of $f$ in $\mathbb{R}$. Define $\phi \in \mathrm{PGL}_2(\mathbb{R})$ by
\[
\phi(x) = \frac{x-p}{x-q}
\]
and let $g = \phi \circ f \circ \phi^{-1}$. Then $g(0) = 0$ and $g(\widehat{\infty}) = \widehat{\infty}$. Thus $g(x) = \alpha x$ with some $\alpha \in \mathbb{R} \setminus \{0\}$. If $|\alpha| \ne 1$ then $g$ is hyperbolic, and so is $f$, since conjugacy preserves derivatives at fixed points. If $\alpha = 1$ then $g$ is identity and so is $f$. If $\alpha=-1$ then $g$, and thus $f$, is an involution. In this case, $g^2 = \mathrm{id}$ and $f^2 = (\phi^{-1} \circ g \circ \phi)^2 = \mathrm{id}$.
\end{proof}

\begin{proof}[Proof of Lemma \ref{lemma: bound for Kr}]
First, by Cauchy-Schwarz inequality,
\[
K(r)
\leq r \left( \frac{1}{2\pi} \int_0^{2\pi} \frac{1}{ {|1 - r e^{i\theta} |}^2} d\theta \right)^{\frac{1}{2}}
= r\left( \frac{1}{2\pi} \int_0^{2\pi} \sum_{n,m \in \mathbb{N}_0} r^{n+m}e^{i(n-m)\theta} d\theta \right)^{\frac{1}{2}}
=\frac{r}{\sqrt{1-r^2}}.
\]

Next, letting $k = \frac{2 \sqrt{r} }{ 1+r }$, we have
\[
| 1 - r e^{i\theta} | = \sqrt{ (1+r)^2 - 4r \cos^2{\frac{\theta}{2}} } = (1+r) \sqrt{ 1 - k^2 \cos^2\frac{\theta}{2} }.
\]
Letting $t = \theta/2$ and using symmetry for $t \in [0, \pi/2]$ and $t \in [\pi/2, \pi]$,
\[
K(r) = \frac{r}{\pi(1+r)} \int_0^\pi \frac{1}{ \sqrt{ 1 - k^2 \cos^2 t }} dt
= \frac{2r}{\pi(1+r)} \int_0^\frac{\pi}{2} \frac{1}{ \sqrt{ 1 - k^2 \sin^2 t }} dt.
\]

Define the complete elliptic integral of the first kind $\mathscr{K}(k)$ and the second kind $\mathscr{E}(k)$.
\[
\mathscr{K}(x) = \int_0^\frac{\pi}{2} \frac{1}{ \sqrt{ 1 - x^2 \sin^2 t }} dt, \quad \mathscr{E}(x) = \int_0^\frac{\pi}{2} \sqrt{ 1 - x^2 \sin^2 t } dt.
\]
Then we have
\[
\frac{d}{dx} \mathscr{K}(x)
= \frac{ \mathscr{E}(x) - (1-x^2)\mathscr{K}(x) }{ x(1-x^2) }
= \frac{x}{1-x^2} \int_0^{\frac{\pi}{2}} \frac{ \cos^2 t }{ \sqrt{ 1 - x^2 \sin^2 t } } dt.
\]

Now, let
\[
F(x) = \frac{\pi}{2} + \frac{1}{2} \log \frac{1}{1-x^2} - \mathscr{K}(x).
\]
Then $F(0) = 0$. Since
\[
0 \leq \int_0^{\frac{\pi}{2}} \frac{ \cos^2 t }{ \sqrt{ 1 - x^2 \sin^2 t } } dt \leq \int_0^{\frac{\pi}{2}} \cos t dt = 1,
\]
we have $F'(x) \geq 0$ for $x \in (0,1)$. Thus, $F(k) \geq 0$, $\mathscr{K}(k) \leq \frac{\pi}{2} + \frac{1}{2} \log \frac{1}{1-k^2}$, and
\[
K(r) \leq \frac{2r}{\pi(1+r)} \left( \frac{\pi}{2} + \log \frac{1+r}{1-r} \right).
\]
\end{proof}

\section{Appendix: Comparison with the transfer-operator method} \label{appendix: comparison}

Let us revisit Example \ref{example: middle fifth}. We recall that the matrices $A_0, \ldots, A_4$ are, respectively,
\[
\begin{pmatrix}
4 & 0\\
2 & 1
\end{pmatrix}
, \hspace{2pt}
\begin{pmatrix}
2 & 1\\
1 & 2
\end{pmatrix}
, \hspace{2pt}
\begin{pmatrix}
1 & 2\\
2 & 1
\end{pmatrix}
, \hspace{2pt}
\begin{pmatrix}
2 & 1\\
1 & 2
\end{pmatrix}
, \hspace{2pt}
\begin{pmatrix}
1 & 2\\
0 & 4
\end{pmatrix}.
\]
Consider the uniform weight $w_i = \frac15$. We conjugate $\{A_i\}_{i = 0}^4$ as $M_i = P A_i P^{-1}$, where $P$ is exactly (not approximately) the matrix
\[
P =
\begin{pmatrix}
-0.261646226625829 & 1.389794351490291 \\
1.389802378509709 & -0.261652943374171
\end{pmatrix}.
\]
Then $\{M_i\}_{i = 0}^4$ are positive matrices. Table \ref{tab: epsilon vs N and M} shows that both truncation parameters $N$ and $M$ grow approximately linearly in $\log{ \left( 1/\vep \right) }$. The cost of computing the approximation is $O(M^3)+O(NM^2)$, thus polynomial in $\log(1/\vep)$. We may take $r = 0.428573$ in Theorem \ref{thm: precise bounds}.
\begin{table}[ht]
\centering
\begin{tabular}{c c c c c c}
\hline
Tolerated error $\vep$ & $10^{-5}$ & $10^{-10}$ & $10^{-20}$ & $10^{-40}$ & $10^{-80}$ \\
\hline
$N$ & $12$ & $25$ & $52$ & $107$ & $216$ \\
$M$ & $18$ & $34$ & $63$ & $119$ & $229$ \\
\hline
\end{tabular}
\caption{Parameters $(N,M)$ that certify $\left| \lambda - \sum_{n=0}^{N-1} \big( {T_M}^n \boldsymbol{v}^{(M)} \big)_0 \right| < \vep$.}
\label{tab: epsilon vs N and M}
\end{table}

Table \ref{tab: transfer-operator method} shows the transfer-operator approximations $\lambda_K$ and the corresponding Jurga–Morris \cite{Jurga-Morris} error bounds, where the row $K = \infty$ records the true value of $\lambda$, computed and rigorously certified using Theorem \ref{thm: main theorem 1} and Theorem \ref{thm: precise bounds}. They prove that $|\lambda - \lambda_K| = O( e^{-\gamma K^2} )$ with some $\gamma>0$. However, computing $\lambda_K$ requires evaluating $M_{i_K}\cdots M_{i_1}$ for all words $(i_1, \ldots, i_K) \in I^K$, so the cost is subexponential in $\log(1/\vep)$, namely $O ( \exp{ ( \alpha \sqrt{ \log{ \left( 1/ \vep \right) } } ) } )$ for some $\alpha > 0$.

\begin{table}[ht]
\centering
\begin{tabular}{rll}
\hline
$K$ & $\hspace{120pt} \lambda_K$ & bound on $|\lambda-\lambda_K|$ \\
\hline
3  & 1.159525392769743420608204782031576085540756343799 & 1034.327304 \\
4  & 1.159358366302602610321044788558506600187995719489 & 39.66674529 \\
5  & 1.159357954152805182779722298687073824161644232685 & 0.6973596784 \\
6  & 1.159357955327560750419629229785989587388707340738  & 0.005560121064 \\
7  & 1.159357955327188981202426160057682124538080194041  & 1.866220476e-5 \\
8  & 1.159357955327188283034636146654022580030193305431 & 2.643863389e-8 \\
9  & 1.159357955327188283472114507235734221510301139499 & 1.585728710e-11 \\
10 & 1.159357955327188283472158427231508097861017531367 & 4.035648502e-15 \\
11 & 1.159357955327188283472158428142889843227903382227 & 4.365487917e-19 \\
\hline \\[-15pt]
$\infty$ & $1.159357955327188283472158428142891438639948104124\cdots$ &  \\
\hline
\end{tabular}
\caption{Approximations $\lambda_K$ and upper bounds on $|\lambda-\lambda_K|$ using \cite{Jurga-Morris}, where the row $K=\infty$ is the true value of $\lambda$ calculated with our method.}
\label{tab: transfer-operator method}
\end{table}


\section{Appendix: Application to fractals - census data} \label{appendix: fractals census}

We posed Question \ref{question: classification for the intersection of translated cantor sets} about the Hausdorff dimension of the intersection of randomly translated Cantor sets. We performed an exhaustive census for bases $4\le b\le 16$. We enumerated all ordered pairs $(D_1,D_2)$ of nonempty proper digit sets $D_j\subset\{0,1,\dots,b-1\}$. For each $b$, we counted degenerate pairs (abbreviated degen.). For non-degenerate pairs, we checked whether the associated matrix family had generalized heteroclinic connections of depth $2$ (abbreviated g.h.c.). If a scalar multiple of the identity matrix existed among the matrix family, we excluded it. This is justified since this occurrence can be isolated when computing the Lyapunov exponent.

\begin{table}[ht]
\centering
\caption{Full census under Lebesgue measure $\mu$ for $4\le b\le 16$.
The ``/ all pairs (\%)'' column is the percentage of degenerate pairs among all pairs.
The final column ``/ non-degen.\ (\%)'' is the percentage of pairs without generalized heteroclinic connections of depth $2$ (no g.h.c.)
among non-degenerate pairs.}
\label{tab:census-full}
\begin{tabular}{r r r r r r}
\hline
\(b\) &
all pairs &
degen.\ &
/ all pairs (\%) &
no g.h.c.\ &
/ non-degen.\ (\%) \\
\hline
4 & 196 & 190 & 96.94 & 0 & 0.00 \\
5 & 900 & 882 & 98.00 & 7 & 38.89 \\
6 & 3\,844 & 3\,778 & 98.28 & 30 & 45.45 \\
7 & 15\,876 & 15\,627 & 98.43 & 120 & 48.19 \\
8 & 64\,516 & 63\,643 & 98.65 & 491 & 56.24 \\
9 & 260\,100 & 256\,983 & 98.80 & 1\,968 & 63.14 \\
10 & 1\,044\,484 & 1\,032\,891 & 98.89 & 7\,678 & 66.23 \\
11 & 4\,186\,116 & 4\,143\,913 & 98.99 & 30\,079 & 71.27 \\
12 & 16\,760\,836 & 16\,604\,478 & 99.07 & 115\,730 & 74.02 \\
13 & 67\,076\,100 & 66\,496\,554 & 99.14 & 446\,385 & 77.02 \\
14 & 268\,369\,924 & 266\,203\,168 & 99.19 & 1\,726\,833 & 79.70 \\
15 & 1\,073\,610\,756 & 1\,065\,513\,022 & 99.25 & 6\,657\,498 & 82.21 \\
16 & 4\,294\,705\,156 & 4\,263\,997\,864 & 99.28 & 25\,840\,358 & 84.15 \\
\hline
\end{tabular}
\end{table}

\section*{Acknowledgements}
I would like to express my sincere gratitude to my supervisor, Masaki Tsukamoto, for his guidance and support. I am also grateful to Shintaro Suzuki for suggesting the application to random Fibonacci sequences.

I further wish to thank the wonderful people around me for their support, as well as the academic community.

\vspace{0.5cm}

\address{
Department of Mathematics, Kyoto University, Kyoto 606-8501, Japan}

\textit{E-mail}: \texttt{alibabaei.nima.28c@st.kyoto-u.ac.jp}

This work was supported by JSPS KAKENHI Grant Number 25KJ1473.

\end{document}